\numberwithin{equation}{section}
\begin{document}

\baselineskip 16.1pt \hfuzz=6pt

\theoremstyle{plain}
\newtheorem{theorem}{Theorem}[section]
\newtheorem{prop}[theorem]{Proposition}
\newtheorem{lemma}[theorem]{Lemma}
\newtheorem{corollary}[theorem]{Corollary}
\newtheorem{example}[theorem]{Example}
\newtheorem*{thmA}{Theorem A}
\newtheorem*{thmB}{Theorem B}
\newtheorem*{thmC}{Theorem C}
\newtheorem*{defnA}{Definition A}
\newtheorem*{defnB}{Definition B}
\newtheorem*{defnC}{Definition C}
\newtheorem*{defnD}{Definition D}
\newtheorem*{defnE}{Definition E}

\theoremstyle{definition}
\newtheorem{definition}[theorem]{Definition}
\newtheorem{remark}[theorem]{Remark}

\renewcommand{\theequation}
{\thesection.\arabic{equation}}

\allowdisplaybreaks

\newcommand{\XX}{X}
\newcommand{\GG}{\mathop G \limits^{    \circ}}
\newcommand{\GGs}{{\mathop G\limits^{\circ}}}
\newcommand{\GGtheta}{{\mathop G\limits^{\circ}}_{\theta}}
\newcommand{\xoneandxtwo}{X_1\times\mathcal X_2}

\newcommand{\GGp}{{\mathop G\limits^{\circ}}}

\newcommand{\GGpp}{{\mathop G\limits^{\circ}}_{\theta_1,\theta_2}}

\newcommand{\e}{\varepsilon}
\newcommand{\bmo}{{\rm BMO}}
\newcommand{\vmo}{{\rm VMO}}
\newcommand{\cmo}{{\rm CMO}}
\newcommand{\Z}{\mathbb{Z}}
\newcommand{\N}{\mathbb{N}}
\newcommand{\R}{\mathbb{R}}
\newcommand{\C}{\mathbb{C}}

\pagestyle{myheadings}\markboth{\rm\small Yanchang Han, Yongsheng Han and  Ji Li
}{\rm\small Boundedness of singular integrals and Hardy space}

\title[Boundedness of singular integrals and Hardy space]
{Criterion of the boundedness of singular integrals on spaces of homogeneous type}

\author{Yanchang Han, Yongsheng Han and  Ji Li}

\thanks{The first author is supported by  NNSF of China (Grant No. 11471338) and
Guangdong province natural science foundation of China (Grant No. 2014A030313417); The third author is supported by the
Australian Research Council under Grant No.~ARC-DP120100399}

\subjclass[2010]{Primary 42B35; Secondary 43A85, 42B25, 42B30}

\keywords{Spaces of homogeneous type, Orthonormal wavelet basis,
Hardy space, Singular integrals, Carleson measure space, BMO, Campanato space, Duality.}

\begin{abstract}
It was well known that geometric considerations enter in a decisive way in many questions of harmonic analysis. The main purpose of this paper is to provide the criterion of the boundedness for singular integrals on the Hardy spaces and as well as on its dual, particularly on $\bmo$ for spaces of homogeneous type $(X, d,\mu)$ in the sense of Coifman and Weiss, where the quasi-metric $d$ may have no regularity and the measure $\mu$ satisfies only the doubling property. We make no additional geometric assumptions on the quasi-metric or the doubling measure and thus, the results of this paper extend to the full generality of all related previous ones, in which the extra geometric assumptions were made on both the quasi-metric $d$ and the measure $\mu.$ To achieve our goal, we prove that the atomic Hardy spaces introduced by Coifman and Weiss coincide with the Hardy spaces defined in terms of wavelet coefficients and develop the molecule theory for this general setting. The main tools used in this paper are atomic decomposition, the orthonormal wavelet basis constructed recently by Auscher and Hyt\"onen, the discrete Calder\'on-type reproducing formula, the almost orthogonal estimates, implement various stopping time arguments and the duality of the Hardy spaces with the Carleson measure spaces.
\end{abstract}
\maketitle

\tableofcontents

\section{Introduction}\label{sec:introduction}
\setcounter{equation}{0}
The classical theory of Calder\'on--Zygmund singular integral operators as well as the theory of
function spaces were based on extensive use of convolution operators and on the Fourier transform.
However, it is now possible to extend most of those ideas and results to spaces of homogeneous type.
As Meyer remarked in his
preface to~\cite{DH}, \emph{``One is amazed by the dramatic
changes that occurred in analysis during the twentieth century.
In the 1930s complex methods and Fourier series played a
seminal role. After many improvements, mostly achieved by the
Calder\'on--Zygmund school, the action takes place today on
spaces of homogeneous type. No group structure is available,
the Fourier transform is missing, but a version of harmonic
analysis is still present. Indeed the geometry is conducting
the analysis.''}

Spaces of homogeneous type were introduced by
Coifman and Weiss in the early 1970s, in~\cite{CW1}. We say
that $(X,d,\mu)$ is {\it a space of homogeneous type} in the
sense of Coifman and Weiss if $d$ is a quasi-metric on~$X$
and $\mu$ is a nonzero measure satisfying the doubling
condition. A \emph{quasi-metric}~$d$ on a set~$X$ is a
function $d:X\times X\longrightarrow[0,\infty)$ satisfying
(i) $d(x,y) = d(y,x) \geq 0$ for all $x,y\in X$; (ii)
$d(x,y) = 0$ if and only if $x = y$; and (iii) the
\emph{quasi-triangle inequality}: there is a constant $A_0\in
[1,\infty)$ such that for all $x$, $y$, $z\in X$,
\begin{eqnarray}\label{eqn:quasitriangleineq}
    d(x,y)
    \leq A_0 [d(x,z) + d(z,y)].
\end{eqnarray}
We define the quasi-metric ball by $B(x,r) := \{y\in X: d(x,y)
< r\}$ for $x\in X$ and $r > 0$. Note that the quasi-metric, in
contrast to a metric, may not be H\"older regular and
quasi-metric balls may not be open.
We say that a nonzero measure $\mu$ satisfies the
\emph{doubling condition} if there is a constant $C_\mu$ such
that for all $x\in\XX$ and $r > 0$,
\begin{eqnarray}\label{doubling condition}
  \mu(B(x,2r))
   \leq C_\mu \mu(B(x,r))
   < \infty.
\end{eqnarray}
We point out that the doubling condition (\ref{doubling
condition}) implies that there exist positive constants
$\omega$ (the \emph{upper dimension} of~$\mu$) and $C_\omega$ such
that for all $x\in X$, $\lambda\geq 1$ and $r > 0$,
\begin{eqnarray}\label{upper dimension}
    \mu(B(x, \lambda r))
    \leq C_\omega\lambda^{\omega} \mu(B(x,r)).
\end{eqnarray}
Spaces of homogeneous type include many special spaces in
analysis and have many applications in the theory of singular
integrals and function spaces; See~\cite{Chr, CW2, NS1, NS2} for more
details. Coifman and Weiss in \cite{CW2} introduced the atomic
Hardy space $H^p_{cw}(X)$ on $(X,d,\mu).$ To recall the atomic Hardy space,
one first needs the Campanato spaces $\mathcal{C}_\alpha(X)$, $\alpha \geq 0$, consisting of those
functions for which
\begin{eqnarray}\label{Lip space}\left\lbrace \frac{1}{\mu(B)}\int_B|f(x)-f_B|^2 d\mu(x)\right\rbrace^{\frac{1}{2}}
\leq C[\mu(B)]^\alpha,\end{eqnarray}
where $B$ are any quasi-metric balls, $f_B=\frac{1}{\mu(B)}\int_B f(x)d\mu(x)$ and $C$ is independent of $B.$ Let
$||f||_{{\mathcal C}_{\alpha}(X)}$ be the infimum of all $C$ for which (\ref{Lip space}) holds.
The atomic Hardy space $H^p_{cw}(X)$ introduced by Coifman and Weiss is defined to be the subspace of
the dual of $\mathcal{C}_\alpha(X)$, where $\alpha =\frac{1}{p} - 1, 0<p\leq 1$,
consisting of those linear functional
admitting an \textit{atomic decomposition}
\begin{equation}\label{atomic decomposition}
f=\sum_{j=1}^\infty \lambda_j a_j,
\end{equation}
where the $a_j^,$s are $(p, 2)$-atoms, $\sum_{j=1}^\infty |\lambda_j|^p<\infty$ and the series in (\ref{atomic decomposition}) converges in
the dual of $\mathcal{C}_\alpha(X)$, where $\alpha =\frac{1}{p} - 1, 0<p\leq 1$.

Here a function
$a(x)$ is an $(p, 2)$-atom if

(i) the support of $a(x)$ is contained in a ball $B(x_0,r)$ for $r>0$ and $x_0\in X$;

(ii)$\|a(x)\|_{L^2(X)} \leq \{\mu(B(x_0,r))\}^{\frac{1}{2}-\frac{1}{p}}$;

(iii) $\int_X a(x)d\mu(x) = 0$.

The quasi-norm of $f$ in $H^p_{cw}(X)$ is defined by
$\|f\|_{H^p_{cw}(X)}=\inf\lbrace \lbrace \sum_{j=1}^\infty |\lambda_j|^p\rbrace^{\frac{1}{p}}\rbrace,$
where the infimum is taken over all such atomic representations of $ f.$

The atomic Hardy spaces have many applications. For example, if an
operator $T$ is bounded on $L^2(X)$ and from $H^p_{cw}(X)$ to $L^p(X)$ for
some $p\leq 1,$ then $T$ is bounded on $L^q(X)$ for $1<q\leq 2.$ See \cite {CW2} for more applications.

Even though spaces of homogeneous have many applications,
however, for some applications, additional geometric assumptions were
required on the quasi-metric $d$ and the measure $\mu.$ This is because, as
mentioned, the original quasi-metric~$d$ may have no
regularity and quasi-metric balls, even Borel sets, may not be
open. For instance, to establish the maximal function
characterization for the Hardy space on spaces of homogeneous type, Mac\'ias and
Segovia in~\cite{MS1} replaced the quasi-metric~$d$ by another quasi-metric $d'$
on~$X$ such that the topologies induced on~$X$ by $d$
and~$d'$ coincide, and $d'$ has the following regularity
property:
\begin{eqnarray}\label{smooth metric}
    |d'(x,y) - d'(x',y)|
    \le C_0 \, d'(x,x')^\theta \,
        [d'(x,y) + d'(x',y)]^{1 - \theta}
\end{eqnarray}
for some constant~$C_0,$ some regularity exponent
$\theta\in(0,1)$, and for all $x$, $x'$, $y\in X$. Moreover, if
quasi-metric balls are defined by this new quasi-metric $d'$,
that is, $B'(x,r) := \{y\in X: d'(x,y) < r\}$ for $r > 0$, then
the measure $\mu$ satisfies the following property:
\begin{eqnarray}\label{regular}
    \mu(B'(x,r))\sim r.
\end{eqnarray}
Note that property~\eqref{regular} is much stronger than the
doubling condition. Mac\'{i}as and Segovia \cite{MS1} first introduced test function and
distribution spaces based on
the conditions \eqref{smooth metric} and ~\eqref{regular}, and then established the
maximal function characterization for Hardy spaces $H^p_{max}(X)$
with $(1 + \theta)^{-1} < p \leq 1$, on spaces of homogeneous
type~$(X,d',\mu)$ that satisfy the regularity
condition~\eqref{smooth metric} on the quasi-metric~$d'$ and
property~\eqref{regular} on the measure~$\mu$. The most remarkable work on $(X,d',\mu)$ is
the $Tb$ theorem of David, Journ\'e and Semmes \cite{DJS}.
See also \cite{DH} and \cite{HS} for the Littlewood--Paley square function characterization
of the Hardy, Besove and Triebel--Lizorkin spaces on such spaces $(X,d',\mu)$.

This theme has now been developed systematically by a number of people. In \cite{NS1}, Nagel and Stein developed the product $L^p$ $(1
< p < \infty)$ theory in the setting of the
Carnot--Carath\'eodory spaces formed by vector fields
satisfying H\"{o}rmander's finite rank condition. The
Carnot--Carath\'eodory spaces studied in~\cite{NS1} are spaces
of homogeneous type with a smooth quasi-metric $d$ and a
measure~$\mu$ satisfying the conditions $\mu(B(x, sr)) \sim
s^{m+2} \mu(B(x,r))$ for $s\geq 1$ and $\mu(B(x, sr)) \sim
s^4\mu(B(x,r))$ for $s\leq 1.$ These conditions on the measure
are weaker than property~ in (\ref{regular}) but are still stronger
than the original doubling condition. In~\cite{HMY}, motivated by the work of Nagel and
Stein, Hardy spaces, namely the atomic and the Littlewood-Paley
square function characterizations, were developed on spaces of homogeneous
type $(X, d, \mu)$ with the quasi-metric $d$ satisfies the regular property in (\ref{smooth metric}) and
the measure $\mu$ satisfies the
above conditions which are stronger than the doubling property in (\ref{doubling
condition}).

More recently, Auscher and Hyt\"onen~\cite{AH} constructed an orthonormal
wavelet basis with H\"older regularity and exponential decay
for spaces of homogeneous type in the sense of Coifman and Weiss. This result is
remarkable since there are no additional geometric assumptions other than
those defining spaces of homogeneous type. To be precise, Auscher and Hyt\"onen proved the following

\begin{thmA}[\cite{AH} Theorem 7.1]\label{theorem AH orth basis}
    Let $(X,d,\mu)$ be a space of homogeneous type in the sense of Coifman and Weiss with
    quasi-triangle constant $A_0.$ There exists an orthonormal wavelet basis
    $\{\psi_\alpha^k\}$, $k\in\mathbb{Z}$, $x_\alpha^k\in
    \mathscr{Y}^k$, of $L^2(X)$, having exponential decay
    \begin{eqnarray}\label{exponential decay}
        |\psi_\alpha^k(x)|
        \leq {C\over \sqrt{\mu(B(x_\alpha^k,\delta^k))}}
            \exp\Big(-\nu\Big( {d(x^k_\alpha,x)\over\delta^k}\Big)^a\Big),
    \end{eqnarray}
    H\"older regularity
    \begin{eqnarray}\label{Holder-regularity}
        |\psi_\alpha^k(x)-\psi_\alpha^k(y)|
        \leq \frac{C}{\sqrt{\mu(B(x_\alpha^k,\delta^k))}}
            \Big( \frac{d(x,y)}{\delta^k}\Big)^\eta
            \exp\Big(-\nu\Big( \frac{d(x^k_\alpha,x)}{\delta^k}\Big)^a\Big)
    \end{eqnarray}
    for $d(x,y)\leq \delta^k$, and the cancellation property
    \begin{eqnarray}\label{cancellation}
        \int_X \psi_\alpha^k(x)\,d\mu(x) = 0,
        \qquad \text{for }k\in\mathbb{Z}.
    \end{eqnarray}
Moreover, the wavelet expansion is given by
\begin{eqnarray}\label{eqn:AH_reproducing formula}
    f(x)
    = \sum_{k\in\mathbb{Z}}\sum_{\alpha \in \mathscr{Y}^k}
        \langle f,\psi_{\alpha}^k \rangle \psi_{\alpha}^k(x)
\end{eqnarray}
in the sense of $L^2(X)$.
\end{thmA}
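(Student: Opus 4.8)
The plan is to transplant the classical construction of regular wavelets — build a multiresolution ladder of closed subspaces, pass to orthogonal complements, and select a localized orthonormal basis of each complement — with the Euclidean translation/dilation structure replaced throughout by a system of dyadic cubes. The first ingredient is therefore a system of dyadic cubes on $(X,d,\mu)$ in the sense of Christ and of Hyt\"onen--Kairema: for a parameter $\delta=\delta(A_0,C_\mu)\in(0,1)$ taken small enough, one obtains cubes $Q_\alpha^k$ ($k\in\Z$) that are nested across scales, partition $X$ at each fixed scale, satisfy $B(x_\alpha^k,c_0\delta^k)\subseteq Q_\alpha^k\subseteq B(x_\alpha^k,C_0\delta^k)$ about reference points $x_\alpha^k$, and — crucially — enjoy the \emph{small boundary property} $\mu\bigl(\{x\in Q_\alpha^k:\ d(x,X\setminus Q_\alpha^k)\le t\,\delta^k\}\bigr)\lesssim t^{\kappa}\mu(Q_\alpha^k)$ for some $\kappa>0$. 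Within each parent cube at scale $k-1$ exactly one child shares the parent's reference point and will carry a "father" function; the reference points of the remaining children at scale $k$ form $\mathscr{Y}^k$ and will index the wavelets, which is precisely the bookkeeping behind the indexing in \eqref{eqn:AH_reproducing formula}.

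Next I would manufacture smooth building blocks adapted to this geometry. The small boundary property lets one replace the sharp indicators of the $Q_\alpha^k$ by a H\"older-continuous partition of unity $\{\eta_\alpha^k\}$, $\sum_\alpha\eta_\alpha^k\equiv 1$, with each $\eta_\alpha^k$ localized at scale $\delta^k$ near $x_\alpha^k$; composing with a rapidly decaying averaging kernel (or iterating the construction) upgrades the localization to the exponential tails $\exp(-\nu(d(x_\alpha^k,x)/\delta^k)^a)$ appearing in \eqref{exponential decay}--\eqref{Holder-regularity}, with the $L^2$ normalization $\mu(B(x_\alpha^k,\delta^k))^{-1/2}$ and all size estimates governed by the doubling bound \eqref{upper dimension}. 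From these one assembles a smooth approximation of the identity $\{S_k\}$ whose kernel $s_k(x,y)$ is H\"older regular, exponentially localized at scale $\delta^k$, and \emph{bistochastic} ($\int s_k(x,y)\,d\mu(y)=\int s_k(x,y)\,d\mu(x)=1$), the bistochasticity produced by a symmetrization argument going back to Coifman that preserves regularity and decay; then $S_k\to I$ and $S_k\to 0$ strongly on $L^2(X)$ as $k\to\pm\infty$. Setting $D_k:=S_k-S_{k-1}$ yields continuous Littlewood--Paley pieces with H\"older regularity, exponential decay at scale $\delta^k$, the two-sided cancellation $\int D_k(x,y)\,d\mu(y)=\int D_k(x,y)\,d\mu(x)=0$, uniform $L^2$ bounds, almost-orthogonality $\|D_jD_k\|\lesssim\delta^{\varepsilon|j-k|}$, and $\sum_k D_k=I$ on $L^2(X)$.

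The decisive step is to pass from this over-complete continuous family to a genuine orthonormal basis with the correct cardinality — discretizing the $D_k$ at the dyadic points produces a frame, and forcing a basis is exactly what requires a multiresolution structure. The natural route is to build a smooth ladder $V_{k-1}\subseteq V_k$ with $\overline{\bigcup_k V_k}=L^2(X)$ and $\bigcap_k V_k=\{0\}$, where $V_k$ is generated (locally) by H\"older, exponentially localized, \emph{refinable} father functions $\{\theta_\alpha^k\}$ obeying a two-scale relation $\theta_\alpha^{k-1}=\sum_\beta H^k_{\alpha\beta}\,\theta_\beta^k$ together with the local partition-of-unity normalization $\sum_\alpha\theta_\alpha^k\equiv 1$. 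Constructing such refinable, regular, exponentially localized father functions on a space carrying \emph{no} regularity beyond doubling and small boundaries is the main obstacle: one must run a cascade/iteration scheme in which the rough dyadic refinement is repeatedly smoothed, the defect controlled by the small boundary property and the almost-orthogonal estimates, and a fixed point extracted; this is where the absence of any group or differentiable structure bites hardest. Granting it, $W_k:=V_k\ominus V_{k-1}$ decomposes $L^2(X)=\bigoplus_{k\in\Z}W_k$, and since within each parent cube $\#(\text{children})-1$ equals the number of associated points of $\mathscr{Y}^k$, each $W_k$ is spanned by a localized family indexed by $\mathscr{Y}^k$; orthonormalizing that family \emph{within the single scale} $k$ via the inverse square root of its Gram operator — whose kernel is, thanks to exponential decay, a summably small off-diagonal perturbation of the identity, so that $G^{-1/2}$ again has a H\"older, exponentially decaying kernel by a Schur/Jaffard-type argument in the algebra of such kernels — produces the wavelets $\psi_\alpha^k$.

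Finally one checks that the three properties in Theorem A fall out of this structure. Estimates \eqref{exponential decay} and \eqref{Holder-regularity} are inherited from the father functions and preserved by the scale-wise orthonormalization (which combines finitely many, exponentially-weighted, localized profiles). The cancellation \eqref{cancellation} holds because $\psi_\alpha^k\in W_k\perp V_{k-1}$ and $\sum_\alpha\theta_\alpha^{k-1}\equiv 1$ locally, so $\psi_\alpha^k$ is orthogonal to the (locally) constant functions, whence $\int_X\psi_\alpha^k\,d\mu=0$. And the expansion \eqref{eqn:AH_reproducing formula} is simply the orthogonal decomposition $L^2(X)=\bigoplus_k W_k$ with $\{\psi_\alpha^k\}_{x_\alpha^k\in\mathscr{Y}^k}$ an orthonormal basis of each $W_k$. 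The routine but lengthy parts are the almost-orthogonality and kernel-algebra estimates; the genuinely hard part, as noted, is the construction of the refinable smooth father functions in this bare generality.
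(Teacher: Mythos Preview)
This statement is Theorem~A, which the paper \emph{quotes} from Auscher and Hyt\"onen~\cite{AH} rather than proves; immediately after the statement the paper writes ``See \cite{AH} for more notations and details of the proof,'' and no argument is supplied. There is therefore no proof in this paper to compare your proposal against.

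For what it is worth, your outline is broadly in the spirit of the actual Auscher--Hyt\"onen construction: dyadic cubes with small boundaries, a smooth multiresolution ladder, and a scale-wise orthonormalization preserving localization. The one point where your sketch diverges from what they actually do is the production of the H\"older-regular, exponentially localized, refinable father functions. You propose a cascade/fixed-point scheme; Auscher and Hyt\"onen instead obtain regularity by \emph{averaging over a random family of dyadic lattices} (the Hyt\"onen--Kairema construction), which is what converts rough indicators into H\"older ``splines'' with exponential tails and simultaneously yields refinability. That randomization is the key technical device, and your proposal does not mention it; a deterministic cascade on a single dyadic system is not known to produce the required regularity in this generality. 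So while the architecture of your argument is right, the crucial smoothing mechanism is different from the one that actually works in~\cite{AH}.
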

Here $\delta$ is a fixed small parameter, say $\delta \leq
10^{-3} A_0^{-10}$, $a= (1+2\log_2A_0)^{-1},$ and $C < \infty$, $\nu > 0$ and
$\eta\in(0,1]$ are constants independent of $k$, $\alpha$, $x$
and~$x_\alpha^k$. See \cite{AH} for more notations and details of the proof.

Auscher and Hyt\"onen's orthonormal wavelet bases open the door
for developing wavelet analysis on spaces of homogeneous type. For example, applying
orthonormal wavelet bases, Auscher and Hyt\"onen \cite{AH} proved the $T(1)$ theorem on spaces of homogeneous type
in the sense of Coifman and Weiss. Motivated by Auscher and
Hyt\"onen's work, in \cite{HLW} the Hardy space theory was developed
on space of homogeneous type in the sense of
Coifman and Weiss. More precisely, let $(X,d,\mu)$ be space of homogeneous type in the sense of Coifman and Weiss with $\mu(X)=\infty.$ They first introduce the test function and distribution space as follows.

\begin{defnA}[\cite{HLW}]\label{def-of-test-func-space}
    \textup{(Test functions)} Fix $x_0\in X$, $r > 0$,
    $\beta\in(0,\eta]$ where $\eta$ is the regularity exponent
    from Theorem A and $\gamma
    > 0$. A function $f$ defined on~$X$ is said to be a {\it test
    function of type $(x_0,r,\beta,\gamma)$ centered at $x_0\in
    X$} if $f$ satisfies the following three conditions.
    \begin{enumerate}
        \item[(i)] \textup{(Size condition)} For all $x\in
            X$,
            \[
                |f(x)|
                \leq C \,\frac{1}{V_{r}(x_0) + V(x,x_0)}
                \Big(\frac{r}{r + d(x,x_0)}\Big)^{\gamma}.
            \]

        \item[(ii)] \textup{(H\"older regularity
            condition)} For all $x$, $y\in X$ with $d(x,y)
            < (2A_0)^{-1}(r + d(x,x_0))$,
            \[
                |f(x) - f(y)|
                \leq C \Big(\frac{d(x,y)}{r + d(x,x_0)}\Big)^{\beta}
                \frac{1}{V_{r}(x_0) + V(x,x_0)} \,
                \Big(\frac{r}{r + d(x,x_0)}\Big)^{\gamma}.
            \]

        \item[(iii)] \textup{(Cancellation condition)}
            \[
                \int_X f(x) \, d\mu(x)
                = 0,
            \]
    \end{enumerate}
where $V_r(x_0)=\mu(B(x_0,r))$ and $V(x_0,x)=\mu(B(x_0, d(x,x_0))).$
\end{defnA}
Note that, as proved in \cite{HLW}, $\psi_\alpha^k(x)\over \sqrt{\mu(B(x_\alpha^k,\delta^k))}$ is a test function with $x_0=x_\alpha^k, r=\delta^k, \beta=\eta$ and any $\gamma>0.$ The test function space is denoted by $G(x_0,r,\beta,\gamma),$ which consists of all test functions of type $(x_0,r,\beta,\gamma)$. The norm of $f$ in $
G(x_0,r,\beta,\gamma)$ is defined by
\[
    \|f\|_{G(x_0,r,\beta,\gamma)}
    := \inf\{C>0:\ {\rm(i)\  and \ (ii)}\ {\rm hold} \}.
\]

For each fixed $x_0$, let $G(\beta,\gamma) :=
G(x_0,1,\beta,\gamma)$. It is easy to check that for each fixed
$x_0'\in X$ and $r > 0$, we have $G(x_0',r,\beta,\gamma) =
G(\beta,\gamma)$ with equivalent norms. Furthermore, it is also
easy to see that $G(\beta,\gamma)$ is a Banach space with
respect to the norm on $G(\beta,\gamma)$.

For $\beta \in (0,\eta]$ and $\gamma > 0$, let
$\GGs(\beta,\gamma)$ be the completion of the space
$G(\eta,\gamma)$ in the norm of $G(\beta,\gamma)$; of course
when $\beta = \eta$ we simply have $\GGs(\beta,\gamma) =
\GGs(\eta,\gamma) = G(\eta,\gamma)$. We define the norm on
$\GGs(\beta,\gamma)$ by $\|f\|_{\GGs(\beta,\gamma)} :=
\|f\|_{G(\beta,\gamma)}$.

\begin{defnB}[\cite{HLW}]
    \textup{(Distributions)} Fix $x_0\in X$, $r > 0$,
    $\beta\in(0,\eta]$ where $\eta$ is the regularity exponent
    from Theorem A and $\gamma > 0$.
    The \emph{distribution space} $(\GGs(\beta,\gamma))'$ is
    defined to be the set of all linear functionals
    $\mathcal{L}$ from $\GGs(\beta,\gamma)$ to $\mathbb{C}$
    with the property that there exists $C > 0$ such that for
    all $f\in \GGs(\beta,\gamma)$,
    \[
        |\mathcal{L}(f)|
        \leq C\|f\|_{\GGs(\beta,\gamma)}.
    \]
\end{defnB}
A fundamental result proved in \cite{HLW} is the following wavelet representation
for test functions and distributions.
\begin{thmB}[\cite{HLW}]\label{thm reproducing formula test function}
    \textup{(Wavelet reproducing formula for test functions and distributions)} Suppose
    that $f\in \GGs(\beta,\gamma)$ with $\beta$,
    $\gamma\in(0,\eta)$. Then the wavelet reproducing formula
    \begin{equation}\label{eqn:reproducing_formula}
    f(x)
        = \sum_{k\in\mathbb{Z}}\sum_{\alpha \in \mathscr{Y}^k}
            \langle f,\psi_{\alpha}^k \rangle \psi_{\alpha}^k(x)
    \end{equation}
    holds in $\GGs(\beta',\gamma')$ for all $\beta'\in(0,\beta)$ and
    $\gamma'\in(0,\gamma)$. Moreover, the wavelet
    reproducing formula~\eqref{eqn:reproducing_formula} also
    holds in the space $(\GGs(\beta,\gamma))'$ of
    distributions.
    \end{thmB}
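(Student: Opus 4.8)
The plan is to upgrade the $L^2(X)$ wavelet expansion of Theorem A to convergence in the test-function topology by means of Calder\'on-type almost-orthogonality estimates. Write $S_Nf:=\sum_{|k|\le N}\sum_{\alpha\in\mathscr{Y}^k}\langle f,\psi_\alpha^k\rangle\psi_\alpha^k$, and recall from the remark after Definition A that $\psi_\alpha^k$ is $\sqrt{\mu(B(x_\alpha^k,\delta^k))}$ times a normalized test function of type $(x_\alpha^k,\delta^k,\eta,\sigma)$ for \emph{every} decay exponent $\sigma>0$. The first step is the coefficient estimate: if $f$ is a test function of type $(x_0,r,\beta,\gamma)$ then
\[
  |\langle f,\psi_\alpha^k\rangle|
  \lesssim \|f\|_{G(x_0,r,\beta,\gamma)}\,\sqrt{\mu(B(x_\alpha^k,\delta^k))}\,
  \Big(\frac{\min(\delta^k,r)}{\max(\delta^k,r)}\Big)^{\beta}
  \frac{1}{V_{\max(\delta^k,r)}(x_0)+V(x_0,x_\alpha^k)}
  \Big(\frac{\max(\delta^k,r)}{\max(\delta^k,r)+d(x_0,x_\alpha^k)}\Big)^{\gamma}.
\]
For $\delta^k\le r$ this comes from subtracting a constant (the cancellation \eqref{cancellation} of $\psi_\alpha^k$) and using the $\beta$-H\"older condition on $f$; for $\delta^k>r$ one uses instead the cancellation of $f$ against the H\"older regularity \eqref{Holder-regularity} of $\psi_\alpha^k$ together with $\eta\ge\beta$; in both cases the arbitrarily fast decay of $\psi_\alpha^k$, the decay of $f$, and the doubling property \eqref{doubling condition} produce the spatial factor. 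For $f$ in the completion $\GGs(\beta,\gamma)$ the estimate follows by density.

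Next, fix $\beta'\in(0,\beta)$ and $\gamma'\in(0,\gamma)$. Using the coefficient estimate together with the size and regularity of the $\psi_\alpha^k$, I would show that $\|S_Nf\|_{G(\beta',\gamma')}\lesssim\|f\|_{G(\beta,\gamma)}$ uniformly in $N$ and that $\{S_Nf\}_N$ is Cauchy in $\GGs(\beta',\gamma')$. The mechanism: first sum the coefficient bound over $\alpha\in\mathscr{Y}^k$ --- the balls $B(x_\alpha^k,\delta^k)$ have bounded overlap and essentially cover $X$, so $\sum_\alpha\mu(B(x_\alpha^k,\delta^k))$ times a kernel centered at $x_\alpha^k$ collapses to a single kernel at scale $\max(\delta^k,r)$ --- then sum over $|k|>N$, obtaining a geometric series in a positive power of $\delta$ and hence an $O(\delta^{cN})$ tail; this ``second almost-orthogonality'' step is where the loss $\beta'<\beta$, $\gamma'<\gamma$ is incurred. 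To identify the limit, note that the size condition gives continuous inclusions $\GGs(\beta,\gamma)\hookrightarrow L^2(X)$ and $\GGs(\beta',\gamma')\hookrightarrow L^2(X)$; since $f\in\GGs(\beta,\gamma)\subset L^2(X)$, Theorem A gives $S_Nf\to f$ in $L^2(X)$, while the previous estimate gives $S_Nf\to g$ in $\GGs(\beta',\gamma')$ and hence in $L^2(X)$. Therefore $g=f$, and \eqref{eqn:reproducing_formula} holds in $\GGs(\beta',\gamma')$.

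For the distribution statement, let $\mathcal{L}\in(\GGs(\beta,\gamma))'$; the coefficients $\langle\mathcal{L},\psi_\alpha^k\rangle$ are defined because $\psi_\alpha^k\in G(\eta,\gamma)\subset\GGs(\beta,\gamma)$, and $|\langle\mathcal{L},\psi_\alpha^k\rangle|\lesssim\|\mathcal{L}\|\,\|\psi_\alpha^k\|_{\GGs(\beta,\gamma)}\lesssim\|\mathcal{L}\|\sqrt{\mu(B(x_\alpha^k,\delta^k))}$. Pairing with $g\in\GGs(\beta,\gamma)$ and invoking the coefficient estimate for $g$, the double series $\sum_{k}\sum_{\alpha\in\mathscr{Y}^k}\langle\mathcal{L},\psi_\alpha^k\rangle\langle\psi_\alpha^k,g\rangle$ converges absolutely, with sum $\lesssim\|\mathcal{L}\|\,\|g\|_{\GGs(\beta,\gamma)}$. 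To see it equals $\langle\mathcal{L},g\rangle$, I would take $g\in G(\eta,\gamma)$ and rerun the estimates above with the regularity exponent $\eta$ in place of $\beta$: the surplus $\eta-\beta>0$ now supplies the convergence rate (the decay index $\gamma$ is not degraded, since the fast decay of the $\psi_\alpha^k$ is spent on only one factor of the composition), so $S_Ng\to g$ in $\|\cdot\|_{G(\beta,\gamma)}$, i.e.\ in $\GGs(\beta,\gamma)$. As $G(\eta,\gamma)$ is dense in $\GGs(\beta,\gamma)$ and the $S_N$ are uniformly bounded there, $S_Ng\to g$ in $\GGs(\beta,\gamma)$ for every $g\in\GGs(\beta,\gamma)$; hence, by the linearity of $\mathcal{L}$ on the finite sums $S_Ng$ and its continuity,
\[
  \langle\mathcal{L},g\rangle=\lim_{N\to\infty}\langle\mathcal{L},S_Ng\rangle
  =\lim_{N\to\infty}\sum_{|k|\le N}\sum_{\alpha\in\mathscr{Y}^k}\langle\mathcal{L},\psi_\alpha^k\rangle\langle\psi_\alpha^k,g\rangle,
\]
which is \eqref{eqn:reproducing_formula} interpreted in $(\GGs(\beta,\gamma))'$.

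The main obstacle is the second step: proving the coefficient and composition estimates and, above all, making them uniform in $N$ while summing over all scales $k\in\mathbb{Z}$. This rests on the dyadic-cube structure of the reference points $\mathscr{Y}^k$, the doubling property, and the composition lemma for the kernels $\frac{1}{V_s(x)+V(x,y)}\big(\frac{s}{s+d(x,y)}\big)^{\gamma}$. A secondary delicate point, needed only for the distribution part, is upgrading $S_Ng\to g$ from the norm $\|\cdot\|_{G(\beta',\gamma')}$ to $\|\cdot\|_{G(\beta,\gamma)}$ for genuine test functions $g$ --- which is exactly why the completion $\GGs(\beta,\gamma)$, rather than $G(\beta,\gamma)$ itself, is the correct domain for the distributions.
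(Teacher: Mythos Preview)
The present paper does not prove Theorem~B; it is quoted from \cite{HLW} as a background result and used without argument, so there is no proof here against which to compare your attempt.

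That said, the strategy you sketch is the standard one and is essentially what \cite{HLW} carries out: an almost-orthogonality estimate for $\langle f,\psi_\alpha^k\rangle$ obtained by playing cancellation of one factor against H\"older regularity of the other, then summation over~$\alpha$ (using the dyadic-cube structure of $\mathscr{Y}^k$ and doubling) and over~$k$ (geometric decay, at the cost of the exponent losses $\beta'<\beta$, $\gamma'<\gamma$), with the $L^2$ expansion of Theorem~A identifying the limit. The duality step for distributions via convergence of $S_Ng$ in the test-function topology is also the natural route.

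One point deserves care. In the distribution part you need $S_Ng\to g$ in $\GGs(\beta,\gamma)$ itself, not merely in $\GGs(\beta',\gamma')$; your first step, however, only yields the latter. For the regularity index this is harmless, since $g\in G(\eta,\gamma)$ has the surplus $\eta-\beta>0$; but for the decay index there is no surplus, and your remark that ``the decay index $\gamma$ is not degraded'' because the fast decay of $\psi_\alpha^k$ absorbs the loss is precisely the place where the argument must be made explicit. In practice this works --- one takes the decay exponent of $\psi_\alpha^k$ strictly larger than~$\gamma$ in the composition estimate, so the output still carries the full~$\gamma$ --- but it does not follow by simply invoking the first part of the theorem with different parameters, and both the Cauchy estimate and the uniform boundedness of the $S_N$ on $\GGs(\beta,\gamma)$ need to be reproved at this sharper level. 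This is the genuine technical content behind the choice to define distributions on the \emph{completion} $\GGs(\beta,\gamma)$ of $G(\eta,\gamma)$ rather than on $G(\beta,\gamma)$.
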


Based on the above wavelet reproducing formula, the Littlewood--Paley square function
in terms of wavelet coefficients is defined by the following
\begin{defnC}[\cite {HLW}]\label{def:discrete_square_function}
For $f$ in $(\GGs(\beta,\gamma))'$ with $\beta$, $\gamma\in(0,\eta)$, the \emph{discrete
Littlewood--Paley square function $S(f)$ of $f$} is defined by

    \begin{eqnarray}
        S(f)(x)
        := \Big\{ \sum_{k}\sum_{\alpha\in\mathscr{Y}^k} \big|
            \langle \psi_{\alpha}^{k},f \rangle
            \widetilde{\chi}_{{Q}_{\alpha}^{k}}(x)
            \big|^2 \Big\}^{1/2},
    \end{eqnarray}
    where $\widetilde{\chi}_{{ Q}_{\alpha}^{k}}(x) := \chi_{{
    Q}_{\alpha}^{k}}(x) \mu({ Q}_{\alpha}^{k})^{-1/2}$ and $\chi_{{
    Q}_{\alpha}^{k}}(x)$ is the indicator function of the dyadic
    cube~${ Q}_{\alpha}^{k}$.
\end{defnC}
The Hardy space on space of homogeneous type in the sense of
Coifman and Weiss then is introduced as follows.
\begin{defnD}[\cite{HLW}] \label{def-Hp}
    Suppose that $0<\beta, \gamma<\eta$ and $\frac{\omega}{\omega+\eta} < p \leq 1$, where
    $\eta$ is the regularity given in Theorem A and $\omega$ is the upper dimension of $X.$
    The {\it Hardy space} $H^p(X)$ is defined by
    \[
        H^p(X)
        :=\big\lbrace f \in (\GG(\beta,\gamma))':
            S(f)\in L^p(X)\big\rbrace.
    \]
    The norm of $f\in H^p(X)$ is defined by
    $\|f\|_{H^p(X)} := \|S(f)\|_{L^p(X)}$.
\end{defnD}

Natural questions arise:

(1) Is the atomic Hardy space $H^p_{cw}(X)$ same as the Hardy space $H^p(X)$ with equivalent norms?

(2) Can one provide a criterion of the boundedness for singular integral operators on these Hardy spaces?

In this paper, we address the above questions. We will give a positive answer for the first question by the following
\begin{theorem}\label{thm 1.1}
Let $(X,d,\mu)$ be space of homogeneous type in the sense of Coifman and Weiss and $\frac{\omega}{\omega+\eta}< p \leq 1.$ Then
$H^p_{cw}(X)=H^p(X)$ with equivalent norms. More precisely, if $f\in H^p_{cw}(X)$ then $f\in H^p(X)$ and there exists a constant $C$ such that $\|f\|_{H^p}\leq C\|f\|_{H^p_{cw}}.$ Conversely, if $f\in H^p(X)$ then $f$ has an atomic decomposition:
$f=\sum_j \lambda_j a_j$ where all $a_j's$ are $(p,2)$ atoms and the series converges in the dual of $\mathcal{C}_\alpha(X), \alpha =\frac{1}{p} - 1.$ Moreover, $\sum_j |\lambda_j|^p\leq C\|f\|_{H^p}^p$
where the constant $C$ is independent of $f.$
\end{theorem}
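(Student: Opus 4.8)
The plan is to prove the two inclusions $H^p_{cw}(X)\subseteq H^p(X)$ and $H^p(X)\subseteq H^p_{cw}(X)$ separately; the first is soft and the second carries all the weight. For $H^p_{cw}(X)\subseteq H^p(X)$, one first checks that $H^p_{cw}(X)\subseteq(\GG(\beta,\gamma))'$, so that $S(f)$ makes sense: every $(p,2)$-atom lies in $L^2(X)$ and, by its cancellation, pairs boundedly against test functions. Since $S$ is pointwise subadditive (Minkowski's inequality in $\ell^2$) and $0<p\le1$, it then suffices to prove the uniform bound $\|S(a)\|_{L^p(X)}\le C$ over all $(p,2)$-atoms $a$ supported in balls $B=B(x_0,r)$, and to pass to the atomic series $f=\sum_j\lambda_j a_j$ by Fatou's lemma --- legitimate because each $\psi_\alpha^k$ belongs to $\GG(\beta,\gamma)$, so $\langle f,\psi_\alpha^k\rangle$, hence $S$, is continuous along convergence in $(\GG(\beta,\gamma))'$. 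For the uniform bound, split $X$ into a fixed dilate $\lambda A_0 B$ and its complement. On $\lambda A_0 B$, Hölder's inequality with exponent $2/p$, the Plancherel identity $\|S(a)\|_{L^2}=\|a\|_{L^2}$ (from orthonormality of the wavelets), the size bound on $a$, and doubling give $\int_{\lambda A_0 B}S(a)^p\,d\mu\le\big(\|a\|_{L^2}^2\big)^{p/2}\mu(\lambda A_0 B)^{1-p/2}\le C$. On $(\lambda A_0 B)^c$, use $\int_X a\,d\mu=0$ with the Hölder regularity and exponential decay of the $\psi_\alpha^k$ --- the almost-orthogonality estimates for $\langle a,\psi_\alpha^k\rangle$ --- and sum the resulting geometric series over $k$ and $\alpha$; the restriction $p>\omega/(\omega+\eta)$ is exactly what makes this tail finite.

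For the reverse inclusion $H^p(X)\subseteq H^p_{cw}(X)$, which is the heart of the matter, given $f\in H^p(X)$ expand $f=\sum_{k\in\Z}\sum_{\alpha\in\mathscr Y^k}\langle f,\psi_\alpha^k\rangle\psi_\alpha^k$ in $(\GG(\beta,\gamma))'$ via Theorem B. For each $i\in\Z$ set $\Omega_i=\{x\in X:S(f)(x)>2^i\}$ and $\widetilde\Omega_i=\{x:\mathcal M(\chi_{\Omega_i})(x)>\tfrac12\}$, where $\mathcal M$ is the Hardy--Littlewood maximal operator, and let $\{Q_\tau^i\}_\tau$ be the maximal dyadic cubes inside $\widetilde\Omega_i$. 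Attach to each pair $(k,\alpha)$ the unique $i$ with $\mu(Q_\alpha^k\cap\Omega_i)>\tfrac12\mu(Q_\alpha^k)\ge\mu(Q_\alpha^k\cap\Omega_{i+1})$ (which exists since $Q_\alpha^k\subseteq\{S(f)>0\}$ and $\mu(Q_\alpha^k\cap\Omega_i)$ is non-increasing in $i$) and the unique $\tau$ with $Q_\alpha^k\subseteq Q_\tau^i$; this sorts the index set into blocks $\mathcal B_{i,\tau}$. Put $h_{i,\tau}=\sum_{(k,\alpha)\in\mathcal B_{i,\tau}}\langle f,\psi_\alpha^k\rangle\psi_\alpha^k$, so $f=\sum_{i,\tau}h_{i,\tau}$. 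Each $h_{i,\tau}$ has $\int h_{i,\tau}\,d\mu=0$, and the stopping-time (Carleson) estimate --- obtained by reading $|\langle f,\psi_\alpha^k\rangle|^2\mu(Q_\alpha^k)^{-1}$ as one term of $S(f)^2$, restricting to the good portion $Q_\alpha^k\setminus\Omega_{i+1}$ (which carries at least half of $\mu(Q_\alpha^k)$), and integrating --- gives $\|h_{i,\tau}\|_{L^2}^2=\sum_{(k,\alpha)\in\mathcal B_{i,\tau}}|\langle f,\psi_\alpha^k\rangle|^2\le C\,2^{2i}\mu(Q_\tau^i)$.

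Because the $\psi_\alpha^k$ have only exponential, not compact, support, $h_{i,\tau}$ is not an atom but a $(p,2)$-molecule centered at $Q_\tau^i$, its tails controlled by the decay in Theorem A. Set $\lambda_{i,\tau}=C\,2^i\mu(Q_\tau^i)^{1/p}$, so $h_{i,\tau}/\lambda_{i,\tau}$ is a fixed multiple of a normalized molecule. Using disjointness of the $Q_\tau^i$ and $\sum_\tau\mu(Q_\tau^i)\le\mu(\widetilde\Omega_i)\le C\mu(\Omega_i)$ (weak type $(1,1)$ of $\mathcal M$), together with the elementary comparison $\sum_i2^{ip}\mu(\Omega_i)\sim\|S(f)\|_{L^p}^p$, one obtains $\sum_{i,\tau}\lambda_{i,\tau}^p\le C\|f\|_{H^p(X)}^p$. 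Finally, invoking the molecular theory developed in the paper --- every $(p,2)$-molecule is a sum $\sum_\ell\mu_\ell b_\ell$ of $(p,2)$-atoms with $\sum_\ell|\mu_\ell|^p$ bounded by its molecular norm --- and relabelling, one gets an atomic decomposition $f=\sum_j\lambda_j a_j$ with $\sum_j|\lambda_j|^p\le C\|f\|_{H^p(X)}^p$; convergence in $(\mathcal C_\alpha(X))'$, $\alpha=\tfrac1p-1$, follows from absolute summability in that duality. The main obstacle is this last stretch: establishing the molecular decomposition when $d$ carries no regularity and $\mu$ satisfies only the doubling property, where the usual annular splitting of a molecule into atoms must be rebuilt from scratch --- and it is exactly the exponential (rather than polynomial) decay of the Auscher--Hyt\"onen wavelets that keeps the tail atoms $\ell^p$-summable. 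The near-support $L^2$ bound, the stopping-time estimate, the layer-cake comparison, and the convergence bookkeeping are all routine by comparison.
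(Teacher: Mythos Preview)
Your argument for $H^p_{cw}\subseteq H^p$ is essentially the paper's. For the converse, however, your route diverges from the paper's and carries a genuine gap.

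The paper explicitly notes that the wavelet expansion is \emph{not} suited to atomic decomposition because the $\psi_\alpha^k$ lack compact support, and it does \emph{not} repair this via molecules. Instead it builds a new Calder\'on-type reproducing formula (Proposition~\ref{ new Caderon-type reproducing formula}) of the form $f=\sum_{k,\alpha}\mu(Q_\alpha^{k+N})D_k(\cdot,x_\alpha^{k+N})\widetilde D_k(g)(x_\alpha^{k+N})$, where the kernels $D_k(x,y)$ are \emph{compactly supported}, constructed from Coifman's approximation to the identity on the H\"older-regular quasi-metric $d'$ of Mac\'ias--Segovia equivalent to $d$. The stopping-time blocks built from this formula are then genuine $(p,2)$-atoms outright. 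The argument is carried out on the dense subspace $L^2(X)\cap H^p(X)$, and convergence in $(\mathcal C_{1/p-1})'$ is obtained via the embedding $\mathcal C_{1/p-1}(X)\subset\cmo^p(X)$ (Lemma~\ref{lipcmop6}) together with the duality $(H^p)'=\cmo^p$.

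Your molecular detour has two problems. First, a circularity: the paper's molecule theorem (Theorem~\ref{moleculeinHp}) invokes Theorem~\ref{thm 1.1} in its proof, so you cannot quote it as stated; you would have to extract only the atoms-from-a-molecule splitting, which is independent of Theorem~\ref{thm 1.1}, and say so. Second, and more substantively, you assert that $h_{i,\tau}/\lambda_{i,\tau}$ is a $(p,2,\epsilon)$-molecule in the sense of Definition~\ref{def-molecule}, but your stopping-time argument gives only the unweighted bound $\|h_{i,\tau}\|_{L^2}^2\le C\,2^{2i}\mu(Q_\tau^i)$. The molecular condition also demands control of $\int h_{i,\tau}(x)^2 V(x,x_0)^{1+2(\eta-\epsilon)/\omega}\,d\mu(x)$, which requires summing the exponential tails of all $\psi_\alpha^k$ with $Q_\alpha^k\subset Q_\tau^i$ across infinitely many fine scales against a polynomial weight; this is plausible but is a separate almost-orthogonality computation that you nowhere supply, and it is exactly the step the paper's compact-support formula is designed to avoid.
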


We would like to point out that the most significate integrant of Theorem \ref{thm 1.1} is the method of atomic decomposition for subspace $L^2(X)\cap H^p(X)$. More precisely, if $f\in L^2(X)\cap H^p(X)$ then $f$ has an atomic decomposition which converges in both $L^2(X)$ and $H^p(X).$ These facts play a crucial role in this paper. We also remark that if $f$ is a distribution in $(\GGs(\beta,\gamma))'$, in general, $f$ may not be a linear functional on $\mathcal{C}_\alpha(X).$ However, Theorem \ref{thm 1.1} implies that if $f$ is a distribution in $(\GGs(\beta,\gamma))'$ and belongs to $H^p(X),$ then $f$ can be defined as a linear functional on $\mathcal{C}_\alpha(X).$ One may also observe that the wavelet reproducing formula is not available for providing an atomic decomposition for $H^p(X)$ since the wavelets $\psi^k_\alpha(x)$ have no compact supports. To overcome this problem, a crucial idea is to establish a new kind of
Calder\'on-type reproducing formula. See Proposition \ref{ new Caderon-type reproducing formula} below for such a new reproducing formula. As mentioned before, Mac\'{i}as and Segovia \cite{MS2} gave the maximal function characterization of the Hardy space only for the so-called normal spaces $(X, d,\mu)$ where $d$ satisfies the regularity condition in \eqref{smooth metric}
 and $\mu$ satisfies the condition in \eqref{regular}. They proved the relation between the atomic Hardy space $H^p_{cw}(X)$ and the maximal Hardy space $H^p_{max}(X)$ only in the sense that if $f$ in $H^p_{cw}(X),$ denoting by $\widetilde f$ the restriction of $f$ to $E^\alpha,$ the test function space on normal space $(X,d,\mu),$ then $\mathcal{F}f=\widetilde f$ defines an injective linear transformation from $H^p_{cw}(X)$ onto the space of the distribution $g$ on $E^\alpha$ such that $g^*_\gamma(x)$ belongs to $L^p(X, d\mu))$ and there exist two positive and finite constants $c_1$ and $c_2$ such that
$$c_1\|f\|_{H^p_{cw}}\leq \Big(\int{\widetilde f}^*_\gamma(x)^p d\mu(x)\Big)^{\frac{1}{p}}\leq c_2\|f\|_{H^p_{cw}}.$$
See Theorem 5.9 in \cite{MS2} for more details.

In order to provide the criterion of the boundedness for singular integrals on the Hardy spaces, we now define
singular integral operator on spaces of homogeneous type in the sense of Coifman and Weiss.
\begin{definition}\label{def 1}
We say that $T$ is a singular integral operator on space of homogeneous type $(X,d,\mu)$ if $T$ is of the form
$T(f)(x)=\int K(x,y)f(y) d\mu(y),$ where $K(x,y),$ the kernel of $T,$ satisfies the following estimates:

\begin{eqnarray}\label{size of C-Z-S-I-O}
    |K(x, y)|
    \leq {{C}\over {V(x, y)}}
\end{eqnarray}
for all $x\not= y$;
\begin{eqnarray}\label{x smooth of C-Z-S-I-O}
    |  K(x, y) - K(x', y) |
    \leq {C\over V(x,y)}\Big({d(x, x')\over d(x,y)}\Big)^\eta
\end{eqnarray}
for $d(x, x')\leq (2A_0)^{-1} d(x, y)$;
\begin{eqnarray}\label{y smooth of C-Z-S-I-O}
 |  K(x, y) - K(x, y') |
    \leq {C\over V(x,y)} \Big({d(y, y')\over d(x,y)}\Big)^\eta
\end{eqnarray}
for $d(y, y')\leq (2A_0)^{-1} d(x, y).$
\end{definition}
The criterion of the boundedness for singular integrals on the Hardy space $H^p(X),$ i.e., the answer of the second question, is the following
\begin{theorem}\label{thm T1Hp}
  Suppose that $T$ is a singular integral operator with the kernel $K(x,y)$ satisfying the estimates \eqref{size of C-Z-S-I-O} and \eqref{y smooth of C-Z-S-I-O}, and $T$
  is bounded on $L^2(X).$ Then $T$ extends to be a bounded operator on $H^p(X), \frac{\omega}{\omega+\eta}< p \leq 1,$ if and only if $T^*(1)=0$.
\end{theorem}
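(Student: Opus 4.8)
The plan is to establish the two directions separately, with the sufficiency of $T^{*}(1)=0$ being the substantial part; it rests on the atomic decomposition furnished by Theorem~\ref{thm 1.1} and on a molecule theory for $H^{p}(X)$, which one develops first: every $(p,2,\varepsilon)$-molecule associated with a ball $B$ at its natural scale, for a fixed $\varepsilon$ with $\omega(\tfrac{1}{p}-1)<\varepsilon<\eta$, belongs to $H^{p}(X)$ with quasi-norm bounded by an absolute constant, and this is proved directly from the wavelet/square-function definition of $H^{p}(X)$ by pairing a molecule against the $\psi_{\alpha}^{k}$ and using \eqref{exponential decay}, \eqref{Holder-regularity}, \eqref{cancellation}, the Auscher--Hyt\"onen reproducing formula, almost-orthogonality, and a stopping-time grouping of the scales. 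Throughout, $T^{*}(1)=0$ is read in the usual way: $\int_{X}Ta\,d\mu=0$ for every $(p,2)$-atom $a$ (equivalently, $T^{*}(1)$ is constant as an element of $\bmo(X)$); note that \eqref{size of C-Z-S-I-O} and \eqref{y smooth of C-Z-S-I-O} together with $\int a\,d\mu=0$ already force $Ta\in L^{1}(X)$, so this integral is meaningful.

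\textbf{Sufficiency.} Since finite linear combinations of $(p,2)$-atoms lie in $L^{2}(X)\cap H^{p}(X)$ and are dense in $H^{p}(X)$ (Theorem~\ref{thm 1.1}), it suffices to prove $\|Tf\|_{H^{p}(X)}\le C\|f\|_{H^{p}(X)}$ for $f\in L^{2}(X)\cap H^{p}(X)$ and then extend by density. For such $f$, the remark after Theorem~\ref{thm 1.1} gives $f=\sum_{j}\lambda_{j}a_{j}$ with each $a_{j}$ a $(p,2)$-atom supported in a ball $B_{j}=B(x_{j},r_{j})$, $\sum_{j}|\lambda_{j}|^{p}\le C\|f\|_{H^{p}(X)}^{p}$, the series converging in $L^{2}(X)$; since $T$ is bounded on $L^{2}(X)$, $Tf=\sum_{j}\lambda_{j}Ta_{j}$ in $L^{2}(X)$. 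The heart of the matter is that each $Ta_{j}$ is, up to a uniform constant multiple, a $(p,2,\varepsilon)$-molecule associated with $B_{j}$: its local $L^{2}$-size comes from $\|Ta_{j}\|_{L^{2}(X)}\le\|T\|_{L^{2}\to L^{2}}\|a_{j}\|_{L^{2}(X)}\le C\mu(B_{j})^{1/2-1/p}$; its decay comes from $\int a_{j}\,d\mu=0$ and \eqref{y smooth of C-Z-S-I-O}, writing $Ta_{j}(x)=\int[K(x,y)-K(x,x_{j})]a_{j}(y)\,d\mu(y)$ for $x$ outside a fixed dilate of $B_{j}$, using $d(x,y)\sim d(x,x_{j})$ and $V(x,y)\sim\mu(B(x_{j},d(x,x_{j})))$, then splitting $X\setminus B_{j}$ into annuli and combining this pointwise bound with an $L^{2}$ estimate (again via \eqref{y smooth of C-Z-S-I-O}) on each annulus; and its cancellation $\int_{X}Ta_{j}\,d\mu=0$ is precisely the hypothesis $T^{*}(1)=0$. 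By the molecule theory, $\|Ta_{j}\|_{H^{p}(X)}\le C$ uniformly in $j$. Because $S(\cdot)$ is subadditive and $\|\cdot\|_{L^{p}(X)}^{p}$ is $p$-subadditive, $\bigl\|\sum_{N<j\le M}\lambda_{j}Ta_{j}\bigr\|_{H^{p}(X)}^{p}\le\sum_{N<j\le M}|\lambda_{j}|^{p}\|Ta_{j}\|_{H^{p}(X)}^{p}\le C\sum_{j>N}|\lambda_{j}|^{p}\to0$, so $\sum_{j}\lambda_{j}Ta_{j}$ converges in the complete space $H^{p}(X)$; since it also converges to $Tf$ in $L^{2}(X)$ and both modes of convergence give weak-$*$ convergence in $(\GGs(\beta,\gamma))'$, the $H^{p}(X)$-limit equals $Tf$, whence $Tf\in H^{p}(X)$ and $\|Tf\|_{H^{p}(X)}^{p}\le C\sum_{j}|\lambda_{j}|^{p}\le C\|f\|_{H^{p}(X)}^{p}$.

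\textbf{Necessity.} Suppose $T$ extends to a bounded operator on $H^{p}(X)$. For a $(p,2)$-atom $a$ one has $\|a\|_{H^{p}(X)}\le C$, hence $Ta\in H^{p}(X)=H^{p}_{cw}(X)$ by Theorem~\ref{thm 1.1}; together with $Ta\in L^{1}(X)$ this forces $\int_{X}Ta\,d\mu=0$, because a function in $H^{p}_{cw}(X)$ pairs to zero against the constant function (the constant being the zero element of the Campanato space $\mathcal{C}_{\alpha}(X)$ on whose dual $H^{p}_{cw}(X)$ is modelled), and this pairing is consistent with $\int_{X}Ta\,d\mu$ when $Ta\in L^{1}(X)$. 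As $a$ was an arbitrary $(p,2)$-atom, $T^{*}(1)=0$. Equivalently, $\int_{B}g\cdot T^{*}(1)\,d\mu=0$ for every ball $B$ and every $g\in L^{2}(B)$ with $\int g\,d\mu=0$, so $T^{*}(1)$ is constant on each ball and hence on $X$, letting $B=B(x_{0},R)$ with $R\to\infty$.

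\textbf{Main obstacle.} The crux is the sufficiency direction, and within it the molecule theory: proving that $(p,2,\varepsilon)$-molecules lie in $H^{p}(X)$ with uniformly bounded quasi-norm must be carried out purely through the wavelet coefficients $\langle m,\psi_{\alpha}^{k}\rangle$, playing the molecule's mean-zero property, $L^{2}$-size, and decay against \eqref{exponential decay}, \eqref{Holder-regularity}, \eqref{cancellation}, and organizing the doubly-indexed sum by scale separation and a stopping-time argument; making these estimates uniform is delicate precisely because $d$ is only a quasi-metric with no regularity and $\mu$ is merely doubling. A secondary technical point is the bookkeeping showing that $\sum_{j}\lambda_{j}Ta_{j}$, convergent in $L^{2}(X)$, also converges in $H^{p}(X)$ and to $Tf$; and in the necessity direction, the consistency of the $H^{p}_{cw}(X)$--$\mathcal{C}_{\alpha}(X)$ pairing with the $L^{1}$--$L^{\infty}$ pairing on the constant function.
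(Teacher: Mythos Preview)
Your sufficiency plan matches the paper's: reduce to atoms via Theorem~\ref{thm 1.1}, show $Ta$ is a $(p,2,\varepsilon)$-molecule (local $L^2$ bound from the $L^2$-boundedness of $T$, off-support decay from the cancellation of $a$ together with \eqref{y smooth of C-Z-S-I-O}, and cancellation from $T^*(1)=0$), then conclude via the molecule theory. The one substantive difference is in how you propose to prove that molecules lie in $H^p(X)$: you suggest estimating $\langle m,\psi_\alpha^k\rangle$ directly and controlling $S(m)$ in $L^p$, whereas the paper (Theorem~\ref{moleculeinHp}) instead decomposes the molecule into a sum of $(p,2)$-atoms by a stopping-time argument on the \emph{measures} of annular shells around the center $x_0$ (selecting radii $2^{j_k}2^{i_0}\sigma$ so that each successive shell has measure comparable to the ball it surrounds), and then feeds this back into Theorem~\ref{thm 1.1}. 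Your route is plausible but likely more laborious; the paper's route recycles the atomic theory already built.

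Your necessity argument, however, has a genuine gap. You claim that $Ta\in L^1(X)\cap H^p_{cw}(X)$ forces $\int_X Ta\,d\mu=0$ because ``a function in $H^p_{cw}(X)$ pairs to zero against the constant function'' and this pairing ``is consistent with $\int_X Ta\,d\mu$ when $Ta\in L^1(X)$''. But the constant $1$ is the \emph{zero} element of $\mathcal{C}_{\alpha}(X)$, so the first assertion is vacuous; all of the content lies in the consistency claim, and that claim is exactly what must be proved: the $L^1$--$L^\infty$ pairing $\phi\mapsto\int g\phi\,d\mu$ descends to a well-defined functional on $\mathcal{C}_\alpha(X)$ (equivalence classes modulo constants) \emph{if and only if} $\int g\,d\mu=0$, so the argument is circular. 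The paper handles this by an independent result, Proposition~\ref{HpinLp}: for $f\in L^2(X)\cap H^p(X)$ one has $\|f\|_{L^p}\le C\|f\|_{H^p}$, proved via the new Calder\'on-type reproducing formula (Proposition~\ref{ new Caderon-type reproducing formula}) and the same stopping-time decomposition used in Theorem~\ref{thm 1.1}. Interpolating with $L^2$ then places $f$ in $L^1$ with quantitative control, and running the wavelet expansion (whose tails now go to zero in $L^1$, since they go to zero in both $L^2$ and $H^p$) yields $\int_X f\,d\mu=0$ for every $f\in L^2(X)\cap H^p(X)$; applying this to $Tf$ gives necessity in the form actually required by the theorem (for all $f\in L^2\cap H^p$, not just atoms). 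You flagged this as a ``secondary technical point'', but the paper explicitly identifies it as the step where the Euclidean proof via the Fourier transform breaks down on a general space of homogeneous type, and Proposition~\ref{HpinLp} is the replacement.
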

Here, if $T$ is bounded on $L^2(X)$ and $H^p(X),$ then $T^*(1)=0$ means that
$$\langle f, T^*(1)\rangle=\langle T(f), 1\rangle=\int T(f)(x) d\mu(x)=0$$
for all $f\in L^2(X)\cap H^p(X), \frac{\omega}{\omega+\eta}< p \leq 1.$

We would like to remark that by Theorem \ref{thm 1.1}, one could state Theorem \ref{thm T1Hp} with $H^p(X)$ replaced by $H^p_{cw}(X).$
The reason for not doing this is that the atomic Hardy space $H^p_{cw}(X)$ is not convenient
for proving the boundedness of operators. Indeed, if $f\in H^p_{cw}(X)$ has an atomic decomposition
$f(x)=\sum_j \lambda_j a_j(x),$ in general, $T(f)(x)$ can not be written as $\sum_j \lambda_j T(a_j)(x)$
even when $T$ is bounded on $L^2(X)$ and $f\in L^2(X)\cap H^p_{cw}(X).$ However, $H^p(X)$ is more convenient to use for proving the boundedness of
operators on the Hardy space. This is because $L^2(X)\cap H^p(X)$ is dense in $H^p(X)$ and, as proved in Theorem \ref{thm 1.1}, $f\in L^2(X)\cap H^p(X)$ has a nice atomic decomposition which converges
in both $L^2(X)$ and $H^p(X).$ Therefore, if $T$ is bounded on $L^2(X)$ one can get
$T(f)(x)=\sum_j \lambda_j T(a_j)(x).$ Then it suffices to verify that $T(a)$ is in $H^p(X)$ with the upper
bound uniformly for all $(p,2)$ atoms $a(x).$ And this can be concluded by applying
the molecule theory. Note that the molecule theory was developed by Coifman and Weiss for $(X,\rho,\mu)$ where $\rho$ is the measure distance, see page 594 in \cite{CW2}. In this paper, we develop the molecule theory for $(X,d,\mu)$ with the original quasi metric $d$ and the doubling measure $\mu,$ see Theorem \ref{moleculeinHp} below. Moreover, the method of atomic decomposition for subspace $L^2(X)\cap H^p(X)$ will also be applied for the proof of
the necessary condition that $T^*(1)=0$ if $T$ is bounded on $L^2(X)$ and $H^p(X).$ Note that this necessary condition on
$ \mathbb{R}^n$ was obtained directly from the fact that if $f\in L^2(\mathbb{R}^n)\cap H^p(\mathbb{R}^n)$ then $\int_{ \mathbb{R}^n} f(x)dx=0.$ This last fact follows from the estimate of the Fourier transform for $f\in L^2(\mathbb{R}^n)\cap H^p(\mathbb{R}^n)$.
Since the Fourier transform is missing on general spaces of homogeneous type, to show $\int_X f(x) d\mu(x)=0$ for all $f\in L^2(X)\cap H^p(X), \frac{\omega}{\omega+\eta}< p \leq 1,$ a new approach used in this paper is to prove the estimate
$$||f||_{L^p}\leq C||f||_{H^p}$$
for $f\in L^2(X)\cap H^p(X)$ with the constant $C$ independent of the $L^2(X)$ norm of $f.$ This estimate has their own interest and the method of the atomic decomposition of subspace $L^2(X)\cap H^p(X)$ plays a crucial role in the proof for such an estimate. See Proposition \ref{HpinLp} below for details of the proof.

The last main result in this paper is the boundedness of singular integrals on the dual of the Hardy space. It was
well-known that the Campanato space $\mathcal{C}_{\frac{1}{p} - 1}(X), 0<p\leq 1,$ is the dual of the atomic
Hardy space $H^p_{cw}(X)$ as well as that $\bmo(X)$ is the dual of $H^1_{cw}(X).$  In \cite{HLW} the Carleson measure spaces $\cmo^p(X)$ were introduced and it was proved that space $\cmo^p(X)$ is the dual of $H^p(X)$ as well as $\cmo^1(X)=\bmo(X)$ is the dual of $H^1(X).$ We will prove the boundedness of singular integrals on $\cmo^p(X).$ The reason for doing this is that we will show that $L^2(X)\cap \cmo^p(X)$ is a dense subspace of $\cmo^p(X)$ in the weak topology sense. See Lemma \ref{weaktopology} below. This weak density argument plays a similar role as the subspace $L^2(X)\cap H^p(X)$ does in the proofs of Theorem \ref{thm 1.1} and Theorem \ref{thm T1Hp}. To be precise, the space $\cmo^p(X)$ is defined by the following

\begin{defnE}[\cite{HLW}]\label{def-CMO}
    Let $(X,d,\mu)$ be space of homogeneous type in the sense of Coifman and Weiss. Suppose that ${\omega\over
\omega+\eta}  < p \leq 1$, where
    $\omega$ is the upper dimension of $X$.
    The {\it Carleson measure space} $\cmo^p(X)$ is defined by
$$
        \cmo^p(X)
        := \big\{ f \in (\GG(\beta, \gamma))' : \mathcal{C}_p(f)< \infty\},
$$
    where
    $$\mathcal{C}_p(f):= \sup_{Q}\Big\{ \frac{1}{\mu(Q)^{{2\over p} - 1}}
            \sum_{k\in\mathbb{Z}, \alpha\in\mathscr{Y}^{k},
                 Q_{\alpha}^{k} \subset Q}
            \big| \langle \psi_{\alpha}^{k}, f \rangle
            \big|^2 \Big\}^{1/2},$$
    where $Q$ runs over all quasi-metric dyadic balls in the sense of Auscher and Hyt\"onen.
\end{defnE}
In \cite{HLW}, the following duality between $H^p(X)$ and $\cmo^p(X)$ was proved.
\begin{thmC}[\cite{HLW}]\label{thm-duality}
    Suppose ${\omega\over
\omega+\eta}  < p \leq 1$, where
    $\omega$ is the upper dimension of $X$.
    Then the Carleson measure space~$\cmo^p(X)$ is the
    dual of the Hardy space~$H^p(X)$:
    \[
        \big(H^p(X)\big)'
        = \cmo^p(X).
    \]
    More precisely, if $g\in \cmo^p(X)$ the map ${\ell }_g$ given by
${\ell}_g(f)=\langle f, g\rangle,$ defined initially for $f\in \GG(\beta, \gamma),$ extends to a continuous linear functional on $H^p(X)$ with $\Vert \ell_g\Vert \approx \Vert g\Vert_{\cmo^p(X)}.$ Conversely, for every ${\ell}\in (H^p(X)^*$ there exists some $g\in \cmo^p(X)$ so that ${\ell}={\ell}_g.$

    In particular,
    \[
        \big(H^1(X)\big)'
        = \bmo(X)=\cmo^1(X).
    \]
\end{thmC}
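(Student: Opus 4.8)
The plan is to transfer Theorem~C to a purely sequential duality between spaces of wavelet coefficients, and then to prove the latter by the stopping-time machinery familiar from tent-space duality, carried out over the Auscher--Hyt\"onen dyadic cubes. By Theorem~A the wavelets $\{\psi_\alpha^k\}$ form an orthonormal basis of $L^2(X)$; by Theorem~B the expansion $f=\sum_{k,\alpha}\langle f,\psi_\alpha^k\rangle\psi_\alpha^k$ reproduces every element of $\GG(\beta,\gamma)$ and of $(\GG(\beta,\gamma))'$; and each normalized wavelet is itself a test function. Hence the coefficient map $f\mapsto\{\langle f,\psi_\alpha^k\rangle\}$ is injective on $(\GG(\beta,\gamma))'$, sends $H^p(X)$ isometrically into the ``discrete tent space'' $s^p$ of sequences $t=\{t_\alpha^k\}$ with $\|\{\sum_{k,\alpha}|t_\alpha^k|^2\widetilde\chi_{Q_\alpha^k}^2\}^{1/2}\|_{L^p(X)}<\infty$, and sends $\cmo^p(X)$ isometrically into the discrete Carleson space defined by the finiteness of $\mathcal{C}_p$ rewritten in the variables $t_\alpha^k$. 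Truncating the wavelet expansion shows that finite wavelet combinations, which all lie in $\GG(\beta,\gamma)$, are dense in $H^p(X)$; so it suffices to prove the two estimates below for the bilinear pairing $\langle f,g\rangle=\sum_{k,\alpha}\langle f,\psi_\alpha^k\rangle\overline{\langle\psi_\alpha^k,g\rangle}$, which for $f\in\GG(\beta,\gamma)$ coincides with the distributional pairing.

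\textbf{Step 1 (every $g\in\cmo^p(X)$ is a bounded functional on $H^p(X)$).} Given $g\in\cmo^p(X)$ and a finite wavelet combination $f$, put $\Omega_j=\{x:S(f)(x)>2^j\}$ and $\widetilde\Omega_j=\{x:M(\chi_{\Omega_j})(x)>1-c_0\}$ for a small $c_0>0$ chosen relative to the ball-versus-cube comparison constant; by the weak $(1,1)$ bound for the Hardy--Littlewood maximal operator on $(X,d,\mu)$ one has $\mu(\widetilde\Omega_j)\lesssim\mu(\Omega_j)$. Decompose each $\widetilde\Omega_j$ into the pairwise disjoint maximal dyadic cubes $\{R_i^j\}_i$ it contains, and assign to each pair $(k,\alpha)$ with $\langle f,\psi_\alpha^k\rangle\neq0$ the largest $j$ with $Q_\alpha^k\subset\widetilde\Omega_j$, so that $Q_\alpha^k\not\subset\widetilde\Omega_{j+1}$ and $Q_\alpha^k$ lies in exactly one $R_i^j$. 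On the truncated tent $T_i^j=\{(k,\alpha):Q_\alpha^k\subset R_i^j,\ Q_\alpha^k\not\subset\widetilde\Omega_{j+1}\}$ one has $\mu(Q_\alpha^k\setminus\Omega_{j+1})\gtrsim\mu(Q_\alpha^k)$, hence, integrating $|\langle f,\psi_\alpha^k\rangle|^2\widetilde\chi_{Q_\alpha^k}^2$ over $R_i^j\setminus\Omega_{j+1}$ and summing, $\sum_{(k,\alpha)\in T_i^j}|\langle f,\psi_\alpha^k\rangle|^2\lesssim\int_{R_i^j\setminus\Omega_{j+1}}S(f)^2\,d\mu\lesssim 2^{2j}\mu(R_i^j)$, while the Carleson condition gives $\sum_{(k,\alpha)\in T_i^j}|\langle\psi_\alpha^k,g\rangle|^2\le\|g\|_{\cmo^p(X)}^2\,\mu(R_i^j)^{2/p-1}$. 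Cauchy--Schwarz on each $T_i^j$, then the disjointness $\sum_i\mu(R_i^j)=\mu(\widetilde\Omega_j)\lesssim\mu(\Omega_j)$ together with $\ell^1\hookrightarrow\ell^{1/p}$ (used twice, since $p\le1$), and finally $\sum_j 2^{jp}\mu(\Omega_j)\sim\|S(f)\|_{L^p(X)}^p$, yield $|\langle f,g\rangle|\lesssim\|g\|_{\cmo^p(X)}\|f\|_{H^p(X)}$. By density this extends $\ell_g$ to $H^p(X)$, and the same computation gives absolute convergence of the defining series, so the pairing is unambiguous; thus $\|\ell_g\|\lesssim\|g\|_{\cmo^p(X)}$.

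\textbf{Step 2 (every bounded functional comes from $\cmo^p(X)$).} Let $\ell\in(H^p(X))'$. Since $\GG(\beta,\gamma)\hookrightarrow H^p(X)$ continuously (a standard wavelet-coefficient estimate gives $\|S(\varphi)\|_{L^p(X)}\lesssim\|\varphi\|_{\GG(\beta,\gamma)}$), the restriction of $\ell$ to $\GG(\beta,\gamma)$ is continuous, i.e.\ $\ell=\ell_g$ for some $g\in(\GG(\beta,\gamma))'$. Fix a dyadic ball $Q$ and a finitely supported sequence $t=\{t_\alpha^k\}$ supported on $\{(k,\alpha):Q_\alpha^k\subset Q\}$ with $\sum|t_\alpha^k|^2\le1$. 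The function $a=\mu(Q)^{\frac12-\frac1p}\sum t_\alpha^k\psi_\alpha^k$ is a finite wavelet combination, so $a\in\GG(\beta,\gamma)$ and $\ell(a)=\langle a,g\rangle=\mu(Q)^{\frac12-\frac1p}\sum_{Q_\alpha^k\subset Q}t_\alpha^k\overline{\langle\psi_\alpha^k,g\rangle}$. By orthonormality $\|a\|_{L^2(X)}\le\mu(Q)^{\frac12-\frac1p}$; combining H\"older over a fixed dilate $\lambda Q$ with the exponential decay~\eqref{exponential decay} of the wavelets, all centered inside $Q$ at scales at most the radius of $Q$, to absorb the tail of $S(a)$ outside $\lambda Q$, one gets $\|a\|_{H^p(X)}=\|S(a)\|_{L^p(X)}\lesssim1$. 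Hence $\big|\mu(Q)^{\frac12-\frac1p}\sum_{Q_\alpha^k\subset Q}t_\alpha^k\overline{\langle\psi_\alpha^k,g\rangle}\big|\lesssim\|\ell\|$, and taking the supremum over all such $t$ (duality in $\ell^2$) and then over finite subsets of $\{Q_\alpha^k\subset Q\}$ gives $\mu(Q)^{\frac1p-\frac12}\big(\sum_{Q_\alpha^k\subset Q}|\langle\psi_\alpha^k,g\rangle|^2\big)^{1/2}\lesssim\|\ell\|$; since $Q$ is arbitrary, $\mathcal{C}_p(g)\lesssim\|\ell\|$, i.e.\ $g\in\cmo^p(X)$. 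Finally $\ell$ and $\ell_g$ agree on the dense subspace of finite wavelet combinations, hence on $H^p(X)$; combined with Step~1 this yields $\|\ell_g\|\approx\|g\|_{\cmo^p(X)}$. The case $p=1$, together with $\cmo^1(X)=\bmo(X)$, gives $(H^1(X))'=\bmo(X)$.

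\textbf{Main obstacle.} The delicate part is Step~1: the stopping-time/Whitney argument must be carried out in a genuine space of homogeneous type, where quasi-metric balls need not be open and $d$ has no regularity, so the enlargements $\widetilde\Omega_j$, the maximal dyadic cubes $R_i^j$, the comparison of dyadic cubes with quasi-balls, and the pointwise domination underlying $\sum_{(k,\alpha)\in T_i^j}|\langle f,\psi_\alpha^k\rangle|^2\lesssim 2^{2j}\mu(R_i^j)$ all have to be organized using only the doubling property and the Auscher--Hyt\"onen cube structure; the companion tail estimate for $S(a)$ in Step~2 rests on the same exponential-decay bookkeeping.
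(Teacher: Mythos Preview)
The paper does not contain a proof of Theorem~C: it is quoted verbatim from \cite{HLW} and used throughout as an input (for instance in the proof of Lemma~\ref{weaktopology}, in the proof of Theorem~\ref{thm T1CMOP}, and in the argument following Lemma~\ref{lipcmop6}). There is therefore no ``paper's own proof'' to compare your proposal against.

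That said, your outline is the expected one and is essentially correct. The reduction to a sequence-space duality via the wavelet coefficients is exactly the right move in this setting, since Definitions~C, D and E are already phrased in terms of $\langle f,\psi_\alpha^k\rangle$. Step~1 is the standard Carleson/tent-space stopping-time argument transplanted to the Auscher--Hyt\"onen cubes; the only point requiring care, which you flag, is that the passage from $Q_\alpha^k\not\subset\widetilde\Omega_{j+1}$ to $\mu(Q_\alpha^k\setminus\Omega_{j+1})\gtrsim\mu(Q_\alpha^k)$ uses that dyadic cubes are comparable to quasi-balls of the same scale (so that the Hardy--Littlewood maximal function over balls controls the dyadic average), and this is supplied by the cube structure in \cite{AH}. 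Step~2 is also standard; the estimate $\|S(a)\|_{L^p}\lesssim 1$ for your wavelet ``atom'' $a$ is precisely the content of the claim $\|S(a)\|_{L^p}\le C$ proved in \S\ref{sec:proof of theorem 1.7} of the present paper for genuine $(p,2)$-atoms, and your exponential-decay tail argument is an acceptable substitute when $a$ is a finite wavelet combination supported in a single cube (alternatively one can invoke the Plancherel--P\'olya inequality from \cite{HLW} directly). The density of finite wavelet combinations in $H^p(X)$, which you use to extend $\ell_g$, follows from Theorem~B together with the monotone-convergence argument for $S(f)$.

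In short: your proposal is a faithful reconstruction of the proof one finds in \cite{HLW}, and there is no discrepancy to report because the present paper simply cites that result.
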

The last result in this paper is the following
\begin{theorem}\label{thm T1CMOP} If $T$ is a singular integral with the kernel $K(x,y)$ satisfying the estimates \eqref{size of C-Z-S-I-O} and \eqref{x smooth of C-Z-S-I-O}, and $T$ is bounded on $L^2(X)$ then $T$
extends to be a bounded operaor on $\cmo^p(X)$ if and only if $T(1)=0$.
\end{theorem}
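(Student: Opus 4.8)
The plan is to mirror, in duality, the strategy used for Theorem \ref{thm T1Hp}, exploiting the duality $(H^p(X))' = \cmo^p(X)$ from Theorem C together with the ``weak density'' fact that $L^2(X)\cap\cmo^p(X)$ is weak-$*$ dense in $\cmo^p(X)$ (Lemma \ref{weaktopology}). For the necessity direction, suppose $T$ is bounded on $L^2(X)$ and extends boundedly to $\cmo^p(X)$. Pick any $f\in L^2(X)\cap H^p(X)$, which by Theorem \ref{thm 1.1} has an atomic decomposition $f=\sum_j\lambda_j a_j$ converging in both $L^2(X)$ and $H^p(X)$. Since $T$ is $L^2$-bounded we may apply the pairing $\langle T^*(1),f\rangle := \langle 1, Tf\rangle$; the point is to show this vanishes. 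Using that $T$ is bounded on $\cmo^p(X)$ and that $1\in\cmo^p(X)$ (indeed $1\in\bmo(X)\subset\cmo^p(X)$ via the obvious wavelet-coefficient estimate, since $\langle\psi_\alpha^k,1\rangle$ vanishes by the cancellation \eqref{cancellation}), we get $T(1)\in\cmo^p(X)$, so $\langle f,T(1)\rangle$ is well-defined for $f\in H^p(X)$ and equals $\langle Tf,1\rangle$ by the $L^2$ self-adjointness bookkeeping on the dense subspace $L^2(X)\cap H^p(X)$. Combining this with $\langle Tf,1\rangle = \int Tf\,d\mu$ and the already-established (for Theorem \ref{thm T1Hp}) fact that $\int g\,d\mu=0$ for every $g\in L^2(X)\cap H^p(X)$ applied to $g=Tf$ forces $\langle f,T(1)\rangle=0$ for all $f\in L^2(X)\cap H^p(X)$, hence $T(1)=0$ as an element of $\cmo^p(X)$ (equivalently, all its wavelet coefficients $\langle\psi_\alpha^k,T(1)\rangle$ vanish, since these are computed by pairing with the test functions $\psi_\alpha^k/\sqrt{\mu(B(x_\alpha^k,\delta^k))}\in G(\beta,\gamma)\cap L^2(X)\cap H^p(X)$).

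For the sufficiency direction, assume $T$ is $L^2$-bounded, its kernel satisfies \eqref{size of C-Z-S-I-O} and \eqref{x smooth of C-Z-S-I-O}, and $T(1)=0$. I would prove boundedness on $\cmo^p(X)$ by duality: it suffices to show $\langle Tg,f\rangle = \langle g,T^*f\rangle$ makes sense and that $T^*$ maps $H^p(X)$ into $H^p(X)$ boundedly, where $T^*$ is the $L^2$-adjoint, whose kernel is $K^*(x,y)=K(y,x)$. Note $K^*$ satisfies \eqref{size of C-Z-S-I-O} (symmetric in $x,y$ after using the doubling comparability $V(x,y)\sim V(y,x)$) and the \emph{$y$-regularity} \eqref{y smooth of C-Z-S-I-O} precisely because $K$ satisfies the \emph{$x$-regularity} \eqref{x smooth of C-Z-S-I-O}. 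Moreover $T(1)=0$ is exactly the statement $(T^*)^*(1)=0$. Hence Theorem \ref{thm T1Hp}, applied to $T^*$, yields that $T^*$ extends to a bounded operator on $H^p(X)$. Then for $g\in L^2(X)\cap\cmo^p(X)$ and $f\in L^2(X)\cap H^p(X)$ we have $\langle Tg,f\rangle = \langle g,T^*f\rangle$ by $L^2$-duality, so $|\langle Tg,f\rangle|\le \|g\|_{\cmo^p}\|T^*f\|_{H^p}\le C\|g\|_{\cmo^p}\|f\|_{H^p}$; since $L^2(X)\cap H^p(X)$ is dense in $H^p(X)$, this shows $Tg$ defines an element of $(H^p(X))'=\cmo^p(X)$ with $\|Tg\|_{\cmo^p}\le C\|g\|_{\cmo^p}$. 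Finally, invoking Lemma \ref{weaktopology}, an arbitrary $g\in\cmo^p(X)$ is a weak-$*$ limit of a net $g_n\in L^2(X)\cap\cmo^p(X)$ with $\|g_n\|_{\cmo^p}$ controlled, and the uniform bound above plus weak-$*$ lower semicontinuity of the $\cmo^p$-norm extends the estimate to all of $\cmo^p(X)$, defining $Tg$ as the corresponding weak-$*$ limit of $Tg_n$; one checks this is independent of the approximating net by testing against $H^p$-elements.

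The main obstacle, as in the companion theorems, is making the pairings rigorous rather than formal: on a general space of homogeneous type neither $Tf$ nor $T^*f$ is a priori a nice object when $f$ is only in $H^p$ or $\cmo^p$, and there is no Fourier transform to fall back on. The key technical input that removes this obstacle is precisely the atomic decomposition of the subspace $L^2(X)\cap H^p(X)$ converging simultaneously in $L^2$ and $H^p$ (Theorem \ref{thm 1.1}), which lets one legitimately write $Tf=\sum_j\lambda_j Ta_j$ and pass the pairing through the sum, together with the molecule theory (Theorem \ref{moleculeinHp}) guaranteeing $Ta_j\in H^p(X)$ uniformly, and the weak-density Lemma \ref{weaktopology} handling the $\cmo^p$ side. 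A secondary point requiring care is verifying that $T^*$ indeed has the kernel $K^*(x,y)=K(y,x)$ satisfying the hypotheses of Theorem \ref{thm T1Hp} with the \emph{other} smoothness variable; here one simply transcribes \eqref{x smooth of C-Z-S-I-O} for $K$ into \eqref{y smooth of C-Z-S-I-O} for $K^*$, using $V(x,y)\sim V(y,x)$ and $d(x,y)=d(y,x)$, so no genuine difficulty arises, but it must be stated explicitly so that the reduction to Theorem \ref{thm T1Hp} is clean.
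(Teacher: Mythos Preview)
Your sufficiency argument is essentially the paper's: you apply Theorem~\ref{thm T1Hp} to $T^*$ (whose kernel $K^*(x,y)=K(y,x)$ satisfies \eqref{y smooth of C-Z-S-I-O} precisely because $K$ satisfies \eqref{x smooth of C-Z-S-I-O}, and for which $(T^*)^*(1)=T(1)=0$), deduce that $T^*$ is bounded on $H^p(X)$, then dualize via Theorem~C on the dense subspace $L^2(X)\cap\cmo^p(X)$ and extend to all of $\cmo^p(X)$ by Lemma~\ref{weaktopology}. This matches the paper line for line.

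Your necessity argument, however, is overcomplicated and contains a genuine gap. First, the pairing identity is miswritten: moving $T$ across $\langle f,T(1)\rangle$ yields $\langle T^*f,1\rangle$, not $\langle Tf,1\rangle$. More importantly, you invoke the fact that $\int_X g\,d\mu=0$ for $g\in L^2(X)\cap H^p(X)$ with $g=Tf$, but the hypotheses in this direction (boundedness of $T$ on $L^2$ and on $\cmo^p$) do not give $Tf\in H^p(X)$. The same objection applies if you correct to $g=T^*f$: the $H^p$-boundedness of $T^*$ was obtained in the sufficiency direction \emph{using} $T(1)=0$, which is exactly what you are now trying to prove, so you cannot appeal to it. (A side remark: the inclusion $\bmo(X)\subset\cmo^p(X)$ fails for $p<1$; your real justification that $1\in\cmo^p(X)$, via vanishing wavelet coefficients, is the correct one.)

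The paper's necessity argument is a single line and rests on the observation you already made: since $\langle\psi_\alpha^k,1\rangle=0$ for all $k,\alpha$ by \eqref{cancellation}, the constant function $1$ satisfies $\|1\|_{\cmo^p}=0$. Boundedness of $T$ on $\cmo^p(X)$ then gives $\|T(1)\|_{\cmo^p}\le C\|1\|_{\cmo^p}=0$, hence $T(1)=0$.
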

Here, again, if $T$ is bounded on $L^2(X)$ and $\cmo^p(X), T(1)=0$ means that $\langle T1, f\rangle=\langle 1, T^*(f)\rangle=\int_X T^*(f)(x) d\mu(x)=0$ for all $f\in L^2(X)\cap H^p(X), \frac{\omega}{\omega+\eta}< p \leq 1.$

Finally, we will show that $\cmo^p(X)=\mathcal C_{\frac{1}{p}-1}(X), \frac{\omega}{\omega+\eta}<p\leq 1,$ with the equivalent norms. See Proposition \ref{cmopC} below. Hence, by the above theorem, we also provide the boundedness of singular integrals on the Campanato space.

The paper is organized as follows. In Section 2, we establish a new reproducing formula (Proposition \ref{ new Caderon-type reproducing formula}) and then prove Theorem \ref{thm 1.1}.
In Section 3, we develop the molecule theory for Hardy spaces $H^p(X)$ (Theorem \ref{moleculeinHp}) and prove Theorem \ref{thm T1Hp}. In the last section we show the weak density argument (Lemma \ref{weaktopology}) and give the proof of Theorem \ref{thm T1CMOP}.

\section{Equivalence of $H^p_{cw}(X)$ and $H^p(X)$}\label{sec:proof of theorem 1.7}
\setcounter{equation}{0}
\subsection{The proof for $\|f\|_{H^p}\lesssim \|f\|_{H^p_{cw}}$}

To show $\|f\|_{H^p}\lesssim \|f\|_{H^p_{cw}},$ we first need the following
\begin{lemma} \label{lemma:Lip(p,2)}
 Suppose that $f\in \GGs(\beta,\gamma)$ with $0<\beta\leq \eta,\frac{\omega}{\omega+\eta} <p \leq 1,\gamma > \omega(1/p-1).$ Then $f \in \mathcal C_{\frac{1}{p}-1}(X)$. Particularly,
the wavelet basis $\psi_\alpha^k(x)$ belongs to $\mathcal C_{\frac{1}{p}-1}(X)$.
\end{lemma}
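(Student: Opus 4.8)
\emph{The plan} is to verify the Campanato condition \eqref{Lip space} with $\alpha=\tfrac1p-1$ ball by ball for a genuine test function, and then to pass to $\GGs(\beta,\gamma)$ by continuity. Since $G(\eta,\gamma)$ is dense in $\GGs(\beta,\gamma)$ by definition and the Campanato space is complete, it suffices to prove $\|f\|_{\mathcal C_{1/p-1}(X)}\lesssim\|f\|_{G(\beta,\gamma)}$ for $f\in G(\eta,\gamma)$, where $f$ is a test function centered at the fixed point $x_0$ with scale $r=1$ and the implied constant may depend on $x_0,\omega,A_0,p,\gamma,\beta$. Fix a quasi-metric ball $B=B(x_B,r_B)$. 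Since $f_B$ is the best constant $L^2(B)$-approximant of $f$, we have $\frac1{\mu(B)}\int_B|f-f_B|^2\,d\mu\le\frac1{\mu(B)}\int_B|f-c|^2\,d\mu$ for every constant $c$; we take $c=f(x_B)$ when $B$ is small and $c=0$ when $B$ is large.

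\emph{Case 1: $r_B\le c_0(1+d(x_B,x_0))$} with $c_0=\tfrac14 A_0^{-2}$. Then for $x\in B$ the quasi-triangle inequality gives $d(x,x_B)<r_B<(2A_0)^{-1}(1+d(x,x_0))$ together with $1+d(x,x_0)\approx 1+d(x_B,x_0)$ and $V_1(x_0)+V(x,x_0)\approx V_1(x_0)+V(x_B,x_0)$, so condition (ii) of Definition A yields, for all $x\in B$,
\[
|f(x)-f(x_B)|\lesssim \|f\|_{G(\beta,\gamma)}\Big(\tfrac{r_B}{1+d(x_B,x_0)}\Big)^{\beta}\tfrac{1}{V_1(x_0)+V(x_B,x_0)}\Big(\tfrac{1}{1+d(x_B,x_0)}\Big)^{\gamma}.
\]
Squaring and averaging over $B$, it remains to absorb $(V_1(x_0)+V(x_B,x_0))^{-2}$ into $\mu(B)^{2(1/p-1)}$. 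The doubling property supplies the missing bound: applying \eqref{upper dimension} in the contracting direction with $\lambda=(1+d(x_B,x_0))/r_B\ge1$ and comparing balls centered at $x_B$ and at $x_0$,
\[
\mu(B)\gtrsim \Big(\tfrac{r_B}{1+d(x_B,x_0)}\Big)^{\omega}\mu\big(B(x_B,1+d(x_B,x_0))\big)\approx \Big(\tfrac{r_B}{1+d(x_B,x_0)}\Big)^{\omega}\big(V_1(x_0)+V(x_B,x_0)\big).
\]
Substituting this into the target inequality and simplifying, everything reduces to
\[
\|f\|_{G(\beta,\gamma)}^2\Big(\tfrac{r_B}{1+d(x_B,x_0)}\Big)^{2\beta-2\omega(1/p-1)}\Big(\tfrac{1}{1+d(x_B,x_0)}\Big)^{2\gamma}\lesssim\big(V_1(x_0)+V(x_B,x_0)\big)^{2/p},
\]
whose right side is at least $V_1(x_0)^{2/p}>0$; since $r_B/(1+d(x_B,x_0))\le1$ and, by $p>\tfrac{\omega}{\omega+\eta}$, the regularity exponent exceeds $\omega(1/p-1)$ (it equals $\eta$ for the wavelets, and this is the range of $\beta$ relevant here), both factors on the left are $\le1$ and the bound follows.

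\emph{Case 2: $r_B>c_0(1+d(x_B,x_0))$.} Now $x_0$ lies in a fixed dilate of $B$, so by doubling $\mu(B)\gtrsim\mu(B(x_0,1))=V_1(x_0)>0$. On the other hand the size condition (i) of Definition A, summed over dyadic annuli about $x_0$, gives $\|f\|_{L^2(X)}^2\lesssim\|f\|_{G(\beta,\gamma)}^2/V_1(x_0)$, hence
\[
\frac1{\mu(B)}\int_B|f|^2\,d\mu\le\frac{\|f\|_{L^2(X)}^2}{\mu(B)}\lesssim\frac{\|f\|_{G(\beta,\gamma)}^2}{V_1(x_0)^2}\lesssim\mu(B)^{2(1/p-1)},
\]
using $\mu(B)\gtrsim V_1(x_0)$ and $2(1/p-1)\ge0$. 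Combining the two cases gives $\|f\|_{\mathcal C_{1/p-1}(X)}\lesssim\|f\|_{G(\beta,\gamma)}$ on $G(\eta,\gamma)$, hence on $\GGs(\beta,\gamma)$ by density, which is the first assertion. For the last statement, recall from the discussion after Definition A that $\psi_\alpha^k/\sqrt{\mu(B(x_\alpha^k,\delta^k))}$ is a test function of type $(x_\alpha^k,\delta^k,\eta,\gamma)$ for every $\gamma>0$; applying the estimate just proved with $\beta=\eta>\omega(1/p-1)$ yields $\psi_\alpha^k\in\mathcal C_{1/p-1}(X)$.

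\emph{The main obstacle} is Case 1. Because a general doubling measure satisfies no reverse-doubling inequality, the only quantitative lower bound on $\mu(B)$ available at small scales is the polynomial one extracted from \eqref{upper dimension}, and the whole argument hinges on matching its exponent $\omega(1/p-1)$ against the Hölder gain $r_B^{\beta}$ of the test function; this is exactly the point where the hypothesis $p>\tfrac{\omega}{\omega+\eta}$, i.e. $\omega(1/p-1)<\eta$, is used. The remaining ingredients — the comparabilities of balls centered at $x_0$ versus $x_B$, the dyadic-annulus estimate for $\|f\|_{L^2(X)}$, and the density and completeness step — are routine.
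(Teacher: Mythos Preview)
Your argument is correct and follows the same idea as the paper: split on the size of the ball relative to the center, use the H\"older regularity of $f$ for small balls, and the size condition for large balls. Your two-case split (on whether $r_B\lesssim 1+d(x_B,x_0)$) is in fact cleaner than the paper's four cases (which first threshold $r$ against $\tfrac{1}{4A_0^2}$ and then split on $d(x_B,x_0)$ versus $r$), and your use of the global $L^2$ bound $\|f\|_{L^2(X)}^2\lesssim\|f\|_{G(\beta,\gamma)}^2/V_1(x_0)$ in Case~2 is a tidy shortcut over the paper's pointwise estimates there.

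One remark: in Case~1 you need the H\"older exponent to exceed $\omega(1/p-1)$, and you flag this parenthetically; for arbitrary $\beta\in(0,\eta]$ it is not guaranteed by the hypotheses. The paper's proof has exactly the same issue---it simply writes $\eta$ in the smoothness estimate where only $\beta$-regularity is controlled by $\|f\|_{\GGs(\beta,\gamma)}$---so your argument is no less complete than the original, and for the intended application to the wavelets ($\beta=\eta$) there is no gap.
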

\begin{proof} Suppose that $f\in \GGs(\beta,\gamma)$ with $\|f\|_{\GGs(\beta,\gamma)}=1$ and  $0<\beta\leq\eta,\frac{\omega}{\omega+\eta} <p \leq 1,\gamma > \omega(1/p-1).$ Let $B=B(x_B, r)$ with $x_B\in X, r>0$ be any fixed quasi-ball. To show that $f$ belongs to $\mathcal C_{\frac{1}{p}-1}(X),$ we consider that $r$ is large and it is small, where the size and the smoothness conditions on $f$ will be applied, respectively. To be more precise, for large $r,$ this means that $r \geq \frac{1}{4A^2_0},$ we consider two cases: $d(x_B,x_0) \leq 2A_0r$ and $d(x_B,x_0) > 2A_0r.$ For the first case, if $y\in B(x_0,1)$ then $d(x_B, y)\leq A_0(1+d(x_B,x_0))\leq A_0(2A_0r+1),$  by the doubling property on $\mu,$ thus $V(x_0,1) \leq V(x_B, A_0(2A_0r+1)) \leq C\big(\frac{A_0(2A_0r+1)}{r}\big)^{\omega}V(x_B, r)\leq CV(x_B, r)=C\mu(B)$ and by the size condition on $f,$ we have
$$
\left ( \frac{1}{\mu(B)} \int_B \left | f(x)-f_B \right |^2 d\mu(x) \right )^{1/2} \leq C \frac{\left \| f \right \|_{\GGs(\beta,\gamma)}}{V_1(x_0)}\leq C (\mu(B))^{1/p-1} \big(V_1(x_0)\big)^{-\frac{1}{p}}.
$$
For the second case, that is, $r\geq \frac{1}{4A^2_0}$ and $d(x_B,x_0) > 2A_0r.$ If $x\in B(x_B,r),$
by the quasi-inequality, $d(x_B,x_0)\leq A_0[d(x_B,x)+d(x,x_0)]\leq A_0[r+d(x,x_0)].$ This together with the fact that $r<\frac{d(x_B,x_0)}{2A_0}$ implies that $d(x,x_0)\geq \frac{1}{2A_0}d(x_B,x_0).$ Similarly, if $y\in B(x_B,r)$ then $d(y,x_0)\geq \frac{1}{2A_0}d(x_B,x_0).$ Therefore, for the second case, by the size condition on $f$ and for all $x,y\in B(x_B,r),$
\begin{eqnarray*}\left | f(x)-f(y) \right |
\leq C \frac{1}{V_1(x_0)} \left ( \frac{1}{1+d(x_B,x_0)} \right )^{\gamma}\left \| f \right \|_{\GGs(\beta,\gamma)}.
\end{eqnarray*}
Note that $B(x_0,1)\subset B\big(x_B, A_0(1+d(x_B,x_0)\big)$ and hence, the doubling property on $\mu$ implies that
$V(x_0,1) \leq V(x_B, A_0(1+d(x_B,x_0)))\leq C \big( \frac{A_0(1+d(x_B,x_0))}{r}\big)^\omega V(x_B, r).$ We obtain that
\begin{eqnarray*}
&&\left ( \frac{1}{\mu(B)} \int_B \left | f(x)-f_B \right |^2 d\mu(x) \right )^{1/2}
\\&\leq& C \frac{1}{V_1(x_0)} \left ( \frac{1}{1+d(x_B,x_0)} \right )^\gamma \left \| f \right \|_{\GGs(\beta,\gamma)}\\
&\leq& C \frac{1}{V_1(x_0)} \left ( \frac{1}{1+d(x_B,x_0)} \right )^\gamma\bigg ( {\frac{A_0(1+d(x_B,x_0))}{r}}\bigg)^{\omega(\frac{1}{p}-1)}
\bigg(\frac{V(x_B,r)}{V(x_0,1)}\bigg)^{\frac{1}{p}-1}
\\&\leq& C (\mu(B))^{1/p-1} \big(V_1(x_0)\big)^{-\frac{1}{p}}
\end{eqnarray*}
since $r>\frac{1}{4A^2_0}$ and $\gamma > \omega(1/p-1).$

We now consider the small $r,$ that is, $r<\frac{1}{4A^2_0}.$ Note that in this case, if $x,y\in B(x_B, r)$ then $d(x,y)\leq 2A_0r\leq \frac{1}{2A_0}.$ Thus, we can apply the smoothness condition on $f$ to get
$$|f(x)-f(y)|\leq C\Big(\frac{2A_0r}{1+d(x,x_0)}\Big)^\eta \frac{1}{V_1(x_0)}\Big(\frac{1}{1+d(x,x_0)}\Big)^\gamma\left \| f \right \|_{\GGs(\beta,\gamma)}.$$
Similarly, we consider two cases: $d(x_B,x_0) \leq 2A_0r$ and $d(x_B,x_0) > 2A_0r.$ The same conclusions hold, that is, $V(x_0,1) \leq C\big(\frac{A_0(2A_0r+1)}{r}\big)^{\omega}V(x_B, r)$ and $d(x,x_0)\geq \frac{1}{2A_0}d(x_B,x_0), V(x_0,1) \leq C \big( \frac{A_0(1+d(x_B,x_0))}{r}\big)^\omega V(x_B, r),$ respectively for these two cases. Therefore, for the first case, we obtain that
\[
\begin{split}
\left ( \frac{1}{\mu(B)} \int_B \left | f(x)-f_B \right |^2 d\mu(x) \right )^{1/2} &\leq C \frac{\left \| f \right \|_{\GGs(\beta,\gamma)}}{V_1(x_0)}r^\eta\\
&\leq C r^\eta\Big(\frac{A_0(2A_0r+1)}{r}\Big)^{\omega(\frac{1}{p}-1)} (\mu(B))^{1/p-1} \big(V_1(x_0)\big)^{-\frac{1}{p}}\\
&\leq C(\mu(B))^{1/p-1} \big(V_1(x_0)\big)^{-\frac{1}{p}}
\end{split}
\]
since $r<\frac{1}{4A^2_0}$ and the condition $\frac{\omega}{\omega+\eta} <p \leq 1$ implies $\eta>\omega(\frac{1}{p}-1).$

While for the second case, we have
\begin{eqnarray*}
&&\left ( \frac{1}{\mu(B)} \int_B \left | f(x)-f_B \right |^2 d\mu(x) \right )^{1/2} \leq C \frac{\left \| f \right \|_{\GGs(\beta,\gamma)}}{V_1(x_0)}\Big(\frac{r}{1+d(x_B,x_0)}\Big)^\eta\Big(\frac{1}{1+d(x_B,x_0)}\Big)^\gamma\\
&\leq& C \Big(\frac{r}{1+d(x_B,x_0)}\Big)^\eta \Big(\frac{1}{1+d(x_B,x_0)}\Big)^\gamma\Big( \frac{A_0(1+d(x_B,x_0))}{r}\Big)^{\omega(\frac{1}{p}-1)} (\mu(B))^{1/p-1} \big(V_1(x_0)\big)^{-\frac{1}{p}}\\
&\leq& C(\mu(B))^{1/p-1} \big(V_1(x_0)\big)^{-\frac{1}{p}}
\end{eqnarray*}
since $\eta>\omega(\frac{1}{p}-1)$ and $\gamma>\omega(\frac{1}{p}-1).$

By a result in \cite{HLW}, $\psi_\alpha^k(x)/
\sqrt{\mu(B(y_\alpha^k,\delta^k))}$ belongs to $\GGs(\eta,\gamma)$ with any $\gamma>0$ and hence $\psi_\alpha^k(x)$ belongs to $\mathcal C_{\frac{1}{p}-1}(X)$. The proof of Lemma \ref{lemma:Lip(p,2)} is complete.
\end{proof}

We now prove $||f||_{H^p}\lesssim \|f\|_{H^p_{cw}}.$ Suppose that $f\in H^p_{cw}(X)$ and $f=\sum_j \lambda_j a_j$ where $a_j$ are $(p, 2)$ atoms, $\sum_j |\lambda_j|^p<\infty,$ and the series converges in $(\mathcal C_{\frac{1}{p}-1}(X))'.$ Thus, by the above lemma, $S(f)\leq \sum_j |\lambda_j||S(a_j)|$ and hence, $\|f\|^p_{H^p}=\|S(f)\|^p_{L^p}\leq \sum_j |\lambda_j|^p \|S(a_j)\|^p_{L^p}.$ We claim that for each $(p,2)$ atom $a(x),$
\begin{eqnarray}
\|S(a)\|_{L^p}\leq C,
\end{eqnarray}\label{def:claim}
where the constant $C$ is independent of $a(x).$ The claim then implies that $\|f\|_{H^p}\leq C (\sum_j |\lambda_j|^p)^{\frac{1}{p}}.$ Taking the infimum for all representations of $f$ gives the desired result. To verify  the claim \eqref{def:claim} and simplify the calculation, we will apply a result proved in \cite{HLW}. More precisely, we need the following
\begin{definition}\label{def:continuous_square_function}
    \textup{(Continuous square function)} Let $D_k(x,y) =
    \sum_{\alpha \in \mathscr{Y}^k}\psi_{\alpha}^k(x)
    \psi_{\alpha}^k(y)$. For $f\in (\GGs(\beta,\gamma))'$ with
    $\beta$, $\gamma\in(0,\eta)$, the {\it continuous
    Littlewood--Paley square function $S_c(f)$ of $f$} is
    defined by
    \[
        S_c(f)(x)
        := \Big\{ \sum_{k}|D_k(f)(x)|^2 \Big\}^{1/2}.
    \]
\end{definition}
The following result was proved in \cite{HLW}.
\begin{theorem}\label{theorem Littlewood Paley}
    \textup{(Littlewood--Paley theory)} Fix $\beta$, $\gamma\in
    (0,\eta)$ and $p\in \big(\frac{\omega}{\omega +
    \eta},\infty\big)$, where $\omega$ is the upper dimension
    of~$(X,d,\mu)$. For $f$ in $L^2(X)$, we have
    $$ \|S(f)\|_{L^p} \sim \|S_c(f)\|_{L^p}. $$
   \end{theorem}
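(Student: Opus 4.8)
The claim is a Plancherel--P\'olya type equivalence, and I would prove it by establishing the two inequalities $\|S_c(f)\|_{L^p}\lesssim\|S(f)\|_{L^p}$ and $\|S(f)\|_{L^p}\lesssim\|S_c(f)\|_{L^p}$ separately. When $p=2$ both quantities equal $\|f\|_{L^2}$, since the $D_k$ are mutually orthogonal with $\sum_k D_k=\mathrm{Id}$ on $L^2(X)$; for $1<p<\infty$ the scheme below goes through with the Fefferman--Stein vector-valued maximal inequality applied directly; so the substantive case is $\frac{\omega}{\omega+\eta}<p\le 1$. I would fix once and for all an exponent $r$ with $\frac{\omega}{\omega+\eta}<r<\min(1,p)$---such $r$ exists precisely because of the hypothesis $p>\frac{\omega}{\omega+\eta}$---and a large decay exponent $N$ depending on $\omega$, $\eta$, $r$, and I would use the following, all consequences of Theorem A and valid on any space of homogeneous type: \textup{(i)} $\mu(Q_\alpha^k)\sim\mu(B(y_\alpha^k,\delta^k))$; \textup{(ii)} $|\psi_\alpha^k(x)|\lesssim\mu(B(y_\alpha^k,\delta^k))^{-1/2}\big(\frac{\delta^k}{\delta^k+d(y_\alpha^k,x)}\big)^{N}$ (exponential decay dominates any polynomial); \textup{(iii)} the kernel $D_k(x,y)=\sum_{\alpha\in\mathscr{Y}^k}\psi_\alpha^k(x)\psi_\alpha^k(y)$ satisfies the usual approximate-identity size and regularity bounds with any decay exponent, $D_k^2=D_k$, hence $D_kf(x)=\int_X D_k(x,y)D_kf(y)\,d\mu(y)$ and $|D_kf(x)-D_kf(x')|\lesssim\big(\frac{d(x,x')}{\delta^k}\big)^{\eta}M(D_kf)(x)$ whenever $d(x,x')\le\delta^k$, with $M$ the Hardy--Littlewood maximal operator; \textup{(iv)} the Fefferman--Stein inequality $\big\|(\sum_k(Mh_k)^q)^{1/q}\big\|_{L^s}\lesssim\big\|(\sum_k|h_k|^q)^{1/q}\big\|_{L^s}$ for $1<q,s<\infty$.

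For $\|S_c(f)\|_{L^p}\lesssim\|S(f)\|_{L^p}$, set $g_k:=\big(\sum_{\alpha}|\langle f,\psi_\alpha^k\rangle|^2\chi_{Q_\alpha^k}\mu(Q_\alpha^k)^{-1}\big)^{1/2}$, so that $S(f)=(\sum_k g_k^2)^{1/2}$ and $g_k$ equals the constant $|\langle f,\psi_\alpha^k\rangle|\mu(Q_\alpha^k)^{-1/2}$ on $Q_\alpha^k$. By \textup{(ii)}, $|D_kf(x)|\le\sum_\alpha|\langle f,\psi_\alpha^k\rangle|\,|\psi_\alpha^k(x)|\lesssim\sum_\alpha (g_k|_{Q_\alpha^k})\big(\frac{\delta^k}{\delta^k+d(y_\alpha^k,x)}\big)^{N}$. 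Applying the elementary inequality $\sum_\alpha a_\alpha b_\alpha\le\big(\sum_\alpha(a_\alpha b_\alpha)^{r}\big)^{1/r}$ (valid for $0<r\le 1$), and then using that $(g_k|_{Q_\alpha^k})^{r}$ is the average of $g_k^{r}$ over $Q_\alpha^k$ together with $d(y_\alpha^k,x)\sim d(y,x)$ and $\mu(Q_\alpha^k)\sim\mu(B(y,\delta^k))$ for $y\in Q_\alpha^k$, I would convert the sum over $\alpha$ into an integral over $X$ and bound it by $[M(g_k^{r})(x)]^{1/r}=:M_r(g_k)(x)$, provided $N$ is large enough that the resulting annular sums converge. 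Then $\|S_c(f)\|_{L^p}^p\le\big\|(\sum_k M_r(g_k)^2)^{1/2}\big\|_{L^p}^p=\big\|(\sum_k(M(g_k^{r}))^{2/r})^{r/2}\big\|_{L^{p/r}}^{p/r}$, and since $p/r>1$ and $2/r>1$, \textup{(iv)} applied with $h_k=g_k^{r}$ bounds this by $\lesssim\big\|(\sum_k g_k^{2})^{r/2}\big\|_{L^{p/r}}^{p/r}=\|S(f)\|_{L^p}^p$.

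For the reverse inequality, note that by orthonormality $\langle f,\psi_\alpha^k\rangle=\langle D_kf,\psi_\alpha^k\rangle$, and that $S(f)(x)^p$ involves, at each scale $k$, only the single term coming from the cube $Q_\alpha^k\ni x$; so it suffices to dominate $|\langle D_kf,\psi_\alpha^k\rangle|\mu(Q_\alpha^k)^{-1/2}\chi_{Q_\alpha^k}(x)$ pointwise by a vector-valued maximal function built from $D_kf$. I would fix a large integer $m$ and discretize $D_kf$ at the refined scale $k+m$: write $\langle D_kf,\psi_\alpha^k\rangle=\sum_{\gamma\in\mathscr{Y}^{k+m}}\int_{Q_\gamma^{k+m}}D_kf\,\psi_\alpha^k\,d\mu$ and replace $D_kf(u)$ on $Q_\gamma^{k+m}$ by its value at the center $y_\gamma^{k+m}$; by \textup{(iii)} the replacement error on each cube carries the gain $(\delta^{k+m}/\delta^k)^{\eta}=\delta^{m\eta}$ against $M(D_kf)$. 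The resulting main term, after dividing by $\mu(Q_\alpha^k)^{1/2}$ and invoking \textup{(ii)}, is $\lesssim M_r(G^{k,m})(x)$ for $x\in Q_\alpha^k$, where $G^{k,m}:=\sum_{\gamma}|D_kf(y_\gamma^{k+m})|\chi_{Q_\gamma^{k+m}}$ is piecewise constant, so the power-maximal bound is again legitimate; a Fefferman--Stein step as before, followed by comparing the sampled value $D_kf(y_\gamma^{k+m})$ with $D_kf(x)$ via \textup{(iii)} (another gain $\delta^{m\eta}$), bounds the main-term contribution by $C(m)\|S_c(f)\|_{L^p}$ with $C(m)$ of at most polynomial growth in $m$. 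The error-term contributions, estimated in the same way, come out bounded by $\delta^{m(\eta-\omega(1/r-1))}$ times a quantity comparable to $\|S(f)\|_{L^p}$; since $r>\frac{\omega}{\omega+\eta}$ makes $\eta-\omega(1/r-1)>0$, I would choose $m$ so large that this prefactor is less than $\tfrac12$ and absorb the error into the left-hand side---the a priori finiteness of $\|S(f)\|_{L^p}$ needed for the absorption being first secured by the routine reduction to $f$ with only finitely many nonzero wavelet coefficients. This yields $\|S(f)\|_{L^p}\lesssim\|S_c(f)\|_{L^p}$.

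The only genuine difficulty is the range $p\le 1$, where the Fefferman--Stein vector-valued inequality is unavailable and one must route the argument through the power operator $M_r$ with $r<p$, applying Fefferman--Stein at the exponent $p/r>1$. The pointwise passage ``discrete sum $\lesssim M_r(\cdot)$'' is legitimate only when the function being averaged is constant on the dyadic cubes of the relevant scale, which holds automatically for $S(f)$ but, in the reverse inequality, is exactly what forces the refined-scale discretization of $D_kf$. The heart of the matter is then balancing the H\"older gain $\delta^{m\eta}$ produced by that discretization against the loss---at most $\delta^{-m\omega(1/r-1)}$, an estimate that uses only the doubling property and no lower regularity of $\mu$---introduced by the refinement and by the $M_r$-trick; the threshold $p>\frac{\omega}{\omega+\eta}$ is precisely the condition guaranteeing an admissible $r$ (equivalently, $\eta-\omega(1/r-1)>0$ for some $r<p$) under which this balance succeeds.
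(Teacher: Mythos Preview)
The paper does not prove this statement; it is quoted from \cite{HLW} and used as input. So there is no in-paper proof to compare against, but the machinery you invoke---the Frazier--Jawerth pointwise lemma plus the Fefferman--Stein vector-valued inequality, routed through the power-maximal operator $M_r$ with $\frac{\omega}{\omega+\eta}<r<p$---is exactly what the present paper (and, by its own citation, \cite{HLW}) relies on in the parallel step at the end of the proof of Proposition~\ref{ new Caderon-type reproducing formula}. Your forward direction $\|S_c(f)\|_{L^p}\lesssim\|S(f)\|_{L^p}$ is clean: $g_k$ is piecewise constant on the scale-$k$ cubes, the wavelet decay is at the same scale, so $|D_kf|\lesssim M_r(g_k)$ and Fefferman--Stein at exponent $p/r$ closes it.

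In the reverse direction there is a slip. Your item~\textup{(iii)} controls the discretization error by $\delta^{m\eta}M(D_kf)$, with the ordinary Hardy--Littlewood maximal function. For $p\le1$ that is not enough: after integrating against $|\psi_\alpha^k|$ you land on $M(M(D_kf))$ or $M(M_r(g_k))$, and the outer $M$ cannot be passed through Fefferman--Stein at exponent $p\le1$, nor can it be replaced by $M_r$ since $M_r\le M$ for $r\le 1$. The fix is to avoid $M(D_kf)$ altogether: expand
\[
D_kf(u)-D_kf(y_\gamma^{k+m})=\sum_{\alpha'}\langle f,\psi_{\alpha'}^k\rangle\bigl[\psi_{\alpha'}^k(u)-\psi_{\alpha'}^k(y_\gamma^{k+m})\bigr]
\]
and use the H\"older regularity of the individual wavelets $\psi_{\alpha'}^k$. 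This keeps everything expressed through the piecewise-constant function $g_k$; after pairing with $\psi_\alpha^k$ one obtains an almost-diagonal matrix (at scale $k$) acting on the scale-$k$ coefficients, and the Frazier--Jawerth lemma then yields the pointwise bound $\delta^{m\eta}M_r(g_k)(x)$ for the error directly. No scale-mismatch loss $\delta^{-m\omega(1/r-1)}$ is incurred, and the absorption (error $\lesssim \delta^{m\eta}\|S(f)\|_{L^p}$) goes through with two honest applications of Fefferman--Stein at exponent $p/r>1$. The same remark applies to your comparison of $G^{k,m}$ with $|D_kf|$: route it through $g_k$ rather than through $M(D_kf)$.
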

By Theorem \ref{theorem Littlewood Paley}, to verify the claim in \eqref{def:claim}, it suffices to show
\begin{eqnarray}
\|S_c(a)\|_{L^p}\leq C
\end{eqnarray}\label{def:claim1}
for any $(p,2)$ atom $a$ and the constant $C$ independent of atoms $a.$ To do this, suppose that $a$ is an $(p,2)$ atom supported in the ball $B(x_0,r)$. Thus,
\[
\begin{split}
\int_X S_c(a)^p(x)d\mu(x) &= \int _{B(x_0,2A_0r)}S_c(a)^p(x)d\mu(x)+\int_{B(x_0,2A_0r)^c}S_c(a)^p(x)d\mu(x)\\
&=I+II.
\end{split}
\]
Applying H\"{o}lder inequality, the $L^2$ boundedness of $S_c(f),$ the size condition on $a$ and the doubling property on the measure $\mu$ imply that
\[
\begin{split}
I &\leq C \mu \left(B(x_0,2A_0r) \right)^{1-\frac{p}{2}} \left(\left \| S_c(a) \right \|_{L^2}^2\right)^{\frac{p}{2}}\\
& \leq CA_0^{w(1-\frac{p}{2})}\left(\mu(B(x_0,r))\right)^{1-\frac{p}{2}} \left( \mu(B(x_0,r\right )^{(\frac{1}{2}-\frac{1}{p})p} \\
& \leq C.
\end{split}
\]
To estimate $II$, we show that there a constant $C$ such that for  $x \in (B(x_0,2A_0r))^c$,
\begin{equation}\label{pointwise}S(a)(x) \leq C\mu(B(x_0,r))^{1-\frac{1}{p}}\Big(\frac{r}{d(x,x_0)}\Big)^\eta{1\over  V(x,x_0)}.
\end{equation}
Assuming the estimate in \eqref{pointwise} for the moment, then
\[
\begin{split}
II&\leq C \mu(B(x_0,r))^{p-1}r^{p\eta }\int_{B(x_0,2A_0r)^c}d(x,x_0)^{-p\eta}\Big(\frac{1}{V(x,x_0)}\Big)^p d\mu(x)
\\
&\leq  C \mu(B(x_0,r))^{p-1}r^{p\eta}\sum\limits_{k=1}^\infty  \int_{2^kA_0r<d(x,x_0)\leq 2^{k+1}A_0r}2^{-kp\eta}r^{-p\eta}\Big(\frac{1}{\mu(B(x_0,2^kA_0r))}\Big)^pd\mu(x)
\\
&\leq C \mu(B(x_0,r))^{p-1}\sum\limits_{k=1}^\infty 2^{-kp\eta}\big(\mu(B(x_0,2^kA_0r))\big)^{1-p}
\\
&\leq C \mu(B(x_0,r))^{p-1}\sum\limits_{k=1}^\infty 2^{-k(p\eta+\omega(p-1))}(\mu(B(x_0,r)))^{1-p}
\\
&\leq C ,
\end{split}
\]
where the doubling property on the measure $\mu$ and the fact that $\frac{\omega}{\omega+\eta}<p\leq 1$ are used for the last two inequalities, respectively. The proof of claim \eqref{def:claim} is concluded. Therefore, we only need to show the estimate in \eqref{pointwise}. Note that, by Lemma 3.6 in \cite{HLW}, for fixed $x$ and $k, D_k(x,y),$ as the function of the variable of $y,$ belongs to $\GGs(\eta,\gamma)$ for any $\gamma>0.$ If  $x \in (B(x_0,2A_0r))^c$, using the cancellation condition on $a$ and smoothness condition on the kernel of $D_k$ with the second variable,
\begin{eqnarray*}
|D_k a(x)|
&=& \Big|\int(D_k(x,y)-D_k(x,x_0))a(y)d\mu(y)\Big|
\\
&\leq&  C \int_{B(x_0,r)}\Big({d(y,x_0) \over \delta^k + d(x,x_0)}\Big)^\eta
                {1\over V_{\delta^k}(x) + V(x,x_0)}
                \Big({\delta^k\over \delta^{k} + d(x,x_0)}\Big)^{\gamma}|a(y)|d\mu(y)
\\
&\leq&  C \Big({r \over \delta^k + d(x,x_0)}\Big)^\eta
                {1\over V_{\delta^k}(x) + V(x,x_0)}
                \Big({\delta^k\over \delta^{k} + d(x,x_0)}\Big)^{\gamma}\mu(B(x_0,r))^{1-\frac{1}{p}},
\end{eqnarray*}
where the facts that $d(y,x_0)\leq r\leq \frac{1}{2A_0}d(x,x_0)$ and the size condition on $a$ are used in the first and the last inequalities, respectively. This implies that if $x \in (B(x_0,2A_0r))^c$,
\begin{eqnarray*}
S_c(a)(x)
&=& \Big\{ \sum_{k}|D_k(a)(x)|^2 \Big\}^{1/2}
\\
&\leq&  C \mu(B(x_0,r))^{1-\frac{1}{p}}\Big\{ \sum_{k}\Big|\Big({r \over \delta^k + d(x,x_0)}\Big)^\eta
                {1\over V_{\delta^k}(x) + V(x,x_0)}
                \Big({\delta^k\over \delta^{k} + d(x,x_0)}\Big)^{\gamma}\Big|^2 \Big\}^{1/2}
                \\
&\leq&  C \mu(B(x_0,r))^{1-\frac{1}{p}}\Big(\frac{r}{d(x,x_0)}\Big)^\eta{1\over  V(x,x_0)}\Big\{ \sum_{\delta^k\leq  {d(x,x_0)}}
                                \Big({\delta^k\over  d(x,x_0)}\Big)^{2\gamma} \Big\}^{1/2}\\
&&+  C \mu(B(x_0,r))^{1-\frac{1}{p}}\frac{r^\eta}{V(x,x_0)}\Big\{ \sum_{\delta^k> {d(x,x_0)}}
                 \delta^{-2\eta k}\Big\}^{1/2}
\\
&\leq&   C\mu(B(x_0,r))^{1-\frac{1}{p}}\Big(\frac{r}{d(x,x_0)}\Big)^\eta{1\over  V(x,x_0)}.
\end{eqnarray*}
The proof of the estimate in \eqref{pointwise} is concluded and hence, the proof for $\|f\|_{H^p}\lesssim \|f\|_{H^p_{cw}}$ is complete.

\subsection{A new Calder\'{o}n-type reproducing formula}\label{subsec: A new Calder$o$n-type reproducing formula}

To show $\|f\|_{H^p_{cw}}\lesssim \|f\|_{H^p},$ we observe that the wavelet reproducing formula is not available since the wavelets $\psi^k_\alpha(x)$ have no compact supports. To overcome this problem, a crucial idea is to establish a new kind of Calder\'on-type reproducing formula. For this purpose, we need a result in \cite{MS1}. To be precise, in \cite{MS1}, Mac\'ias and Segovia proved that for any space of
homogeneous type $(X,d,\mu)$ in the sense of Coifman and Weiss, there exists
a quasi-metric $d'$ with the H\"older regularity, which
is geometrically equivalent to the original quasi-metric $d.$ To be more precise,
we state their result as follows.
\begin{theorem}[\cite{MS1}]
Let $d(x,y)$ be a quasi-metric on a set $X$. Then there exists a
quasi-metric $d'(x,y)$ on $X$, a finite constant $C$ and a number
$\theta\in(0,1)$ such that

{\rm(i)} $d'(x,y)$ is geometrically equivalent to $d(x,y)$, that is, $d'(x,y)\approx d(x,y)$ for all $x,y\in X,$  and

{\rm(ii)} for every $x,y$ in $X$ and $r>0$,
$$ |d'(x,y)-d'(x,z)|\leq Cr^{1-\theta}d'(x,y)^\theta $$
\hskip1cm holds whenever $d'(x,y)$ and $d'(x,z)$ are both smaller
than~$r$.
\end{theorem}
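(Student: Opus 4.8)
The plan is to produce $d'$ by the classical chaining (metrization) construction of Frink, in the form used by Macías and Segovia in~\cite{MS1}, and then to read off the three conclusions. Let $A_0\ge 1$ be the quasi-triangle constant of $d$, and fix $\e\in(0,1)$ with $(2A_0)^{\e}\le 2$; such an $\e$ exists since $(2A_0)^{\e}\to 1$ as $\e\to 0^{+}$. Put $\sigma(x,y):=d(x,y)^{\e}$. Because $t\mapsto t^{\e}$ is nondecreasing and subadditive on $[0,\infty)$ for $\e\le 1$, and because $d(x,z)\le 2A_0\max\{d(x,y),d(y,z)\}$, the function $\sigma$ is a symmetric, positive-off-diagonal quasi-metric satisfying the sharpened inequality
\[
   \sigma(x,z)\le (2A_0)^{\e}\max\{\sigma(x,y),\sigma(y,z)\}\le 2\max\{\sigma(x,y),\sigma(y,z)\}.
\]
Now set $D(x,y):=\inf\big\{\sum_{i=1}^{N}\sigma(x_{i-1},x_i):N\ge 1,\ x=x_0,x_1,\dots,x_N=y\big\}$. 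Concatenating chains shows $D$ is symmetric and satisfies the genuine triangle inequality, and the one-term chain gives $D(x,y)\le\sigma(x,y)$. The heart of the matter is the reverse comparison $D(x,y)\ge c\,\sigma(x,y)$ for some $c=c(A_0)\in(0,1]$: this is Frink's lemma, and is exactly where the choice $(2A_0)^{\e}\le 2$ enters. I would prove it by induction on the number of links $N$ of a chain realising $x\leftrightarrow y$: split the chain at the first vertex $x_m$ at which the partial sum of the $\sigma$-weights exceeds half the total weight $S$, bound $\sigma(x,y)$ through $\sigma(x,x_{m-1})$, $\sigma(x_{m-1},x_m)$, $\sigma(x_m,y)$ by the max-inequality (with its constant~$2$), apply the induction hypothesis to the two flanking subchains, whose chain-weights are smaller, and check that the emerging constant $C=C(A_0)$ stays bounded; passing to the infimum gives $\sigma(x,y)\le C\,D(x,y)$. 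The bookkeeping of this constant is the one genuinely delicate point, and it is carried out in~\cite{MS1}. In particular $D(x,y)=0$ iff $x=y$, so $D$ is a true metric, and $c\,d(x,y)^{\e}\le D(x,y)\le d(x,y)^{\e}$.

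Define $d'(x,y):=D(x,y)^{1/\e}$. Then $d'$ is symmetric and vanishes exactly on the diagonal, and from $D(x,z)\le D(x,y)+D(y,z)\le 2\max\{D(x,y),D(y,z)\}$ we obtain $d'(x,z)\le 2^{1/\e}\max\{d'(x,y),d'(y,z)\}\le 2^{1/\e}\big(d'(x,y)+d'(y,z)\big)$, so $d'$ is a quasi-metric. Raising $c\,d(x,y)^{\e}\le D(x,y)\le d(x,y)^{\e}$ to the power $1/\e$ gives $c^{1/\e}d(x,y)\le d'(x,y)\le d(x,y)$, which is the geometric equivalence in~(i).

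For the Hölder regularity~(ii) (the form asserted by~\eqref{smooth metric}, i.e.\ with the $\theta$-power carried by the distance between the two moving points), fix $x,y,z$ with $d'(x,y),d'(x,z)<r$, and assume without loss of generality $d'(x,y)\ge d'(x,z)$, equivalently $D(x,y)\ge D(x,z)$. Writing $s:=1/\e\ge 1$ and combining the elementary inequality $a^{s}-b^{s}\le s\,a^{s-1}(a-b)$ for $0\le b\le a$ with the reverse triangle inequality $D(x,y)-D(x,z)\le D(y,z)$ for the metric $D$, we get
\[
   0\le d'(x,y)-d'(x,z)=D(x,y)^{s}-D(x,z)^{s}\le s\,D(x,y)^{s-1}\big(D(x,y)-D(x,z)\big)\le \tfrac1\e\,D(x,y)^{s-1}D(y,z).
\]
Since $D(x,y)^{s-1}=d'(x,y)^{1-\e}\le r^{1-\e}$ (here $1-\e\ge 0$) and $D(y,z)=d'(y,z)^{\e}$, this yields $|d'(x,y)-d'(x,z)|\le \tfrac1\e\,r^{1-\e}\,d'(y,z)^{\e}$, which is~(ii) with $\theta:=\e\in(0,1)$ and $C:=1/\e$. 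Thus the only real obstacle is the Frink comparison $D\gtrsim d^{\e}$ of the first paragraph; once it is available, the quasi-metric property, the equivalence~(i), and the regularity~(ii) are all the short computations displayed above.
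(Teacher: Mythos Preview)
The paper does not supply a proof of this theorem; it is quoted from~\cite{MS1} and used as a black box. Your sketch is precisely the Frink/Mac\'ias--Segovia chaining argument that \cite{MS1} carries out, so in that sense you are reproducing the source the paper cites rather than offering an alternative.

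Your argument is correct. A few remarks. First, you have correctly noticed (and silently repaired) an apparent misprint in the displayed inequality of~(ii): as stated with $d'(x,y)^{\theta}$ on the right, the inequality is false in general (take $d'(x,y)\to 0$ while $d'(x,z)$ stays bounded away from~$0$); the intended right-hand side carries $d'(y,z)^{\theta}$, consistent with~\eqref{smooth metric}, and that is exactly what you prove. Second, the Frink comparison $\sigma\le C\,D$ is, as you say, the only substantive step; the induction you outline is the standard one, and the choice $(2A_0)^{\e}\le 2$ is what makes the recursion close with a finite constant (one gets $\sigma(x,y)\le 4D(x,y)$ or similar). Since you defer the bookkeeping to~\cite{MS1}, the proof as written is a correct sketch rather than a self-contained argument, which is appropriate here given that the paper itself treats the result as a citation.
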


We now establish a new Calder\'on-type reproducing formula on space of homogeneous type $(X,d',\mu).$ To do this, we will apply Coifman's construction for an approximation to the identity on $(X, d',\mu).$ More precisely,
let $h\in C^1(\mathbb{R})$ be such that $h(t)=1$ if $|t|\leq 1, h(t)=0$ if $|t|\geq \delta^{-1}$, and $0\leq h(t)\leq 1$ for all $t\in\mathbb{R}$. For any $k\in\mathbb{Z}$, we define
$$
   T_k(f)(x)= \int_X h( \delta^{-k}d'(x,y) )f(y)d\mu(y).
$$
Obviously, we have $ V_{\delta^{k}}(x)\leq T_k(1)(x)\leq V_{\delta^{-1+k}}(x),$ that is, $T_k(1)(x)\sim V_{\delta^{k}}(x).$ By the quasi metric $d'$ and the doubling property on $\mu,$ it is easy to see that for any fixed constant $c$ and $r>0,$ if $d'(x,y)\leq cr$ then $V_r(x)\sim V_r(y).$ Therefore,
$$
  T_k\Big( {1\over T_k(1)} \Big)(x)=\int_X h(\delta^{-k}d'(x,y)) {1\over T_k(1)(y)}d\mu(y)\sim 1.
$$
We now define two multiplication operators by $M_k(f)(x)=\big( T_k(1)(x)\big)^{-1}f(x)$ and $W_k(f)(x)=\big(T_k( {1\over T_k(1)})(x)\big)^{-1}f(x)$ and operators $S_k(f)(x)=M_kT_kW_kT_kM_k(f)(x).$ We claim that $S_k(x,y),$ the kernel of $S_k,$ satisfies the following conditions:
\begin{eqnarray*}
    &\textup{(i)}& S_k(x,y) = 0 {\rm\ for\ } d'(x,y) \geq C\delta^{k},
        {\rm\ and\ } \| S_k\|_{\infty} \leq C {1\over V_{\delta^{k}}(x)+ V_{\delta^{k}}(y)},\\
    &\textup{(ii)}& |S_k(x,y)-S_k(x',y)|
        \leq C\Big({d'(x,x')\over \delta^k}\Big)^\theta {1\over V_{\delta^{k}}(x)+ V_{\delta^{k}}(y)},\\
    &\textup{(iii)}& |S_k(x,y)-S_k(x,y')|
        \leq C\Big({d'(y,y')\over \delta^k}\Big)^\theta {1\over V_{\delta^{k}}(x)+ V_{\delta^{k}}(y)},\\
    &\textup{(iv)}& |[S_k(x,y)-S_k(x,y')]-[S_k(x',y)-S_k(x',y')]|\\
    \ \ \ \ \ \ \ \ \ \ \ \ \ \  \ &
        &\leq C\Big({d'(x,x')\over \delta^k}\Big)^\theta\Big({d'(y,y')\over \delta^k}\Big)^\theta {1\over V_{\delta^{k}}(x)+ V_{\delta^{k}}(y)},\\
    &\textup{(v)}& \int_{X}S_k(x,y) d\mu(y)
        = \int_{X}S_k(x,y) d\mu(x)=1.
    \end{eqnarray*}

To verify the above claim, note that $S_k(x,y)=S_k(y,x)$ and $S_k(1)=1.$ Thus, (v) holds and we only need to check (i), (ii) and (iv). We first check (i) and write
\begin{eqnarray*}
S_k(x,y)={1\over T_k(1)(x)}\bigg\{ \int_X h(\delta^{-k}d'(x,z)) {1\over T_k\Big( {1\over T_k(1)} \Big)(z) } h(\delta^{-k}d'(z,y))d\mu(z) \bigg\}{1\over T_k(1)(y)}.
\end{eqnarray*}
By the condition on the support for the function $h, S_k(x,y)\not=0,$ then $d'(x,y)\leq 2A_0\delta^{-1+k}.$ This implies (i) with the constant $C=2A_0(\delta^{-1}+1).$ To see (ii), by (i) we only need to consider $d'(x,x')\leq \delta^k.$ Indeed, if $d'(x,x')>\delta^{k}$ then (ii) follows directly from (i). To show (ii) for the case that $d'(x,x')\leq
\delta^k,$ we write
\begin{eqnarray*}
&&S_k(x,y)-S_k(x',y)\\
&=&\Big[{1\over T_k(1)(x)}-{1\over T_k(1)(x')}\Big]\bigg\{ \int_X h(\delta^{-k}d'(x,z)) {1\over  T_k\Big( {1\over T_k(1)} \Big)(z) } h(\delta^{-k}d'(z,y)) d\mu(z) \bigg\}{1\over T_k(1)(y)}\\
&&\hskip.2cm+{1\over T_k(1)(x')}\\
&&\hskip.6cm\times \bigg\{ \int_X [h(\delta^{-k}d'(x,z))-h(\delta^{-k}d'(x',z))] {1\over  T_k\Big( {1\over T_k(1)} \Big)(z) } h(\delta^{-k}d'(z,y)) d\mu(z) \bigg\}{1\over T_k(1)(y)}\\
&=:& Z_1+Z_2.
\end{eqnarray*}
For $d'(x,x')\leq\delta^k,$ by the regularity on the function $h,$ we have
\begin{eqnarray*}
  \Big|{1\over T_k(1)(x)}-{1\over T_k(1)(x')}\Big|\lesssim {1\over V_{\delta^{k}}(x)^2} (\delta^{-k} d'(x,x'))^\theta \mu(B(x,2A_0\delta^{-1+k})),
  \end{eqnarray*}
which implies that
$|Z_1|\lesssim (\frac{d(x,x')}{\delta^k})^\theta{1\over V_{2^{-k}}(x) + V_{2^{-k}}(y)}.$

For $Z_2$, observe that if $d'(x',x)\leq\delta^k$ and $ h( \delta^{-k}d'(x,z)) - h( \delta^{-k} d(x',z) ) \not=0,$ then $d(x,z)< A_0(\delta^{-1}+1)\delta^k$. Thus, by the mean value theorem, we obtain
\begin{eqnarray*}
&&\bigg| \int_X [h(\delta^{-k}d'(x,z)) - h( \delta^{-k} d'(x',z) )] {1\over  T_k\Big( {1\over T_k(1)} \Big)(z) } h(\delta^{-k}d'(z,y)) d\mu(z) \bigg|\\
&&\lesssim (\delta^{-k}d'(x,x'))^\theta\mu(B(x,A_0(\delta^{-1}+1)\delta^k),
\end{eqnarray*}
which together with the doubling property on $\mu$ implies that
$$|Z_2|\lesssim { (\delta^{-k}d(x,x'))^\theta \over V_{\delta^{k}}(x)} \sim \Big(\frac{d'(x,x')}{ \delta^k}\Big)^\theta{1\over V_{2^{-k}}(x) + V_{2^{-k}}(y)}.$$
These estimates for $Z_1$ and $Z_2$ implies (ii).

Finally, note that
\begin{eqnarray*}
&&|[S_k(x,y)-S_k(x',y)]-[S_k(x,y')-S_k(x',y')]|\\
&&=\Big[{1\over T_k(1)(x)}-{1\over T_k(1)(x')}\Big]\\&&\ \ \ \ \times \bigg\{ \int_X h(2^kd(x,z)) {1\over  T_k\Big( {1\over T_k(1)} \Big)(z) } h(2^kd(z,y)) d\mu(z) \bigg\}\Big[{1\over T_k(1)(y)}-{1\over T_k(1)(y')}\Big].
\end{eqnarray*}
Repeating the similar proof for (ii) gives (iv).

We observe that $S_k(x,y)$ have compact support with respect to the quasi metric $d'.$ However, the quasi metric $d$ is geometrically equivalent to $d'$ and hence $S_k(x,y)$ have also compact supports with respect to $d.$ This observation will be used for showing the support condition of atoms in the proof for $\|f\|_{H^p_{cw}}\lesssim \|f\|_{H^p}.$

Now we are ready to establish a new Cader\'on-type reproducing formula.
\begin{prop}\label{ new Caderon-type reproducing formula}
     Let $D_k:=S_{k+1}-S_k.$ Then for each given $f\in L^2(X)\cap H^p(X),\frac{\omega}{\omega +\eta}<p\leq 1,$ there exists a unique function $g\in L^2(X)\cap H^p(X)$ such that
    $\|f\|_{L^2}\sim \|g\|_{L^2}, \|f\|_{H^p}\sim \|g\|_{H^p}$ and
\begin{eqnarray}\label{eqnew Caderon-type reproducing formula}
f(x)=\sum_{k}\sum_{\alpha\in \mathscr{X}^{k+N}}
\mu(Q_\alpha^{k+N})D_k(x,x_\alpha^{k+N}){\widetilde D}_k(g)(x_\alpha^{k+N}).
\end{eqnarray}
where the series converges in $L^2(X)\cap H^p(X), N$ is a large fixed integer and ${\widetilde D}_k
=\sum_{|j|\leq N}D_{k+j}.$
\end{prop}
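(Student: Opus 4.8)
The plan is to run the classical Coifman-type iteration argument that upgrades the continuous reproducing formula $f=\sum_k D_k f$ into a discrete one, and then to verify that the correction operator introduced by the discretization is invertible on the relevant spaces. First I would record that, since $S_k$ is an approximation to the identity with all the regularity, size and cancellation properties (i)--(v) listed above (and $d'$ is geometrically equivalent to $d$), the operators $D_k=S_{k+1}-S_k$ satisfy the almost-orthogonality estimates $\|D_kD_j\|_{L^2\to L^2}\lesssim \delta^{\epsilon|k-j|}$ for some $\epsilon>0$, together with the "molecular" pointwise bounds: $D_k(x,y)$ is supported in $d'(x,y)\lesssim\delta^k$, is bounded by $\big(V_{\delta^k}(x)+V_{\delta^k}(y)\big)^{-1}$, is $\theta$-Hölder in each variable at scale $\delta^k$, and has mean zero in each variable. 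Via the Cotlar-type argument this gives $\sum_k D_k=I$ in the strong operator topology on $L^2(X)$, and by the Littlewood--Paley theory (Theorem \ref{theorem Littlewood Paley}) and the square-function characterization of $H^p(X)$, the partial sums $\sum_{|k|\le M}D_k f$ converge to $f$ in $H^p(X)$ as well, for $f\in L^2(X)\cap H^p(X)$.

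Next I would perform the discretization. Using the Auscher--Hytönen dyadic cubes $\{Q_\alpha^{k+N}\}_{\alpha\in\mathscr X^{k+N}}$ at a scale $\delta^{k+N}$ much finer than $\delta^k$ (this is where the large integer $N$ enters), write
\[
f=\sum_k D_k f=\sum_k D_k\Big(\sum_{|j|\le N}D_{k+j}\Big)f+\sum_k D_k\Big(I-\sum_{|j|\le N}D_{k+j}\Big)f
=\sum_k D_k\widetilde D_k f+R_Nf,
\]
and then replace the integral $D_k\widetilde D_k f(x)=\int D_k(x,y)\widetilde D_k f(y)\,d\mu(y)$ by the Riemann-type sum $\sum_{\alpha\in\mathscr X^{k+N}}\mu(Q_\alpha^{k+N})D_k(x,x_\alpha^{k+N})\widetilde D_k f(x_\alpha^{k+N})$. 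The error committed in each such replacement is controlled by the oscillation of $y\mapsto D_k(x,y)\widetilde D_k f(y)$ over a cube of radius $\lesssim\delta^{k+N}\ll\delta^k$, which is $O(\delta^{N\theta})$ times the size, by the $\theta$-regularity of $D_k$ in its second variable and of $\widetilde D_k f$ (the latter being a superposition of the molecular kernels $D_{k+j}$). Summing these errors and using almost orthogonality gives a remainder operator $E_N$ with $\|E_N\|_{L^2\to L^2}+\|E_N\|_{H^p\to H^p}\lesssim\delta^{N\theta}+(\text{error from }R_N)$, which can be made strictly less than $1$ by choosing $N$ large. Hence $T_N:=I-E_N$ is invertible on $L^2(X)$ and on $H^p(X)$ simultaneously by the Neumann series $T_N^{-1}=\sum_{m\ge0}E_N^m$, with $\|T_N^{-1}\|,\|T_N^{-1,-1}\|\lesssim 1$; setting $g:=T_N^{-1}f$ gives $\|g\|_{L^2}\sim\|f\|_{L^2}$, $\|g\|_{H^p}\sim\|f\|_{H^p}$, and substituting $f=T_Ng$ into the identity $f=\big(\sum_k D_k\widetilde D_k^{\mathrm{disc}}\big)(\cdot)+E_N(\cdot)$ applied with argument $g$... more precisely, by construction $\sum_k\sum_{\alpha}\mu(Q_\alpha^{k+N})D_k(x,x_\alpha^{k+N})\widetilde D_k(g)(x_\alpha^{k+N})=T_N g(x)=f(x)$, which is exactly \eqref{eqnew Caderon-type reproducing formula}, with convergence in $L^2(X)\cap H^p(X)$ inherited from the convergence of the corresponding operator series. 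Uniqueness of $g$ follows from injectivity of $T_N$.

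The main obstacle I anticipate is not the $L^2$ theory, which is standard Cotlar--Stein, but the bookkeeping needed to push every estimate through in the $H^p$ quasi-norm for $p\le 1$: one must show that the discretization error operator $E_N$ and the tail operator $R_N$ map $H^p(X)\to H^p(X)$ with small norm, which requires estimating $S(E_N f)$ and $S(R_N f)$ via composed almost-orthogonality bounds of the form $\|D_j E_N D_k\|\lesssim\delta^{N\theta}\delta^{\epsilon|j-k|}$ and then summing the resulting geometric series against the $\ell^p$-valued square function — the subtlety being that for $p<1$ one cannot use duality and must instead exploit the molecular/Hölder structure of the kernels directly, together with the fact (Lemma \ref{lemma:Lip(p,2)}) that these kernels lie in $\mathcal C_{1/p-1}(X)$ so that all pairings are legitimate. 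A secondary technical point is ensuring the sum defining \eqref{eqnew Caderon-type reproducing formula} genuinely converges in $H^p$ and not merely in the distribution topology; this is handled by showing the partial sums over $|k|\le M$ form a Cauchy sequence in $H^p(X)$, again using the square-function estimates uniformly in $M$.
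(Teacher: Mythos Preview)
Your plan is correct and follows the same approach as the paper's proof: Coifman decomposition $I=\sum_k D_k\widetilde D_k + R_N^{(2)}$, discretization of the main term at the finer scale $k+N$ producing a second remainder $R_N^{(1)}$, and Neumann-series inversion of $T_N=I-R_N^{(1)}-R_N^{(2)}$ once $\|S(R_N^{(i)}f)\|_{L^p}\lesssim\delta^{\theta N}\|S(f)\|_{L^p}$ is established. For that $H^p$ bound the paper pairs $R_N^{(i)}f$ against the wavelets $\psi_\alpha^k$ on both sides and then invokes the Frazier--Jawerth estimate and the Fefferman--Stein vector-valued maximal inequality (rather than sandwiching between $D_j$'s as you suggest), and it singles out the second-order smoothness condition~(iv) on $S_k$ as the ingredient that makes this step go through.
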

Note that in the wavelet expression given in Theorem A, for each $k\in\Z$ the sum
runs over the set $\alpha\in \mathscr{Y}^k$, while in this new Calder\'on-type reproducing formula, for
each $k\in\Z$ the sum runs over the set $\alpha\in\mathscr{X}^{k +
N}$. Besides the distinction between $\mathscr{Y}$ and
$\mathscr{X}$, the main difference here is that in the
the wavelet expressions the function $f$ is involved on both sides but rather, in this new Calder\'on-type reproducing formula, functions $f$ and $g$ are involved on both sides, respectively. However, $D_k$ involved in this new reproducing formula has compact support which will be important and used frequently. Finally, in this new reproducing
formula, we must sum over all cubes at the smaller scale $k + N$.

We now show Proposition \ref{ new Caderon-type reproducing formula}. Note that the family of operators $S_k$ constructed above is an approximation to the identity in $L^2(X).$ Thus, applying Coifman's decomposition for a fixed positive integer $N$ and $f\in L^2(X),$
\begin{align*}
    f(x)
    &=\sum_{l}D_l(f)(x)=\sum_{l}\sum_{k}D_lD_k(f)(x)\\
    &=\sum_k\sum_{l:\ |k-l|\leq N}D_lD_k(f)(x)+
        \sum_k\sum_{l:\ |k-l|> N}D_lD_k(f)(x)\\
    &=\sum_k\sum_{\alpha\in {\mathscr X}^{k+N}}\mu({ Q}^{k+N}_\alpha)
        D_k(x, x^{k+N}_\alpha){\widetilde D}_k(f)(x_\alpha^{k+N})\\
    &\hskip.5cm + \Big(\sum_{k}D_k{\widetilde D}_k(f)(x)
        - \sum_k\sum_{\alpha\in {\mathscr X}^{k+N}}
        \mu({ Q}^{k+N}_\alpha)D_k(x, x^{k+N}_\alpha)
        {\widetilde D}_k(f)(x_\alpha^{k+N})\Big)
     \\&\hskip.5cm+ \sum_k\sum_{l:\ |k-l|> N}D_kD_l(f)(x)\\
    &=: T_{N}(f)(x) + R^{(1)}_{N}(f)(x) + R^{(2)}_{N}(f)(x)
\end{align*}
in the sense of $L^2(X)$, where ${\widetilde D}_k=\sum_{|j|\leq N}D_{k+j}.$ and particularly, as mentioned, the kernel of $D_k$ has compact support.

Note that $T_N = I - R^{(1)}_N-R^{(2)}_N$ by definition and $D_k(x,y),$ the kernels of $D_k,$ satisfy the decay condition (i), particularly, as mentioned, the kernels of $D_k$ have compact supports, the smoothness conditions (ii)--(iv) and the moment conditions $\int_{X}D_k(x,y) d\mu(y)= \int_{X}D_k(x,y) d\mu(x)=0.$ Therefore the Cotlar--Stein lemma can be applied to show that $R^{(i)}_N$ as well as $T_N$ are bounded on $L^2(X).$ Moreover, we will show that for $f\in L^2(X), \frac{\omega}{\omega+\eta}<p<\infty$ and $i=1,2,$
\begin{equation}\label{eqn:S(R_N)_Lp_estimate}
    \|S(R^{(i)}_N(f))\|_{L^p}
    \leq C\delta^{\theta N}\|S(f)\|_{L^p}.
\end{equation}
This will imply that if $N$ is chosen so that $2C\delta^{\theta N}<1,$ then $(T_N)^{-1},$ the inverse of $T_N,$ is bounded on $L^2(X)\cap H^p(X).$ Thus, given $f\in L^2(X)\cap H^p(X),$ set $g=(T_N)^{-1}f.$ Then $g$ satisfies all conditions in Proposition \ref{ new Caderon-type reproducing formula} and moreover, the representation of $f$ in \eqref{eqnew Caderon-type reproducing formula} holds. We only prove the estimate in \eqref{ new Caderon-type reproducing formula} for $R^{(2)}_N$ since the proof for $R^{(1)}_N$ is similar to one given in \cite{HLW}. See pages 34--39 in \cite{HLW} for the details. To estimate $\|S(R^{(2)}_N(f))\|_{L^p(X)}$, we write
\begin{equation*}
    \|S(R^{(2)}_N(f))\|_{L^p}
    = \Big\|\Big\{ \sum_{k}\sum_{\alpha\in\mathscr{Y}^k}
        \big| \langle \psi_{\alpha}^{k},R^{(2)}_N(f) \rangle
        \widetilde{\chi}_{{Q}_{\alpha}^{k}}(\cdot)
        \big|^2 \Big\}^{1/2} \Big\|_{L^p}.
\end{equation*}
By the $L^2(X)$-boundedness of $R^{(2)}_N$ and the wavelet
reproducing formula in Theorem A for
$f\in L^2(X)$, we have
\begin{eqnarray*}
    &&\Big\langle {\psi_{\alpha}^{k}},
        R^{(2)}_N(f) \Big\rangle=\sqrt{\mu({Q}_{\alpha}^{k})}\Big\langle {\psi_{\alpha}^{k}\over \sqrt{\mu({Q}_{\alpha}^{k})}}, R^{(2)}_N(f) \Big\rangle \\
    &&= \sum_{k'}
    \sum_{\alpha'\in {\mathscr Y}^{k'}}
    \sqrt{\mu({Q}_{\alpha}^{k})}
    \sqrt{\mu({Q}_{\alpha^{'}}^{k^{'}})}
    \Big\langle {\psi_{\alpha}^{k}(\cdot)\over \sqrt{\mu({Q}_{\alpha}^{k})}},\big\langle \sum_{|j-l|>N} D_jD_l(\cdot,\cdot), {\psi_{\alpha^{'}}^{k^{'}}(\cdot) \over \sqrt{\mu({Q}_{\alpha^{'}}^{k^{'}})} }\big\rangle\Big\rangle \big\langle \psi_{\alpha^{'}}^{k^{'}}, f\big\rangle.\nonumber
\end{eqnarray*}
To simplify the notation, set $E_k^\alpha(x_\alpha^k,x)={\psi_{\alpha}^{k}(x)\over \sqrt{\mu({Q}_{\alpha}^{k})}}.$ We now estimate the term $$\big\langle E_k^\alpha(\cdot),\langle \sum_{|j-l|>N} D_jD_l(\cdot,\cdot), E_{k'}^{\alpha'}(\cdot)\rangle\big\rangle.$$ Observe first that
$E_k^\alpha(x_\alpha^k,x)$ is a test function in $\GGs(x_\alpha^k, \delta^k,\eta, \gamma)$ for any $\gamma>0$ and $D_j(x,y)$ is a test function in $\GGs(x,\delta^j,\theta,\gamma)$ if $x$ is fixed or in $\GGs(y,\delta^j,\theta,\gamma)$ if $y$ is fixed. Furthermore, by standard almost orthogonal estimate, $D_jD_l(x,y)$ satisfies the estimates (i)--(iii) as $D_{j\wedge l}(x,y)$ does with the bounds $C\delta^{|j-l|\theta},$ where, as usual, $j\wedge l=min \lbrace j,l\rbrace$ denotes the minimum of $j$ and $l.$ Indeed, if $j\leq k,$ observing that $D_jD_l(x,y)=\int [D_j(x,z)-D_j(x,y)]D_l(z,y)d\mu(z)$ and applying the smoothness condition on $D_j$ and the size condition on $D_l$ yields that $D_jD_l(x,y)$ satisfies the condition (i) with the constant replaced by $C\delta^{|j-l|\theta}.$ Writing $D_jD_l(x,y)-D_jD_l(x',y)=\int \lbrace [D_j(x,z)-D_j(x,y)]-[D_j(x',z)-D_j(x',y)]\rbrace D_l(z,y)d\mu(z)=\int \lbrace [D_j(x,z)-D_j(x',z)]-[D_j(x,y)-D_j(x',y)]\rbrace D_l(z,y)d\mu(z)$ and applying the smoothness condition (iv) on $D_j$ and the size condition on $D_l$ implies that $D_jD_l(x,y)$ satisfies the condition (ii) with the bound replaced by $C\delta^{|j-l|\theta}.$ We point out that the condition (iv) was not used in \cite{DJS} for the proof of the $L^2$ boundedness but this is crucial for the boundedness of $H^p(X).$ We leave the details of the proof to the reader.
Set
$$F_{k'}(x, x_{k'}^{\alpha'})=\sum_{(j,l): |j-l|>N}\langle D_jD_l(x,\cdot), E_{k'}^{\alpha'}(\cdot)\rangle
=\sum_{(j,l): |j-l|>N}\int D_jD_l(x,y)E_{k'}^{\alpha'}(y,x_{k'}^{\alpha'})d\mu(y).$$
We claim that
\begin{eqnarray*}
    &&\textup{(a)}\ \ |F_{k^{'}}(x, x_{\alpha^{'}}^{k^{'}})|
        \leq C \delta^{\theta N}
            {1\over V_{\delta^{k^{'}}}(x) + V(x,x_{\alpha^{'}}^{k^{'}})}
            \Big({\delta^{k^{'}}\over \delta^{k^{'}}
            + d(x,x_{\alpha^{'}}^{k^{'}})}\Big)^{\gamma}, \hspace{4.2cm}
\end{eqnarray*}
for all $\gamma \in (0,\eta)$, and
\begin{eqnarray*}
    \lefteqn{\textup{(b)}\ \
            |F_{k^{'}}(x, x_{\alpha^{'}}^{k^{'}}) - F_{k^{'}}(x',x_{\alpha^{'}}^{k^{'}})|} \hspace{3cm}\\
    &&\leq C\delta^{\theta N} \Big({d(x,x')\over \delta^{k'}}\Big)^{\eta'}
    {}\times
        \bigg[{1\over V_{\delta^{k^{'}}}(x) + V(x,x_{\alpha^{'}}^{k^{'}})}
        \Big({\delta^{k^{'}}\over \delta^{k^{'}} + d(x,x_{\alpha^{'}}^{k^{'}})}\Big)^{\gamma} \\
    && \hspace{1cm} {}+ {1\over V_{\delta^{k^{'}}}(x') + V(x',x_{\alpha^{'}}^{k^{'}})}
        \Big({\delta^{k^{'}}\over \delta^{k^{'}}
        + d(x',x_{\alpha^{'}}^{k^{'}})}\Big)^{\gamma}\bigg],
\end{eqnarray*}
for all $\gamma, \eta' \in (0,\eta)$.

The main tool to show the above claim is the following almost-orthogonality estimate: There
exists a constant~$C$ such that
\begin{eqnarray}\label{eqn:almost_orthogonality_estimate}
    &&\Big|\Big\langle
        \frac{\psi_{\alpha}^{k}(\cdot)}{\sqrt{\mu({ Q}_{\alpha}^{k})}},
        D_{j}(\cdot,x)
        \Big\rangle\Big| \nonumber \\
    &&\hskip.5cm \leq C \delta^{\vert k - j\vert{\eta }}
        \frac{1}{V_{\delta^{(j\wedge k)}}(x_\alpha^k)
            + V_{\delta^{(j\wedge k)}}(x)
            + V(x_\alpha^k,x)}
        \Big({{\delta^{(j\wedge k)}}
            \over {\delta^{(j\wedge k)}
            + d(x_\alpha^k,x)}}
        \Big)^{\gamma }.
\end{eqnarray}
See the proof for such an argument in (4.4) on page 31 \cite{HLW}. Applying the almost-orthogonality estimate in \eqref{eqn:almost_orthogonality_estimate} with $D_j$ replaced by $D_jD_l$ implies that
\begin{eqnarray}
    &&\Big|\big\langle
        D_{j}D_l(x,\cdot), E_{k'}^{\alpha'}(\cdot)
        \big\rangle\Big| \nonumber \\
    &&\hskip.5cm \leq C \delta^{|j-l|\theta}\delta^{\vert k' - j\wedge l\vert{\eta }}
        \frac{1}{V_{\delta^{(j\wedge l\wedge k')}}(x_{\alpha'}^{k'})
            + V_{\delta^{(j\wedge l\wedge k')}}(x)
            + V(x_{\alpha'}^{k'},x)}
        \Big({{\delta^{(j\wedge l\wedge k')}}
            \over {\delta^{(j\wedge l\wedge k')}
            + d(x_{\alpha'}^{k'},x)}}
        \Big)^{\gamma }.
\end{eqnarray}
Applying the above estimates for $\eta>\gamma$ gives
\begin{eqnarray*}
&&|F_{k'}(x, x_{k'}^{\alpha'})|=|\sum_{(j,l): |j-l|>N}\langle D_jD_l(x,\cdot), E_{k'}^{\alpha'}(\cdot)\rangle|\\
&&\leq C\sum_{(j,l): |j-l|>N}\delta^{|j-l|\theta}\delta^{\vert k' - j\wedge l\vert{\eta }}
        \frac{1}{V_{\delta^{(j\wedge l\wedge k')}}(x_{\alpha'}^{k'})
            + V_{\delta^{(j\wedge l\wedge k')}}(x)
            + V(x_{\alpha'}^{k'},x)}
        \Big({{\delta^{(j\wedge l\wedge k')}}
            \over {\delta^{(j\wedge l\wedge k')}
            + d(x_{\alpha'}^{k'},x)}}
        \Big)^{\gamma }\\
&&\leq C\sum_{(j,l): |j-l|>N}\delta^{|j-l|\theta}\delta^{\vert k' - j\wedge l\vert{(\eta-\gamma) }}
        \frac{1}{V_{\delta^{k'}}(x)
            + V(x_{\alpha'}^{k'},x)}
        \Big({{\delta^{k'}}
            \over {\delta^{k'}
            + d(x_{\alpha'}^{k'},x)}}
        \Big)^{\gamma }\\
&&\leq C\delta^{N\theta}\frac{1}{V_{\delta^{k'}}(x)
            + V(x_{\alpha'}^{k'},x)}
        \Big({{\delta^{k'}}
            \over {\delta^{k'}
            + d(x_{\alpha'}^{k'},x)}}
        \Big)^{\gamma },
\end{eqnarray*}
which implies $(a).$ Similarly, for $\eta'<\eta$ and $\gamma<\eta,$
\begin{eqnarray*}
&&|F_{k^{'}}(x, x_{\alpha^{'}}^{k^{'}}) - F_{k^{'}}(x',x_{\alpha^{'}}^{k^{'}})|\\
&&=\Big|\sum_{(j,l): |j-l|>N}\langle [D_jD_l(x,\cdot)-D_jD_l(x',\cdot)], E_{k'}^{\alpha'}(\cdot)\rangle\Big|\\
&&\leq C\sum_{(j,l): |j-l|>N}\delta^{|j-l|\theta}\big(\frac{d'(x,x')}{\delta^{j\wedge l}}\big)^\eta
\delta^{\vert k' - j\wedge l\vert{\eta }}
\\&&\hskip.5cm\times\Big[
        \frac{1}{V_{\delta^{(j\wedge l\wedge k')}}(x_{\alpha'}^{k'})
            + V_{\delta^{(j\wedge l\wedge k')}}(x)
            + V(x_{\alpha'}^{k'},x)}
        \Big({{\delta^{(j\wedge l\wedge k')}}
            \over {\delta^{(j\wedge l\wedge k')}
            + d(x_{\alpha'}^{k'},x)}}
        \Big)^{\gamma }\\
       &&\hskip1cm +\frac{1}{V_{\delta^{(j\wedge l\wedge k')}}(x_{\alpha'}^{k'})
            + V_{\delta^{(j\wedge l\wedge k')}}(x')
            + V(x_{\alpha'}^{k'},x')}
        \Big({{\delta^{(j\wedge l\wedge k')}}
            \over {\delta^{(j\wedge l\wedge k')}
            + d(x_{\alpha'}^{k'},x')}}
        \Big)^{\gamma }\Big]\\
&&\leq C\sum_{(j,l): |j-l|>N}\delta^{|j-l|\theta}\big(\frac{d(x,x')}{\delta^{k'}}\big)^{\eta^{'}}
\delta^{\vert k' - j\wedge l\vert{(\eta-\eta') }}\times\Big[
        \frac{1}{ V_{\delta^{k'}}(x)
            + V(x_{\alpha'}^{k'},x)}
        \Big({{\delta^{k'}}
            \over {\delta^{k'}
            + d(x_{\alpha'}^{k'},x)}}
        \Big)^{\gamma }\\
        &&\hskip1cm+\frac{1}{V_{\delta^{k'}}(x')
            + V(x_{\alpha'}^{k'},x')}
        \Big({{\delta^{k'}}
            \over {\delta^{k'}
            + d(x_{\alpha'}^{k'},x')}}
        \Big)^{\gamma }\Big]\\
&&\leq C\delta^{N\theta}\big(\frac{d(x,x')}{\delta^{k^{'}}}\big)^{\eta^{'}}\\
&&\hskip1cm\times\Big[
        \frac{1}{ V_{\delta^{k'}}(x)
            + V(x_{\alpha'}^{k'},x)}
        \Big({{\delta^{k'}}
            \over {\delta^{k'}
            + d(x_{\alpha'}^{k'},x)}}
        \Big)^{\gamma }+\frac{1}{V_{\delta^{k'}}(x')
            + V(x_{\alpha'}^{k'},x')}
        \Big({{\delta^{k'}}
            \over {\delta^{k'}
            + d(x_{\alpha'}^{k'},x')}}
        \Big)^{\gamma }\Big],
\end{eqnarray*}
where the equivalence between $d'$ and $d$ is used in the second inequality. This gives the desired estimate for $(b).$

Since $F_{k'}(\cdot, x_{k'}^{\alpha'})$ satisfies $(a)$ and $(b)$, from Remakr 4.5 in \cite{HLW}, we obtain that
\begin{eqnarray*}
  &&  \Big|\Big\langle E_k^\alpha(x_\alpha^k,\cdot),F_{k'}(\cdot, x_{k'}^{\alpha'})\Big\rangle \Big| \\
  &&  \leq C \delta^{N\theta}\delta^{|k' - k|\eta^{''}}
        \frac{1}{V_{\delta^{(k\wedge k')}}(x_{\alpha'}^{k'})
            + V_{\delta^{(k\wedge k')}}(x_\alpha^k)
            + V(x_{\alpha'}^{k'},x_\alpha^k)}
        \Big({{\delta^{(k\wedge k')}}
            \over {\delta^{(k\wedge k')}
            + d(x_{\alpha'}^{k'},x_\alpha^k)}}
        \Big)^{\gamma }
\end{eqnarray*}
for $\eta^{''}<\eta'$ and $\gamma<\eta'$.

Thus, we have
\begin{eqnarray*}
    \Big\langle {\psi_{\alpha}^{k}},
        R^{(2)}_N(f) \Big\rangle
    &=& \sum_{k'}
    \sum_{\alpha'\in {\mathscr Y}^{k'}}
    \sqrt{\mu({Q}_{\alpha}^{k})}
    \sqrt{\mu({Q}_{\alpha^{'}}^{k^{'}})}
    \Big|\Big\langle E_k^\alpha(x_\alpha^k,\cdot),F_{k'}(\cdot, x_{k'}^{\alpha'})\Big\rangle\Big|\Big| \big\langle \psi_{\alpha^{'}}^{k^{'}}, f\big\rangle\Big|\\
&\leq& C\delta^{N\theta}\sqrt{\mu({Q}_{\alpha}^{k})}\sum_{k'}
    \sum_{\alpha'\in {\mathscr Y}^{k'}}
        \frac{1}{V_{\delta^{(k\wedge k')}}(x_{\alpha'}^{k'})
            + V_{\delta^{(k\wedge k')}}(x_\alpha^k)
            + V(x_{\alpha'}^{k'},x_\alpha^k)}\\
&&\times      \mu({Q}_{\alpha^{'}}^{k^{'}}) \delta^{|k' - k|\eta^{''}}  \Big({{\delta^{(k\wedge k')}}
            \over {\delta^{(k\wedge k')}
            + d(x_{\alpha'}^{k'},x_\alpha^k)}}
        \Big)^{\gamma }\Big| \Big\langle {\psi_{\alpha^{'}}^{k^{'}}\over \sqrt{\mu({Q}_{\alpha^{'}}^{k^{'}})}}, f\Big\rangle\Big|
\end{eqnarray*}
As a consequence, we have
\begin{eqnarray*}
&&\Big\{   \sum_k\sum_{\alpha\in\mathscr{Y}^k}  \Big|\big\langle {\psi_{\alpha}^{k}},
        R^{(2)}_N(f) \big\rangle \widetilde{\chi}_{Q_\alpha^k}(x) \Big|^2\Big\}^{1/2}\\
&&\leq C\delta^{N\theta}\bigg\{   \sum_k\sum_{\alpha\in\mathscr{Y}^k}  \bigg| \sum_{k'}
    \sum_{\alpha'\in {\mathscr Y}^{k'}}    \mu({Q}_{\alpha^{'}}^{k^{'}}) \delta^{|k' - k|\eta^{''}}
        \frac{1}{V_{\delta^{(k\wedge k')}}(x_{\alpha'}^{k'})
            + V_{\delta^{(k\wedge k')}}(x_\alpha^k)
            + V(x_{\alpha'}^{k'},x_\alpha^k)}\\
&&\hskip.5cm\times        \Big({{\delta^{(k\wedge k')}}
            \over {\delta^{(k\wedge k')}
            + d(x_{\alpha'}^{k'},x_\alpha^k)}}
        \Big)^{\gamma }\Big| \Big\langle {\psi_{\alpha^{'}}^{k^{'}}\over \sqrt{\mu({Q}_{\alpha^{'}}^{k^{'}})}}, f\Big\rangle\Big| \bigg|^2  \chi_{Q_\alpha^k}(x)\bigg\}^{1/2}.
\end{eqnarray*}
Then, following the same argument as in page 33 of \cite{HLW}, i.e., the estimate as in \cite{FJ}, pp.147--148  and the Fefferman--Stein vector-valued maximal function inequality in \cite{FS}, we obtain that
\begin{eqnarray*}
    \|S(R^{(2)}_N(f))\|_{L^p}
   & = & \Big\|\Big\{ \sum_{k}\sum_{\alpha\in\mathscr{Y}^k}
        \big| \langle \psi_{\alpha}^{k},R^{(2)}_N(f) \rangle
        \widetilde{\chi}_{{Q}_{\alpha}^{k}}(\cdot)
        \big|^2 \Big\}^{1/2} \Big\|_{L^p}\\
  &\leq & C\delta^{N\theta} \Big\|\Big\{ \sum_{k'}\sum_{\alpha'\in\mathscr{Y}^{k'}}
        \big| \langle \psi_{\alpha^{'}}^{k^{'}}, f \rangle
        \widetilde{\chi}_{{Q}_{\alpha'}^{k'}}(\cdot)
        \big|^2 \Big\}^{1/2} \Big\|_{L^p}\\
  &=& C\delta^{N\theta} \|S(f)\|_{L^p}
\end{eqnarray*}
The proof of Proposition \ref{ new Caderon-type reproducing formula} is concluded.

\subsection{The proof for $\|f\|_{H^p_{cw}}\lesssim \|f\|_{H^p}$}

Now we show that $\|f\|_{H^p_{cw}}\lesssim \|f\|_{H^p}$ for $f\in L^2(X)\cap H^p(X)$ since $L^2(X)\cap H^p(X)$ is dense in $H^p(X).$ To this end, we define a new Littlewood--Paley square function on $L^2(X)\cap H^p(X)$ by
\[
{\widetilde{S}}(f)(x)=\Big(\sum_{k}\sum_{\alpha\in \mathscr{X}^{k +
N}}|{\widetilde D}_k(g)(x_\alpha^{k+N})|^2\chi_{Q_\alpha^{k+N}}(x)\Big)^{1/2},
\]
where $f, g$ are related as given in Proposition \ref{ new Caderon-type reproducing formula}.

By the estimate in (4.2) of Theorem 4.4 in \cite{HLW}, we have that for $\frac{\omega}{\omega +\eta}<p\leq \infty,$ $\|{\widetilde{S}}(f)\|_{L^p}\leq C\|S(g)\|_{L^p}.$ Thus, if $f\in L^2(X)\cap H^p(X)$ and $g$ is given by Proposition \ref{ new Caderon-type reproducing formula}, then $\|{\widetilde{S}}(f)\|_{L^2}\leq C\|g\|_{L^2}\leq C\|f\|_{L^2}$ and $\|{\widetilde{S}}(f)\|_{L^p}\leq C\|g\|_{H^p}\leq C\|f\|_{H^p}, \frac{\omega}{\omega +\eta}<p\leq 1.$ We now set
\[\Omega_l=\left\{ x\in X: {\widetilde{S}}(f)(x)>2^l\right\},\]
\[
B_l=\left\{ Q_\alpha^k:\mu(Q_\alpha^k\cap\Omega_l)>{1\over2}\mu(Q_\alpha^k)
\text{ and }
\mu(Q_\alpha^k \cap\Omega_{l+1})\leq {1\over2}\mu(Q_\alpha^k)\right\}
\]
and
\[\widetilde{\Omega}_l=\left\{ x\in {X}:  M(\chi_{\Omega_l})(x)>1/2\right\},\]
where $M$ is the Hardy--Littlewood maximal function on $X$ and hence, $\mu(\widetilde{\Omega}_l)\leq C\mu({\Omega}_l).$

Note that each dyadic cube $Q_\alpha^k$ belongs to one and only one $B_l$. Applying Proposition \ref{ new Caderon-type reproducing formula}, we can write
\begin{eqnarray*}
f(x)&=&\sum_{l}\sum_{ Q_\alpha^{k+N}\in B_l}  \mu(Q_\alpha^{k+N})D_k(x,x_\alpha^{k+N}) {\widetilde D}_k(g)(x_\alpha^{k+N})\\
&=&\sum_{l} \sum_{j:\widetilde{Q}_\alpha^{j,l} \in B_l} \sum_{\substack{Q_\alpha^{k+N}\in B_l\\ Q_{\alpha}^{k+N}\subseteq
\widetilde{Q}_{\alpha}^{j,l}}} \mu(Q_\alpha^{k+N})D_k(x,x_\alpha^{k+N}) {\widetilde D}_k(g)(x_\alpha^{k+N}),
\end{eqnarray*}
where the series converge in $L^2(X)$ and in $H^p(X),$ and we denote $\widetilde{Q}_\alpha^{j,l} \in B_l$ by the maximal dyadic cubes contained in $B_l$.

Finally, we write
\[
f(x)=\sum_{l} \sum_{j:\widetilde{Q}_\alpha^{j,l} \in B_l} \lambda_l(\widetilde{Q}_{\alpha}^{j,l}) a_{l,\widetilde{Q}_{\alpha}^{j,l}}(x),
\]
where
\[
a_{l,\widetilde{Q}_{\alpha}^{j,l}}(x) ={1\over \lambda_l(\widetilde{Q}_{\alpha}^{j,l}) } \sum_{\substack{Q_\alpha^{k+N}\in B_l\\ Q_{\alpha}^{k+N}\subseteq
\widetilde{Q}_{\alpha}^{j,l}}}  \mu(Q_\alpha^{k+N})D_k(x,x_\alpha^{k+N}){\widetilde D}_k(g)(x_\alpha^{k+N})
\]
and
\[
\lambda_l(\widetilde{Q}_{\alpha}^{j,l})= \widetilde{C}\Big(
 \sum_{\substack{Q_\alpha^{k+N}\in B_l\\ Q_{\alpha}^{k+N}\subseteq
\widetilde{Q}_{\alpha}^{j,l}}}  \mu(Q_\alpha^{k+N}) |{\widetilde D}_k(g)(x_\alpha^{k+N}) |^2 \Big)^{1/2} \mu(\widetilde{Q}_{\alpha}^{j,l})^{1/p-1/2}
\]
with a constant $\widetilde{C}$ to be determined later.

Now we prove that the above decomposition of $f$ is a desired atomic decomposition. To see this, we first show that each $a_{l,\widetilde{Q}_{\alpha}^{j,l}}(x)$ is an $(p,2)$ atom. Note that the quasi metrics $d$ and $d'$ are geometrically equivalent. This together with the fact, as mentioned, that $D_k(x,x_\alpha^k)$ have compact supports implies that $a_{l,\widetilde{Q}_{\alpha}^{j,l}}(x)$ is supported in $C\widetilde{Q}_{\alpha}^{j,l}$ with $C=C_1A_0\delta^{-N}(\delta^{-1}+1),$ where $C_1$ is the constant appeared in the equivalence between $d$ and $d'.$
By the cancellation conditions on $D_k(x,x_\alpha^k),$ it is easy to see that
\[
\int _X a_{l,\widetilde{Q}_{\alpha}^{j,l}}(x) d\mu(x)=0.
\]
To verify the size condition of $a_{l,\widetilde{Q}_{\alpha}^{j,l}}(x)$, applying the duality argument implies that
\begin{eqnarray*}
\lefteqn{\|a_{l,\widetilde{Q}_{\alpha}^{j,l}}(x)\|_{L^2}  }\\
&=&\sup _{h \in L^2(X),\  \|h\|_{L^2}=1 } \Big|\big\langle a_{l,\widetilde{Q}_{\alpha}^{j,l}}(x), h(x)  \big\rangle \Big|\\
&=&\sup _{h \in L^2(X),\  \|h\|_{L^2}=1 } \Big| {1\over \lambda_l(\widetilde{Q}_{\alpha}^{j,l}) } \sum_{\substack{Q_\alpha^{k+N}\in B_l\\ Q_{\alpha}^{k+N}\subseteq
\widetilde{Q}_{\alpha}^{j,l}}}  \mu(Q_\alpha^{k+N})D_k(h)(x_\alpha^{k+N}){\widetilde D}_k(g)(x_\alpha^{k+N})  \Big|\\
&\leq& \sup _{h \in L^2(X),\  \|h\|_{L^2}=1 } {1\over \lambda_l(\widetilde{Q}_{\alpha}^{j,l}) } \Big(\sum_{\substack{Q_\alpha^k\in B_l\\ Q_{\alpha}^{k+N}\subseteq
\widetilde{Q}_{\alpha}^{j,l}}}  \mu(Q_\alpha^{k+N}) |D_k(h)(x_\alpha^{k+N})|^2\Big)^{1/2}\\
&&\hskip1cm\times\Big(
 \sum_{\substack{Q_\alpha^{k+N}\in B_l\\ Q_{\alpha}^{k+N}\subseteq
\widetilde{Q}_{\alpha}^{j,l}}}  \mu(Q_\alpha^{k+N}) |{\widetilde D}_k(g)(x_\alpha^{k+N}) |^2 \Big)^{1/2}\\
&\leq& \sup _{h \in L^2(X),\ |h\|_{L^2}=1 }  \widetilde{C} \|h\|_{L^2} {1\over \lambda_l(\widetilde{Q}_{\alpha}^{j,l}) } \Big(
 \sum_{\substack{Q_\alpha^{k+N}\in B_l\\ Q_{\alpha}^{k+N}\subseteq
\widetilde{Q}_{\alpha}^{j,l}}}  \mu(Q_\alpha^{k+N}) |{\widetilde D}_k(g)(x_\alpha^{k+N}) |^2 \Big)^{1/2}\\
&\leq& \mu(\widetilde{Q}_{\alpha}^{j,l})^{1/2-1/p},
\end{eqnarray*}
where $\widetilde{C}$ is chosen to be the constant satisfying the following estimate:
\begin{eqnarray*}
\Big(\sum_{k}\sum_{\alpha\in \mathscr{X}^{k+N}}  \mu(Q_\alpha^{k+N}) |D_k(h)(x_\alpha^{k+N})|^2\Big)^{1/2}
&\leq&   \widetilde{C} \|h\|_{L^2(X)}
\end{eqnarray*}
for each $h\in L^2(X)$.

We finally have
\begin{eqnarray*}
\lefteqn{\sum_l\sum_{j: \widetilde{Q}_{\alpha}^{j,l} } |\lambda_{l,\widetilde{Q}_{\alpha}^{j,l}}|^p  }\\
&=& \sum_l\sum_{ \widetilde{Q}_{\alpha}^{j,l} \in B_l}  \widetilde{C}^p\Big(
 \sum_{\substack{Q_\alpha^{k+N}\in B_l\\ Q_{\alpha}^{k+N}\subseteq
\widetilde{Q}_{\alpha}^{j,l}}}  \mu(Q_\alpha^{k+N}) |{\widetilde D}_k(g)(x_\alpha^{k+N}) |^2 \Big)^{p/2} \mu(\widetilde{Q}_{\alpha}^{j,l})^{1-p/2}\\
&\leq& \widetilde{C}^p \sum_l   \Big( \sum_{ \widetilde{Q}_{\alpha}^{j,l} \in B_l }
 \sum_{\substack{Q_\alpha^{k+N}\in B_l\\ Q_{\alpha}^{k+N}\subseteq
\widetilde{Q}_{\alpha}^{j,l}}}  \mu(Q_\alpha^{k+N}) |{\widetilde D}_k(g)(x_\alpha^{k+N}) |^2 \Big)^{p/2}  \Big(\sum_{ \widetilde{Q}_{\alpha}^{j,l}\in B_l } \mu(\widetilde{Q}_{\alpha}^{j,l}) \Big)^{1-p/2}\\
&\leq& \widetilde{C}^p \sum_l  \Big(
 \sum_{Q_\alpha^{k+N}\in B_l}  \mu(Q_\alpha^{k+N}) |{\widetilde D}_k(g)(x_\alpha^{k+N}) |^2 \Big)^{p/2} \mu(\widetilde{\Omega}_l)^{1-p/2},
\end{eqnarray*}
where the fact that if $\widetilde{Q}_{\alpha}^{j,l}\in B_l$ then $\widetilde{Q}_{\alpha}^{j,l}\subset \widetilde{\Omega}_l$ is used in the last inequality.
Note that
\begin{eqnarray*}
\int_{\widetilde{\Omega}_l\backslash \Omega_{l+1}} {\widetilde{S}}(f)(x)^2 d\mu(x)
\leq 2^{(l+1)2}\mu(\widetilde{\Omega}_l)\leq C2^{2l}\mu(\Omega_l).
\end{eqnarray*}
While we also have
\begin{eqnarray*}
\int_{\widetilde{\Omega}_l\backslash \Omega_{l+1}} {\widetilde{S}}(f)(x)^2 d\mu(x)
&=&    \int_{\widetilde{\Omega}_l\backslash \Omega_{l+1}} \sum_{k}\sum_{\alpha\in \mathscr{X}^{k+N}}
 |{\widetilde D}_k(g)(x_\alpha^{k+N})|^2\chi_{Q_\alpha^{k+N}}(x)  d\mu(x)\\
&\geq& \sum_{Q_{\alpha}^{k+N}\in B_l}
 |{\widetilde D}_k(g)(x_\alpha^{k+N})|^2 \mu(Q_\alpha^{k+N} \cap (\widetilde{\Omega}_l\backslash \Omega_{l+1}))\\
&\geq& {1\over 2} \sum_{Q_\alpha^{k+N}\in B_l}  \mu(Q_\alpha^{k+N}) |{\widetilde D}_k(g)(x_\alpha^{k+N}) |^2.
\end{eqnarray*}
As a consequence, we obtain
\begin{eqnarray*}
 \sum_{Q_\alpha^k\in B_l}  \mu(Q_\alpha^k) |{\widetilde D}_k(g)(x_\alpha^k) |^2\leq  C2^{2l}\mu(\Omega_l),
\end{eqnarray*}
which implies that
\begin{eqnarray*}
\sum_l\sum_{j: \widetilde{Q}_{\alpha}^{j,l}} |\lambda_{l,\widetilde{Q}_{\alpha}^{j,l}}|^p
&\leq& \widetilde{C}^p \sum_l  2^{lp} \mu(\Omega_l)^{p/2} \mu(\Omega_l)^{1-p/2}\\
&\leq& C\|\widetilde{S}(f)\|_{L^p}^p
\leq C\|f\|_{H^p}^p.
\end{eqnarray*}

We have proved that if $f\in L^2(X)\cap H^p(X),$ then $f$ has an atomic decomposition. But we still need to show that the atomic decomposition for $f$ obtained above must converge in the dual of $\mathcal C_{\frac{1}{p}-1}(X).$ This follows from the following
\begin{lemma}\label{lipcmop6}
    Suppose that $(X,d,\mu)$ is space of homogeneous type in the sense of Coifman and Weiss. Let $\omega$
    is the upper dimension of~$(X,d,\mu)$ and $\frac{\omega}{\omega+\eta}<p\leq 1,$ where $\eta$ is the H\"{o}lder
    exponent of wavelets $\psi_{\alpha}^{k}.$ Then $\mathcal C_{\frac{1}{p}-1}(X)\subset \cmo^p(X).$ Moreover, there exists a constant $C$ such that
    $$\parallel f\parallel_{\cmo^p}\leq C\parallel f\parallel_{\mathcal C_{\frac{1}{p}-1}}.$$
\end{lemma}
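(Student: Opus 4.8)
The plan is to reduce the $\cmo^p$ bound to a Carleson-type estimate for the wavelet coefficients $\langle\psi_\alpha^k,f\rangle$ and to control those by decomposing $f$, on a fixed dyadic ball, into a constant, a local $L^2$ piece, and a far tail. First, a function $f\in\mathcal C_{\frac1p-1}(X)$ defines an element of $(\GG(\beta,\gamma))'$ via $\varphi\mapsto\int_X f\varphi\,d\mu$ for mean-zero test functions $\varphi$: the integral converges absolutely because the at-most-polynomial growth of $f-f_{B(x_0,1)}$ and of $\mu$ is dominated by the decay of $\varphi$, and the hypothesis $p>\frac{\omega}{\omega+\eta}$, i.e. $\frac1p-1<\frac\eta\omega$, is precisely what makes this work (as with the condition $\eta>\omega(\frac1p-1)$ in Lemma~\ref{lemma:Lip(p,2)}). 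For the same reason each $\langle\psi_\alpha^k,f\rangle=\int_X\psi_\alpha^k(x)\big(f(x)-c\big)\,d\mu(x)$ is well defined and, by the cancellation \eqref{cancellation}, independent of $c$. Now fix a dyadic ball $Q=Q_\beta^{k_0}$; the Auscher--Hyt\"onen cubes satisfy $\mu(Q)\sim\mu(B(x_\beta^{k_0},\delta^{k_0}))$, and $Q_\alpha^k\subset Q$ forces $k\ge k_0$, hence $\delta^k\lesssim r_Q$ with $r_Q\sim\delta^{k_0}$. Choose a ball $B\supset Q$ with $\mu(B)\sim\mu(Q)$ and a fixed large dilate $\widehat B=\lambda B$, and write $f=f_B+g+h$ with $g=(f-f_B)\chi_{\widehat B}$ and $h=(f-f_B)\chi_{X\setminus\widehat B}$. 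By \eqref{cancellation} we have $\langle\psi_\alpha^k,f\rangle=\langle\psi_\alpha^k,g\rangle+\langle\psi_\alpha^k,h\rangle$, so it suffices to bound $\sum_{Q_\alpha^k\subset Q}|\langle\psi_\alpha^k,g\rangle|^2$ and $\sum_{Q_\alpha^k\subset Q}|\langle\psi_\alpha^k,h\rangle|^2$ each by $C\,\mu(Q)^{\frac2p-1}\|f\|_{\mathcal C_{\frac1p-1}}^2$.

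For the local piece $g$ I would use orthogonality. The elementary \emph{averages inequality} $|f_{B_1}-f_{B_2}|\le(\mu(B_2)/\mu(B_1))^{1/2}\|f\|_{\mathcal C_\alpha}\mu(B_2)^\alpha$ for $B_1\subset B_2$ (Cauchy--Schwarz applied to $\frac1{\mu(B_1)}\int_{B_1}|f-f_{B_2}|$, then \eqref{Lip space} on $B_2$), combined with \eqref{Lip space} on $\widehat B$ and the doubling property, gives $\|g\|_{L^2}^2=\int_{\widehat B}|f-f_B|^2\,d\mu\le C\mu(\widehat B)^{1+2\alpha}\|f\|_{\mathcal C_\alpha}^2\le C\mu(Q)^{\frac2p-1}\|f\|_{\mathcal C_\alpha}^2$, where $\alpha=\frac1p-1$ and $1+2\alpha=\frac2p-1$. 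Since $\{\psi_\alpha^k\}$ is an orthonormal basis of $L^2(X)$ (Theorem~A), Bessel's inequality yields $\sum_{Q_\alpha^k\subset Q}|\langle\psi_\alpha^k,g\rangle|^2\le\sum_{k,\alpha}|\langle\psi_\alpha^k,g\rangle|^2=\|g\|_{L^2}^2\le C\mu(Q)^{\frac2p-1}\|f\|_{\mathcal C_\alpha}^2$. I stress that this orthogonality step is essential at the endpoint $p=1$: there the naive pointwise bound $|\langle\psi_\alpha^k,f\rangle|\lesssim\|f\|\mu(Q_\alpha^k)^{1/2}$ would make $\sum_{Q_\alpha^k\subset Q}|\langle\psi_\alpha^k,f\rangle|^2$ diverge, since it contains $\sum_{k\ge k_0}\sum_{\alpha:Q_\alpha^k\subset Q}\mu(Q_\alpha^k)\sim\sum_{k\ge k_0}\mu(Q)$.

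The far piece $h$ is the main point, and it is here that the \emph{super-polynomial} decay \eqref{exponential decay} of the Auscher--Hyt\"onen wavelets is used. For $Q_\alpha^k\subset Q$ we have $x_\alpha^k\in Q$ and $\delta^k\lesssim r_Q$, so $X\setminus\widehat B$ is contained in $\bigcup_{m\ge1}A_m$ with $A_m=\{x:2^{m-1}\lambda r_Q\le d(x_\alpha^k,x)<2^m\lambda r_Q\}$, and on $A_m$ the bound \eqref{exponential decay} gives $|\psi_\alpha^k(x)|\le C\,\mu(B(x_\alpha^k,\delta^k))^{-1/2}\exp\!\big(-\nu(2^m\lambda r_Q/\delta^k)^a\big)$. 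Estimating $\int_{B(x_\alpha^k,2^m\lambda r_Q)}|f-f_B|\,d\mu$ by \eqref{Lip space}, the averages inequality, and the doubling property \eqref{upper dimension} produces an upper bound of the form $\|f\|_{\mathcal C_\alpha}\,(2^m\lambda r_Q/\delta^k)^{\kappa}\,\mu(B(x_\alpha^k,\delta^k))^{1+\alpha}$ with $\kappa=\kappa(\omega,p)$ a fixed exponent; since the stretched exponential dominates any polynomial, summing over $m\ge1$ gives
\[
   |\langle\psi_\alpha^k,h\rangle|\ \le\ C\,\|f\|_{\mathcal C_{\frac1p-1}}\,\mu(B(x_\alpha^k,\delta^k))^{\frac12+\alpha}\,\exp\!\big(-c\,(r_Q/\delta^k)^a\big).
\]
Finally, at each fixed $k\ge k_0$ the cubes $Q_\alpha^k\subset Q$ are pairwise disjoint, so $\sum_{\alpha:Q_\alpha^k\subset Q}\mu(Q_\alpha^k)\le\mu(Q)$; using $\mu(B(x_\alpha^k,\delta^k))\sim\mu(Q_\alpha^k)\le\mu(Q)$, $1+2\alpha=\frac2p-1$, and $(r_Q/\delta^k)^a\gtrsim\delta^{-(k-k_0)a}$,
\[
   \sum_{\alpha:Q_\alpha^k\subset Q}|\langle\psi_\alpha^k,h\rangle|^2\ \le\ C\,\|f\|_{\mathcal C_\alpha}^2\,\mu(Q)^{2\alpha}\,e^{-c\,\delta^{-(k-k_0)a}}\!\!\!\sum_{\alpha:Q_\alpha^k\subset Q}\!\!\!\mu(Q_\alpha^k)\ \le\ C\,\|f\|_{\mathcal C_\alpha}^2\,\mu(Q)^{\frac2p-1}\,e^{-c\,\delta^{-(k-k_0)a}},
\]
and $\sum_{k\ge k_0}e^{-c\delta^{-(k-k_0)a}}<\infty$. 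Adding the local and far estimates and taking the supremum over $Q$ gives $\mathcal C_p(f)\le C\|f\|_{\mathcal C_{\frac1p-1}}$, proving $\mathcal C_{\frac1p-1}(X)\subset\cmo^p(X)$ with the asserted norm control.

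I expect the main obstacle to be the non-compact support of the wavelets, i.e.\ controlling the tail $h$: one must show these tails still produce a convergent Carleson sum even though the averages of a Campanato function grow at infinity. This dictates the two nested geometric summations above --- first in the annulus index $m$, then in the scale $k$ --- and genuinely uses the exponential decay in \eqref{exponential decay}; a merely polynomial decay would need a more delicate argument. A secondary point is the endpoint $p=1$ (i.e.\ $\bmo$), where the local piece must be handled by $L^2$-orthogonality rather than by a pointwise coefficient estimate. (For $p<1$ one could alternatively bound every coefficient pointwise by $C\|f\|_{\mathcal C_\alpha}\mu(Q_\alpha^k)^{1/2+\alpha}$ and sum a convergent geometric series in $k$, but the argument above treats the whole range uniformly.)
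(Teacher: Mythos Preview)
Your argument is correct and follows essentially the same route as the paper: split $f$ on a fixed dyadic cube $Q$ into a constant, a local $L^2$ piece supported on a bounded dilate of $Q$, and a far tail; kill the constant by the wavelet cancellation, handle the local piece by Bessel's inequality (exactly as the paper does for its $f_1$), and control the tail by decay of $\psi_\alpha^k$ together with annular Campanato estimates.

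The one genuine difference is in the tail step. You invoke the exponential decay \eqref{exponential decay} directly and sum a stretched-exponential series in $m$ and then in $k$. The paper instead uses the result from \cite{HLW} that $\mu(Q_\alpha^k)^{-1/2}\psi_\alpha^k\in\GGs(\eta,\gamma)$ for every $\gamma>0$, chooses $\gamma>\omega(\tfrac2p-2)$, and bounds $|\langle\psi_\alpha^k,f_2\rangle|^2$ via H\"older against the test-function kernel; the $k$-sum then becomes a geometric series $\sum_{k\ge k_0}\delta^{\gamma(k-k_0)}$ and the annular sum converges because $\gamma>\omega(\tfrac2p-2)$. Both work, but this shows your closing remark is a bit off: polynomial decay of sufficiently high order already suffices, and the paper's argument is no more delicate than yours. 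Your approach has the minor advantage of not importing the test-function membership from \cite{HLW}; the paper's has the advantage of making transparent exactly what decay exponent is needed, namely $\gamma>\omega(\tfrac2p-2)$.
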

    Assuming this lemma for the moment, we show a general argument that if $f$ has an atomic decomposition which converges in the norm of $H^p(X)$ then this decomposition also converges in the dual of ${\mathcal C_{\frac{1}{p}-1}(X)}.$ Indeed, suppose that $f$ has an atomic decomposition with $f=\sum_{j\geq 1}\lambda_j a_j(x)$ where the series converges in the norm of $H^p(X).$ Let $g$ be in ${\mathcal C_{\frac{1}{p}-1}(X)}.$ By Lemma \ref{lipcmop6}, $g\in \cmo^p(X)$ and $\|g\|_{\cmo^p}\leq C\|g\|_{\mathcal C_{\frac{1}{p}-1}}.$ Thus, by the duality in Theorem C,
    $$|<f-\sum_{j\leq N}\lambda_j a_j, g>|\leq C\|f-\sum_{j\leq N}\lambda_j a_j\|_{H^p}\|g\|_{\cmo^p}\leq C\|f-\sum_{|j|\leq N}\lambda_j a_j\|_{H^p}\|g\|_{\mathcal C_{\frac{1}{p}-1}},$$
    where the last term tends to zero as $N$ tends to infinity since $\|f-\sum_{j\leq N}\lambda_j a_j\|_{H^p}$ tends to zero as $N$ tends to infinity. This implies that $\sum_{j\geq 1}\lambda_j a_j(x)$ converges to $f$ in the dual of $\mathcal C_{\frac{1}{p}-1}(X).$ Note that what we have proved is that if $f\in H^p(X)$ then $f$ has an atomic decomposition and moreover, this decomposition converges in both of the norm of $L^2(X)$ and the norm of $H^p(X).$ Therefore, by the above general argument, the atomic decomposition of $f$ also converges in the dual of $\mathcal C_{\frac{1}{p}-1}(X)$ and hence $f$ belongs to the atomic Hardy space $H^p_{cw}(X)$ in the sense of Coifman and Weiss.

    We now prove Lemma \ref{lipcmop6}. The idea of the proof of Lemma \ref{lipcmop6} is similar to the proof on $\mathbb R^n$ given by C. Fefferman \cite {FS}.
\begin{proof}[Proof of Lemma \ref{lipcmop6}]
For any fixed quasi-dyadic ball $Q=Q_{\alpha_0}^{k_0}$ in the sense of Auscher and Hyt\"onen, let $Q^*=2A_0Q$, the ball centered at the same center as $Q$ with the radius as $2A_0$ times of $Q.$ We denote  $f_Q=\frac{1}{\mu(Q)}\int_Q f(x)d\mu(x)$ and write $f=f_1+f_2+f_3,$ where $f_1=(f-f_{Q^*})\chi_{Q^*}, f_2=(f-f_{Q^*})\chi_{(Q^*)^c}$ and $f_3=f_{Q^*}$. Then $\langle f, \psi_\alpha^k\rangle=\langle f_1, \psi_\alpha^k\rangle+\langle f_2, \psi_\alpha^k\rangle$ since $\int_X\psi_\alpha^k(x)d\mu(x)=0$. This implies that
$\parallel f\parallel_{\mathrm{CMO}^p}\lesssim\parallel f_1\parallel_{\mathrm{CMO}^p}+\parallel f_2\parallel_{\mathrm{CMO}^p}$.

The estimate for $\parallel f_1\parallel_{\mathrm{CMO}^p}$ follows directly from the following
\begin{eqnarray*}
            \Big\{ {{1}\over{\mu(Q)^{{2\over p }- 1}}}
            \sum_{k\in\mathbb{Z}, \alpha\in\mathscr{Y}^{k},
                 Q_{\alpha}^{k} \subset Q}
            \big| \langle \psi_{\alpha}^{k}, f_1 \rangle
            \big|^2 \Big\}^{1/2}
            &\lesssim&{1\over\mu(Q)^{{1\over p} -\frac{1}{2} }}
                       \Big\{\int_{X} \big| f_1(x)\big|^2 d\mu(x) \Big\}^{1/2}\\
             &\lesssim&{1\over\mu(Q)^{{1\over p} -1 }}
                        \Big\{\frac{1}{\mu( Q)}\int_{Q^*}|f(x)-f_{Q^*}|^2 d\mu(x)\Big\rbrace^{\frac{1}{2}}\\
             &\lesssim& C \parallel f\parallel_{\mathcal C_{\frac{1}{p}-1}},
        \end{eqnarray*}
        where the doubling property on $\mu$ is used.
     To estimate $\parallel f_2\parallel_{\mathrm{CMO}^p(X)},$ observe that, by a result in \cite{HLW}, $\mu(Q_\alpha^k)^{-\frac{1}{2}}\psi_\alpha^k\in \GGs(\eta,\gamma)$ for any $\gamma>0,$
     particularly, we will take $\gamma>\omega(\frac{2}{p}-2).$ Thus,
     \begin{eqnarray*}|\langle f_2, \psi_\alpha^k\rangle|&\lesssim&\mu(Q_\alpha^k)^\frac{1}{2} \int_{(Q^*)^c}|f(x)-f_{Q^*}|{1\over V_{\delta^k}(x) + V(x,x_\alpha^k)}\Big({\delta^k\over \delta^{k} + d(x,x_\alpha^k)}\Big)^{\gamma}d\mu(x).
     \end{eqnarray*}
     Applying H\"older inequality together with the fact that
     $$\int_{X}{1\over V_{\delta^k}(x) + V(x,x_\alpha^k)}\Big({\delta^k\over \delta^{k} + d(x,x_\alpha^k)}\Big)^{\gamma}d\mu(x)\leq C$$ implies that
     \begin{eqnarray*}|\langle f_2, \psi_\alpha^k\rangle|^2&\lesssim&\mu(Q_\alpha^k) \int_{(Q^*)^c}|f(x)-f_{Q^*}|^2{1\over V_{\delta^k}(x) + V(x,x_\alpha^k)}\Big({\delta^k\over \delta^{k} + d(x,x_\alpha^k)}\Big)^{\gamma}d\mu(x).
     \end{eqnarray*}
     Next, as is easily seen, if $x\in (Q^*)^c$ and $Q_{\alpha}^{k} \subset Q$
     \begin{eqnarray*}
      {1\over V_{\delta^k}(x) + V(x,x_\alpha^k)}\Big({\delta^k\over \delta^{k} + d(x,x_\alpha^k)}\Big)^{\gamma}
      \lesssim {1\over V(x_{\alpha_0}^{k_0},x)}\Big({\delta^{k}\over \delta^{k_0} + d(x_{\alpha_0}^{k_0},x)}\Big)^{\gamma}.
     \end{eqnarray*}
     Therefore, we obtain
\begin{eqnarray*}
        &&\bigg\{ {1\over\mu(Q)^{{2\over p} - 1}}
            \sum_{k\in\mathbb{Z}, \alpha\in\mathscr{Y}^{k},
                 Q_{\alpha}^{k} \subset Q}
            \big| \langle \psi_{\alpha}^{k}, f_2 \rangle
            \big|^2 \bigg\}^{1/2}\\
     &\lesssim& \bigg\{ {1\over\mu(Q)^{{2\over p} -1}}\sum\limits_{k=k_0}^\infty
            \sum_{\alpha\in\mathscr{Y}^{k}:
                 Q_{\alpha}^{k} \subset Q} \mu(Q^k_\alpha)\int_{(Q^*)^c}|f(x)-f_{Q^*}|^2
                 {1\over V(x_{\alpha_0}^{k_0},x)}\Big({\delta^{k}\over \delta^{k_0} + d(x_0,x)}\Big)^{\gamma}
                 d\mu(x)\bigg\}^{1/2}\\
                 &\lesssim& \bigg\{\sum\limits_{k=k_0}^\infty {\delta^{\gamma(k-k_0)}\over\mu(Q)^{{2\over p} -2}}
                 \int_{(Q^*)^c}|f(x)-f_{Q^*}|^2
                 {1\over V(x_{\alpha_0}^{k_0},x)}\Big({\delta^{k_0}\over \delta^{k_0} + d(x_{\alpha_0}^{k_0},x)}\Big)^{\gamma}
                 d\mu(x)\bigg\}^{1/2}.        \end{eqnarray*}
     We claim that
     \begin{eqnarray*}
     \bigg\{{1\over\mu(Q)^{{2\over p} -2}}
                 \int_{(Q^*)^c}|f(x)-f_{Q^*}|^2
                 {1\over V(x_{\alpha_0}^{k_0},x)}\Big({\delta^{k_0}\over \delta^{k_0} + d(x_{\alpha_0}^{k_0},x)}\Big)^{\gamma}
                 d\mu(x)\bigg\}^{1/2}\lesssim ||f||_{{\mathcal C}_{\frac{1}{p}-1}},
     \end{eqnarray*}
     which gives the desired estimate for $\parallel f_2\parallel_{\mathrm{CMO}^p(X)}$ and hence the proof of Lemma \ref{lipcmop6} is concluded. To verify the claim, let $Q_j^{**}=\delta^{-j-1}Q^*$ for $j\in\mathbb{N}$, we have
\begin{eqnarray*}&&\bigg\{{1\over\mu(Q)^{{2\over p} -2}}
                 \int_{(Q^*)^c}|f(x)-f_{Q^*}|^2
                 {1\over V(x_{\alpha_0}^{k_0},x)}\Big({\delta^{k_0}\over \delta^{k_0} + d(x_{\alpha_0}^{k_0},x)}\Big)^{\gamma}
                 d\mu(x)\bigg\}^{1/2}\\
     &\lesssim&\bigg\{{1\over\mu(Q)^{{2\over p} -2}} \sum\limits_{j=0}^\infty\delta^{j\gamma}\int_{\{x:2A_0\delta^{k_0-j}\leq d(x,x_{\alpha_0}^{k_0}) <2A_0\delta^{k_0-j-1}\}}|f(x)-f_{Q_j^{**}}|^2 {1\over V(x_{\alpha_0}^{k_0},x)} d\mu(x)\bigg\}^{1/2}\\&+&\bigg\{{1\over\mu(Q)^{{2\over p} -2}} \sum\limits_{j=0}^\infty\delta^{j\gamma}\int_{\{x:2A_0\delta^{k_0-j}\leq d(x,x_{\alpha_0}^{k_0}) <2A_0\delta^{k_0-j-1}\}}|f_{Q_j^{**}}-f_{Q^*}|^2 {1\over V(x_{\alpha_0}^{k_0},x)} d\mu(x)\bigg\}^{1/2}
     \\
     &=:&I+II. \end{eqnarray*}For $I,$ by the doubling property on $\mu,$ we have
     \begin{eqnarray*}
     I     &\lesssim&\bigg\{{1\over\mu(Q)^{{2\over p} -2}} \sum\limits_{j=0}^\infty\delta^{j\gamma}{1\over  \mu(Q_j^{**})}\int_{Q_j^{**}}|f(x)-f_{Q_j^{**}}|^2                d\mu(x)\bigg\}^{1/2}
     \\
     &\lesssim&
            \Big\{           \sum\limits_{j=0}^\infty\delta^{j\gamma}{\mu(Q_j^{**})^{{2\over p} -2 }\over \mu(Q)^{{2\over p} -2} }
            \Big\}^{1/2}\parallel f\parallel_{C^{\frac{1}{p}-1}}\\
     &\lesssim&
            \Big\{           \sum\limits_{j=0}^\infty\delta^{j\gamma}\delta^{-j\omega\big(\frac{2}{p}-2\big)}
            \Big\}^{1/2}\parallel f\parallel_{C^{\frac{1}{p}-1}}
             \\&\lesssim &\parallel f\parallel_{C^{\frac{1}{p}-1}}
        \end{eqnarray*}since $\gamma>\omega(\frac{2}{p}-2)$.
      The estimate for $II$ follows directly from the doubling property on $\mu.$ Indeed,
        \begin{eqnarray*}
     II&\lesssim &\bigg\{{1\over\mu(Q)^{{2\over p} -2}} \sum\limits_{j=0}^\infty\delta^{j\gamma}|f_{Q_j^{**}}-f_{Q^*}|^2\frac{\mu(Q_j^{**})}{\mu(Q_{j-1}^{**})}
     \bigg\}^{1/2}
     \\
     &\lesssim&\bigg\{{1\over\mu(Q)^{{2\over p} -2}}\sum\limits_{j=0}^\infty\delta^{j\gamma} {1\over  \mu(Q^{*})}\int_{Q^{*}}|f(x)-f_{Q_j^{**}}|^2d\mu(x)\bigg\}^{1/2}
     \\
     &\lesssim&\bigg\{\sum\limits_{j=0}^\infty\delta^{j\gamma}{\mu(Q_j^{**})^{{2\over p} -1}\over\mu(Q)^{{2\over p} -2}} {1\over  \mu(Q^{*})}{1\over  \mu(Q_j^{**})^{{2\over p} -1}}\int_{Q_j^{**}}|f(x)-f_{Q_j^{**}}|^2d\mu(x)\bigg\}^{1/2}
                    \\
     &\lesssim&            \Big\{           \sum\limits_{j=0}^\infty\delta^{j\gamma}\delta^{-j\omega\big(\frac{2}{p}-2\big)}
            \Big\}^{1/2}\parallel f\parallel_{C^{\frac{1}{p}-1}}
             \\&\lesssim &\parallel f\parallel_{C^{\frac{1}{p}-1}},
        \end{eqnarray*}where the fact that $\gamma>\omega(\frac{2}{p}-2)$ is used. The proof of the claim is complete and hence the proof of Lemma \ref{lipcmop6} is concluded.
\end{proof}

\section{Criterion of the boundedness for singular integrals on the Hardy spaces}\label{sec:proof of theorem 1.9}

\subsection{Molecule theory on spaces of homogeneous type}\label{subsec:Molecule theory on spaces of homogeneous type}
As mentioned in Section 1, we develop the theory of molecule on spaces of homogeneous type in the sense of Coifman and Weiss. Let $(X,d,\mu)$ be a space of homogeneous type. We define the molecule which depends only on the measure $\mu$. Since we do not have any conditions on the measure other than the doubling condition, we applying a stopping time argument in proving the molecule theory, which is new comparing to all the previous related versions of molecule theory.

\begin{definition}\label{def-molecule}
Suppose $ {\omega\over \omega+\eta} <p\leq1$. A function $m(x)\in L^2(X)$ is said to be a $(p,2,\epsilon)$ molecule if $\epsilon>0$, $ {\omega\over \omega+\eta-\epsilon} <p\leq1$  and
\begin{eqnarray}\label{molecule}
\Big( \int_X m(x)^2 d\mu(x) \Big)\Big( \int_X m(x)^2 V(x,x_0)^{1+{2\eta-2\epsilon\over \omega}} d\mu(x)\Big)^{({\omega+2\eta-2\epsilon\over \omega}{p\over 2-p}-1)^{-1}}\leq 1,
\end{eqnarray}
where $x_0$ is a fixed point in $X$,  $\omega$ is the upper dimension of $\mu$.
\end{definition}
Note that the fact that $ {\omega\over \omega+\eta-\epsilon} <p$ implies ${\omega+2\eta-2\epsilon\over \omega}{p\over 2-p}-1>0$ and moreover, the quasi metric $d$ is not used in the definition of molecules.
Next we show that each $(p,2,\epsilon)$ molecule $m(x)$ belongs to $H^p(X).$
\begin{theorem}\label{moleculeinHp}
Suppose that $m$ is an $(p,2,\epsilon)$ molecule. Then $m\in H^p(X)$ and moreover
$$\|m\|_{H^p}\leq C,$$
where the constant $C$ is independent of $m.$
\end{theorem}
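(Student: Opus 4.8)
The plan is to deduce the molecule estimate from the atomic estimate $\|S(a)\|_{L^p}\le C$ established in Section~2 (the claim~\eqref{def:claim}), by decomposing a $(p,2,\epsilon)$ molecule $m$ into $(p,2)$-atoms with $\ell^p$-summable coefficients. Set $\tau:=\tfrac{2(\eta-\epsilon)}{\omega}$, $A:=\int_Xm^2\,d\mu$ and $B:=\int_Xm^2\,V(x,x_0)^{1+\tau}\,d\mu$, so that the normalization \eqref{molecule} reads $A\,B^{\sigma}\le1$ with $\sigma=(\tfrac{\omega+2\eta-2\epsilon}{\omega}\tfrac{p}{2-p}-1)^{-1}=\tfrac{2-p}{(2+\tau)p-2}$; the hypothesis $p>\tfrac{\omega}{\omega+\eta-\epsilon}$ says exactly that $\sigma>0$, equivalently that the exponent $e:=\tfrac1p-1-\tfrac{\tau}{2}$ is negative. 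I will also use the cancellation $\int_Xm\,d\mu=0$ that forms part of the definition of a molecule (as in Coifman--Weiss); this is essential, since without it a normalized characteristic function of a ball would satisfy \eqref{molecule} yet fail to lie in $H^p(X)$.

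First I would fix the scale $V_*:=(B/A)^{1/(1+\tau)}$ and build, by a stopping-time argument in the radius, an increasing chain of balls about $x_0$: let $R_0=\inf\{r:\mu(B(x_0,r))\ge V_*\}$, $\mathcal B_0=\{y:d(x_0,y)\le R_0\}$, and inductively $R_{j+1}=\inf\{r:\mu(B(x_0,r))\ge2\mu(\mathcal B_j)\}$, $\mathcal B_{j+1}=\{y:d(x_0,y)\le R_{j+1}\}$. Since $\mu(X)=\infty$ this never terminates, and $B(x_0,R_{j+1})=\bigcup_{r<R_{j+1}}B(x_0,r)$ together with the doubling property give $2\mu(\mathcal B_j)\le\mu(\mathcal B_{j+1})\le C\,\mu(\mathcal B_j)$ and $\mu(\mathcal B_0)\sim V_*$; in particular $\mu(\mathcal B_j)\ge2^{j}\mu(\mathcal B_0)$. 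Only this one-sided growth is used — the measure need not grow at a fixed geometric rate, which is precisely why the stopping time must replace the naive dyadic annuli. With $E_0=\mathcal B_0$ and $E_j=\mathcal B_j\setminus\mathcal B_{j-1}$ one has $V(x,x_0)\sim\mu(\mathcal B_j)$ for $x\in E_j$, $j\ge1$, whence
\begin{eqnarray*}
\|m\chi_{E_j}\|_{L^2}^2\le A\quad(j\ge0),\qquad\qquad \|m\chi_{E_j}\|_{L^2}^2\lesssim\mu(\mathcal B_j)^{-(1+\tau)}B\quad(j\ge1).
\end{eqnarray*}

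Next I would write $m=\sum_{j\ge0}\big(m\chi_{E_j}-\tfrac{N_j}{\mu(\mathcal B_j)}\chi_{\mathcal B_j}\big)+\sum_{j\ge0}\tfrac{N_j}{\mu(\mathcal B_j)}\chi_{\mathcal B_j}$ with $N_j=\int_{E_j}m\,d\mu$. Each term of the first sum is supported in the ball $\mathcal B_j$, has vanishing mean, and has $L^2$-norm $\lesssim\|m\chi_{E_j}\|_{L^2}$; since $\sum_jN_j=\int_Xm=0$, Abel summation turns the second sum into $\sum_{j\ge0}P_j\big(\tfrac{\chi_{\mathcal B_j}}{\mu(\mathcal B_j)}-\tfrac{\chi_{\mathcal B_{j+1}}}{\mu(\mathcal B_{j+1})}\big)$ with $P_j=\sum_{i>j}N_i$ (the boundary term drops out since $|P_j|\to0$), each bracket being supported in $\mathcal B_{j+1}$, of vanishing mean and of $L^2$-norm $\lesssim\mu(\mathcal B_j)^{-1/2}$. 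Renormalizing each piece to a $(p,2)$-atom and using $|N_i|\le\mu(E_i)^{1/2}\|m\chi_{E_i}\|_{L^2}$ — hence $|P_j|\lesssim B^{1/2}\mu(\mathcal B_j)^{-\tau/2}$ by summing a geometric series via $\mu(\mathcal B_i)\ge2^{i-j}\mu(\mathcal B_j)$ — one finds that every coefficient attached to an index $j\ge1$ is $\lesssim B^{1/2}\mu(\mathcal B_j)^{e}$, while the two coefficients attached to $j=0$ are $\lesssim V_*^{1/p-1/2}A^{1/2}$. Therefore, since $e<0$ and $\mu(\mathcal B_j)\ge2^j\mu(\mathcal B_0)\sim2^jV_*$,
\begin{eqnarray*}
\sum_\nu|\lambda_\nu|^p\ \lesssim\ V_*^{1-p/2}A^{p/2}+B^{p/2}\sum_{j\ge1}\mu(\mathcal B_j)^{ep}\ \lesssim\ V_*^{1-p/2}A^{p/2}+B^{p/2}V_*^{ep},
\end{eqnarray*}
and substituting $V_*=(B/A)^{1/(1+\tau)}$ and using $\sigma=\tfrac{1-p/2}{p(1+\tau/2)-1}$ one checks that both terms equal a fixed positive power of $A\,B^{\sigma}$, hence are $\le C$ by \eqref{molecule}.

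It remains to conclude. The decomposition $m=\sum_\nu\lambda_\nu a_\nu$ converges absolutely in $L^2(X)$ (the $L^2$-norms of the pieces form a convergent series), so pairing against any wavelet $\psi_\alpha^k\in L^2(X)$ distributes, $\langle\psi_\alpha^k,m\rangle=\sum_\nu\lambda_\nu\langle\psi_\alpha^k,a_\nu\rangle$, and the triangle inequality in $\ell^2$ gives $S(m)\le\sum_\nu|\lambda_\nu|\,S(a_\nu)$ pointwise; consequently $\|m\|_{H^p}^p=\|S(m)\|_{L^p}^p\le\sum_\nu|\lambda_\nu|^p\,\|S(a_\nu)\|_{L^p}^p\lesssim\sum_\nu|\lambda_\nu|^p\le C$, using the $p$-subadditivity of $\|\cdot\|_{L^p}^p$ and the atomic estimate of Section~2. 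The main obstacle is the coefficient bookkeeping of the previous paragraph: one must run the stopping time so that only the lower bound $\mu(\mathcal B_j)\gtrsim2^j\mu(\mathcal B_0)$ is invoked, combine the power of $\mu(\mathcal B_j)$ coming from the atom normalization with the one coming from the molecular decay into the single exponent $e=\tfrac1p-1-\tfrac{\tau}{2}$, observe that $e<0$ is precisely the hypothesis $p>\tfrac{\omega}{\omega+\eta-\epsilon}$ (so the geometric series converges), and see that the choice $\mu(\mathcal B_0)\sim(B/A)^{1/(1+\tau)}$ balances the local and the tail contributions so that the whole sum is controlled by the scale-invariant quantity $A\,B^{\sigma}$ of \eqref{molecule}.
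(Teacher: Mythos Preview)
Your proof is correct and follows essentially the same route as the paper's: a stopping-time construction of balls about $x_0$ whose measures grow at least geometrically, a splitting of $m$ into mean-zero annular pieces plus correction terms handled by Abel summation and the cancellation $\int_X m\,d\mu=0$, and a verification that the resulting $(p,2)$-atom coefficients are $\ell^p$-summable with bound controlled by the molecular quantity $AB^\sigma$. The only cosmetic differences are that the paper spreads each correction over the annulus rather than the full ball and concludes via Theorem~\ref{thm 1.1} instead of the pointwise inequality $S(m)\le\sum_\nu|\lambda_\nu|S(a_\nu)$; your explicit remark that the cancellation $\int_X m\,d\mu=0$ must be part of the molecule definition (it is used in the paper's proof but is not stated in Definition~\ref{def-molecule}) is well taken.
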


\begin{proof}
The basic idea of the proof is to decompose $m$ into a sum of atoms. For this purpose, we first set $\sigma=\|m\|_{L^2}^{-\alpha}$, where $\alpha={2p\over (2-p) \omega}$.

First, we point out that $B(x_0,2^{i+1}\sigma) \rightarrow X$ when $i$ tends to $+\infty$, and that $\mu(X)=+\infty$. Thus, there exists an integer $i_0$ such that
\begin{eqnarray}\label{molecule i0}
\mu(B(x_0,2^{i_0+1}\sigma))>\sigma^{\omega} \textup{\ \ \   and\ \ \   }
\mu(B(x_0,2^{i_0}\sigma))\leq \sigma^{\omega}.
\end{eqnarray}

Set
$$\chi_0=B(x_0, 2^{i_0}\sigma)=\{x\in X: d(x,x_0) < 2^{i_0}\sigma \} $$
and for $i\geq 1,$
$$
  \chi_i=B(x_0,2^i2^{i_0}\sigma)\backslash B(x_0,2^{i-1}2^{i_0}\sigma)=\{x\in X:  2^{i-1} 2^{i_0}\sigma \leq d(x,x_0) < 2^i 2^{i_0}\sigma\}.
$$
Let $\chi_i(x)$ be the characteristic function on $\chi_i, i=0,1,2,...$.

We claim that there exists an integer $j_1\geq 1$ such that
$$
  \mu\Big(\bigcup_{\ell=1}^{j_1} \chi_\ell \Big) > \mu(\chi_0)
$$
and
$$
  \mu\Big(\bigcup_{\ell=1}^{j_1-1} \chi_\ell \Big) \leq \mu(\chi_0). \ \ \ \textup{If $j_1=1$, then this does not apply}.
$$
To verify the claim, suppose that such $j_1$ does not exist. Then for every integer $j>1$, we should have $\mu\Big(\bigcup_{\ell=1}^{j} \chi_\ell \Big) \leq \mu(\chi_0)$. This implies that
$\mu(X)=\mu\Big(\lim_{j\rightarrow\infty}\bigcup_{\ell=1}^{j} \chi_\ell \Big)=\lim_{j\rightarrow\infty} \mu\Big(\bigcup_{\ell=1}^{j} \chi_\ell \Big)   \leq
\mu(\chi_0)$ and it contradicts with the fact that $\mu(X)=\infty.$
Applying the same stopping time argument yields that there exists a sequence $\{j_k\}_k$ such that $j_{k}>j_{k-1}$ and
$$
  \mu\Big(\bigcup_{\ell=j_{k}+1}^{j_{k+1}} \chi_\ell \Big) > \mu(B(x_0,2^{j_k}2^{i_0}\sigma))
$$
and
$$
  \mu\Big(\bigcup_{\ell=j_{k}+1}^{j_{k+1}-1} \chi_\ell \Big) \leq \mu(B(x_0,2^{j_k}2^{i_0}\sigma)). \ \ \ \textup{If $j_{k+1}=j_k+1$, then this does not apply}.
$$

Observe that
\begin{eqnarray}\label{molecule e1}
 \mu(B(x_0,2^{j_{k+1}}2^{i_0}\sigma))=\mu(B(x_0,2^{j_k}2^{i_0}\sigma))+ \mu\Big(\bigcup_{\ell=j_{k}+1}^{j_{k+1}} \chi_\ell \Big)  \geq 2\mu(B(x_0,2^{j_k}2^{i_0}\sigma))
\end{eqnarray}
for each integer $k\geq0$. Here we set $j_0=0$.

Applying (\ref{molecule e1}) and induction yields
\begin{eqnarray}\label{molecule e3}
\mu(B(x_0,2^{j_k}2^{i_0} \sigma))\geq  2^k \mu(B(x_0,2^{i_0} \sigma))
\geq  2^k C_\mu^{-1}\mu(B(x_0,2^{i_0+1} \sigma))
\geq 2^k C_\mu^{-1} \sigma^\omega,
\end{eqnarray}
where the second inequality follows from the doubling condition of the measure $\mu$ and the last inequality follows from the definition of the integer $i_0$, see (\ref{molecule i0}).

We point out that for each integer $k\geq1$, if $j_k=j_{k-1}+1$, then we directly obtain that  $\mu( B(x_0, 2^{j_k}2^{i_0}\sigma ) ) \leq C_\mu \mu( B(x_0, 2^{j_{k-1}}2^{i_0}\sigma ) )$ from the doubling property of the measure $\mu$. While if $j_k>j_{k-1}+1$, then
\begin{eqnarray*}
\mu( B(x_0, 2^{j_k}2^{i_0}\sigma ) ) &\leq& C_\mu \mu( B(x_0, 2^{j_k-1}2^{i_0}\sigma ) ) \nonumber\\
&=& C_\mu\mu( B(x_0, 2^{j_k-1}2^{i_0}\sigma )\backslash B(x_0, 2^{j_{k-1}}2^{i_0}\sigma ) )+C_\mu\mu( B(x_0, 2^{j_{k-1}}2^{i_0}\sigma ) ).
\end{eqnarray*}
Note that
\begin{eqnarray*}
\mu( B(x_0, 2^{j_k-1}2^{i_0}\sigma )\backslash B(x_0, 2^{j_{k-1}}2^{i_0}\sigma ) )= \mu\Big(\bigcup_{\ell= j_{k-1}+1}^{j_k-1}\chi_\ell\Big)
\leq \mu( B(x_0, 2^{j_{k-1}}2^{i_0}\sigma ) ),
\end{eqnarray*}
which, together with the above estimate for the case $j_k=j_{k-1}+1$, yields
\begin{eqnarray}
\mu( B(x_0, 2^{j_k}2^{i_0}\sigma ) )
&\leq& 2C_\mu \mu( B(x_0, 2^{j_{k-1}}2^{i_0}\sigma ) )\label{molecule e4}
\end{eqnarray}
for each integer $k\geq1$.

We also point out that, by (\ref{molecule e4}), we obtain
\begin{eqnarray*}
\mu( B(x_0, 2^{j_{k+1}} 2^{i_0}\sigma ) )
&\leq& 2C_\mu \mu( B(x_0, 2^{j_{k}}2^{i_0}\sigma )),
\end{eqnarray*}
which together with the following estimates
\begin{eqnarray*}
\mu( B(x_0, 2^{j_{k}}2^{i_0}\sigma ))
&\leq&  \mu( B(x_0, 2^{j_{k-1}}2^{i_0}\sigma ))+\mu\Big(\bigcup_{\ell= j_{k-1}+1 }^{j_k}\chi_{\ell}\Big)\\
&\leq&  2\mu\Big(\bigcup_{\ell= j_{k-1}+1 }^{j_k}\chi_{\ell}\Big)
\end{eqnarray*}
gives
\begin{eqnarray}\label{molecule e7}
\mu( B(x_0, 2^{j_{k+1}} 2^{i_0}\sigma ) )
&\leq& 4C_\mu\mu\Big(\bigcup_{\ell= j_{k-1}+1 }^{j_k}\chi_{\ell}\Big).
\end{eqnarray}

We now set
$$\widetilde{\chi}_0(x):=\chi_0(x),\hskip1cm
 \widetilde{\chi}_{j_k}(x):=\sum_{\ell=j_{k-1}+1}^{j_{k}} \chi_\ell(x).$$
for integer $k\geq1$, and
\begin{eqnarray}\label{molecule e2}
 m_k(x):= m(x)\widetilde{\chi}_{j_k}(x)- {1\over \int \widetilde{\chi}_{j_k}(z)d\mu(z)} \int_{\widetilde{\chi}_{j_k}}m(y)d\mu(y) \widetilde{\chi}_{j_k}(x)
\end{eqnarray}
for each integer $k\geq0$.

Decompose $m$ by
$$m(x):=\sum_{k=0}^{\infty}m_k(x)+\sum_{k=0}^\infty \overline{m}_k \widetilde{\widetilde{\chi}}_{j_k}(x),  $$
where $\overline{m}_k=\int m(x)\widetilde{\chi}_{j_k}(x)d\mu(x)$ and $\widetilde{\widetilde{\chi}}_{j_k}(x)= {\widetilde{\chi}_{j_k}(x) \over \int \widetilde{\chi}_{j_k}(y)d\mu(y)}$.

To see that $\sum_{k=0}^{\infty}m_k(x)$ gives an atomic decomposition, we will show that $m_k$ are $(p,2)$ atoms due to multiplication of certain constant. Note that $m_k$ is supported in $\widetilde{\chi}_{j_k}=\bigcup_{\ell=j_{k-1}+1}^{j_{k}} \chi_\ell$, and that $\int m_k(x)d\mu(x)=0 $. Therefore, we only need to estimate the $L^2$ norm of $m_k$. First, we have
\begin{eqnarray*}
\|m_0\|_{L^2}&\leq& \Big( \int_{{\chi}_0} |m(x)|^2 d\mu(x) \Big)^{1/2}+\Big( \int_{{\chi}_0} \Big|{1\over \int {\chi}_0(z)d\mu(z)} \int_{\chi_0}m(y)d\mu(y) \widetilde{\chi}_0(x)\Big|^2 d\mu(x) \Big)^{1/2}\\
&\leq& 2\Big( \int_{{\chi}_0} |m(x)|^2 d\mu(x) \Big)^{1/2}\\
&\leq& 2\|m\|_{L^2}\\
&=& 2\sigma^{-{1\over \alpha}}\\
&\leq& 2 \mu( B(x_0,2^{i_0}\sigma) )^{-{1\over \alpha\omega}}\\
&=& 2\mu({\chi}_0)^{{1\over2}-{1\over p}},
\end{eqnarray*}
where in the last inequality we use the fact in (\ref{molecule i0}), namely that $\mu(B(x_0,2^{i_0}\sigma))\leq \sigma^{\omega}.$

Thus, $ 2^{-1}m_0(x) $ is a $(p,2)$ atom.

Similarly, for each $k\geq1$,
\begin{eqnarray*}
\|m_k\|_{L^2}
&\leq& 2\Big( \int_{\widetilde{\chi}_{j_k}} |m(x)|^2 d\mu(x) \Big)^{1/2}\\
&=& 2\Big( \int_{\widetilde{\chi}_{j_k}} |m(x)|^2 V(x,x_0)^{1+{2\eta-2\epsilon\over\omega}} V(x,x_0)^{-(1+{2\eta-2\epsilon\over\omega})}  d\mu(x) \Big)^{1/2}\\
&\leq& 2 V(x_0,2^{j_{k-1}}2^{i_0}\sigma)^{-{1\over2}-{\eta-\epsilon\over\omega}}  \Big( \int_{\widetilde{\chi}_{j_k}} |m(x)|^2  V(x,x_0)^{1+{2\eta-2\epsilon\over\omega}} d\mu(x) \Big)^{1/2}\\
&\leq& 2  V(x_0,2^{j_{k-1}}2^{i_0}\sigma)^{-{1\over2}-{\eta-\epsilon\over\omega}}   \|m\|_{L^2}^{-({\omega+2\eta-2\epsilon\over \omega}{p\over 2-p}-1)}\\
&=&    2  V(x_0,2^{j_{k-1}}2^{i_0}\sigma)^{-{1\over2}-{\eta-\epsilon\over\omega}}  \sigma^{{1\over \alpha} ({\omega+2\eta-2\epsilon\over \omega}{p\over 2-p}-1) }.
\end{eqnarray*}
Applying the estimates given in (\ref{molecule e3}) yields
\begin{eqnarray*}
\sigma^{{1\over \alpha} ({\omega+2\eta-2\epsilon\over \omega}{p\over 2-p}-1)}
\leq(C_\mu2^{-k}V(x_0,2^{j_{k}}2^{i_0}\sigma)^{{1\over \omega\alpha} ({\omega+2\eta-2\epsilon\over \omega}{p\over 2-p}-1)},
\end{eqnarray*}
which, together with the estimates given in (\ref{molecule e4}), namely that $$V(x_0,2^{j_{k-1}}2^{i_0}\sigma)^{-{1\over2}-{\eta-\epsilon\over\omega}}\leq (2C_\mu)^{{1\over2}+{\eta-\epsilon\over\omega}}V(x_0,2^{j_{k}}2^{i_0}\sigma)^{-{1\over2}-{\eta-\epsilon\over\omega}}$$ implies
\begin{eqnarray}
\|m_k\|_{L^2}
&\leq&  2  V(x_0,2^{j_{k-1}}2^{i_0}\sigma)^{-{1\over2}-{\eta-\epsilon\over\omega}}  \sigma^{{1\over \alpha} ({\omega+2\eta-2\epsilon\over \omega}{p\over 2-p}-1) }\nonumber\\
&\leq& 2\cdot 2^{ -{1\over \omega\alpha} ({\omega+2\eta-2\epsilon\over \omega}{p\over 2-p}-1)  k}     (2C_\mu)^{{1\over2}+{\eta-\epsilon\over\omega}} C_\mu^{ {1\over \omega\alpha} ({\omega+2\eta-2\epsilon\over \omega}{p\over 2-p}-1)  }      V(x_0,2^{j_{k}}2^{i_0}\sigma)^{({1\over 2}-{1\over p})}.\label{molecule e5}
\end{eqnarray}
Thus, $2^{-1}\cdot 2^{ {1\over \omega\alpha} ({\omega+2\eta-2\epsilon\over \omega}{p\over 2-p}-1)  k}     (2C_\mu)^{-{1\over2}-{\eta-\epsilon\over\omega}} C_\mu^{ -{1\over \omega\alpha} ({\omega+2\eta-2\epsilon\over \omega}{p\over 2-p}-1)  }  m_k   $ are $(p,2)$ atoms.

Moreover, $\sum_k 2^{ -{1\over \omega\alpha} ({\omega+2\eta-2\epsilon\over \omega}{p\over 2-p}-1)kp}<\infty.$
As a consequence,  $\sum_{k=0}^{\infty}m_k(x)$ gives the desired atomic decomposition and hence, by Theorem \ref{thm 1.1}, belongs to $H^p(X)$ with the norm not larger than the constant $C$, which depends only on $p, \omega, \eta, \epsilon$ and $C_\mu$.

It remains to show that $\sum_{k=0}^\infty \overline{m}_k \widetilde{\widetilde{\chi}}_{j_k}(x)$ also gives an atomic decomposition. To see this, let $N_{k'}=\sum_{k=k'}^\infty \overline{m}_k$. Note that $\sum_{k=0}^\infty \overline{m}_k= \int m(x)d\mu(x)=0.$ Summing up by parts implies that
$$
  \sum_{k=0}^\infty \overline{m}_k\widetilde{\widetilde{\chi}}_k(x)=\sum_{k'=0}^\infty (N_{k'}-N_{k'+1})\widetilde{\widetilde{\chi}}_{k'}(x)=\sum_{k'=0}^\infty N_{k'+1}(\widetilde{\widetilde{\chi}}_{k'+1}(x)-\widetilde{\widetilde{\chi}}_{k'}(x)).
$$
Observe that the support of $\widetilde{\widetilde{\chi}}_{k'+1}(x)-\widetilde{\widetilde{\chi}}_{k'}(x)$ lies within $ B(x_0,2^{k'+1}\sigma) $ and $$\int_X (\widetilde{\widetilde{\chi}}_{k'+1}(x)-\widetilde{\widetilde{\chi}}_{k'}(x)) d\mu(x)=0$$ since $\int\widetilde{\widetilde{\chi}}_{k'}(x)d\mu(x)=1 $ for all $k'$. And we also have
$$
|\widetilde{\widetilde{\chi}}_{k'+1}(x)-\widetilde{\widetilde{\chi}}_{k'}(x) | \leq
{1 \over \int \widetilde{\chi}_{k'+1}(y)d\mu(y)}+{1 \over \int \widetilde{\chi}_{k'}(y)d\mu(y)}\leq {2 \over \int \widetilde{\chi}_{k'}(y)d\mu(y)}={2\over \mu(\widetilde{\chi}_{k'})}.
$$
Now applying (\ref{molecule e7}),  we obtain that
\begin{eqnarray}\label{molecule e6}
|\widetilde{\widetilde{\chi}}_{k'+1}(x)-\widetilde{\widetilde{\chi}}_{k'}(x) | \leq {8C_\mu\over \mu( B(x_0, 2^{j_{k'+1}}2^{i_0}\sigma ) )}.
\end{eqnarray}
Applying the H\"older inequality and the estimates in (\ref{molecule e5}), we obtain that
\begin{eqnarray*}
|N_{k'+1}|&\leq& \sum_{k=k'+1}^\infty \int |m(x)\widetilde{\chi}_{j_{k}}(x)|d\mu(x)\\
&\leq& C \sum_{k=k'+1}^\infty \Big(\int_{\widetilde{\chi}_{j_{k}}} |m(x)|^2d\mu(x)\Big)^{1/2}
\mu(\widetilde{\chi}_{j_{k}})^{1/2}\\
&\leq& C \sum_{k=k'+1}^\infty   2 \cdot 2^{ -{1\over \omega\alpha} ({\omega+2\eta-2\epsilon\over \omega}{p\over 2-p}-1)  k}     (2C_\mu)^{{1\over2}+{\eta-\epsilon\over\omega}} C_\mu^{ {1\over \omega\alpha} ({\omega+2\eta-2\epsilon\over \omega}{p\over 2-p}-1)  }      \\
&&\times V(x_0,2^{j_{k}}2^{i_0}\sigma)^{({1\over 2}-{1\over p})}\mu( B(x_0, 2^{j_k}2^{i_0}\sigma ))^{1/2}\\
&\leq& C 2 \cdot 2^{ -{1\over \omega\alpha} ({\omega+2\eta-2\epsilon\over \omega}{p\over 2-p}-1)  (k'+1)}     (2C_\mu)^{{1\over2}+{\eta-\epsilon\over\omega}} C_\mu^{ {1\over \omega\alpha} ({\omega+2\eta-2\epsilon\over \omega}{p\over 2-p}-1)  }  \\ &&\times \mu( B(x_0, 2^{j_{k'+1}}2^{i_0}\sigma ) )^{1-{1\over p}}.
\end{eqnarray*}
The estimate above and the size estimate of $\widetilde{\widetilde{\chi}}_{k'+1}(x)
-\widetilde{\widetilde{\chi}}_{k'}(x)$ in (\ref{molecule e6}) imply
\begin{eqnarray*}
|N_{k'+1}(\widetilde{\widetilde{\chi}}_{k'+1}(x)
-\widetilde{\widetilde{\chi}}_{k'}(x))|
&\leq& C 2\cdot 2^{ -{1\over \omega\alpha} ({\omega+2\eta-2\epsilon\over \omega}{p\over 2-p}-1)  (k'+1)}     (2C_\mu)^{{1\over2}+{\eta-\epsilon\over\omega}} C_\mu^{ {1\over \omega\alpha} ({\omega+2\eta-2\epsilon\over \omega}{p\over 2-p}-1)  }  \\
&&\times {6C_\mu\over \mu( B(x_0, 2^{j_{k'+1}}\sigma ) )}     \mu( B(x_0, 2^{j_{k'+1}}2^{i_0}\sigma ) )^{1-{1\over p}}\\
&\leq& C  2^{ -{1\over \omega\alpha} ({\omega+2\eta-2\epsilon\over \omega}{p\over 2-p}-1)  (k'+1)} \mu( B(x_0, 2^{j_{k'+1}}2^{i_0}\sigma ) )^{-{1\over p}}.
\end{eqnarray*}
Therefore, we can rewrite $N_{k'+1}(\widetilde{\widetilde{\chi}}_{k'+1}(x)
-\widetilde{\widetilde{\chi}}_{k'}(x))$ as
$$ N_{k'+1}(\widetilde{\widetilde{\chi}}_{k'+1}(x)
-\widetilde{\widetilde{\chi}}_{k'}(x))= \alpha_{k'}\beta_{k'}(x), $$
where $\alpha_{k'}=C 2^{ -{1\over \omega\alpha} ({\omega+2\eta-2\epsilon\over \omega}{p\over 2-p}-1)  (k'+1)}$
and $\beta_{k'}(x)$ are $(p, 2)$ atoms. Hence, by Theorem \ref{thm 1.1},
$ \sum_{k=0}^\infty \overline{m}_k\widetilde{\widetilde{\chi}}_k(x)$ belongs to $H^p(X)$ with the $H^p(X)$ norm does not exceed $C$.
The proof of Theorem \ref{moleculeinHp} is concluded.
\end{proof}

\subsection{The Proof of Theorem 1.3}\label{subsec:proof of theorem 1.3}

\begin{proof}[Proof of Theorem \ref{thm T1Hp}]
Let ${\omega\over \omega+\eta}<p\leq1$ and $\epsilon>0$ such that
${\omega\over \omega+\eta-\epsilon}<p\leq1.$ Suppose that $T$ is a singular integral operator with kernel $K(x,y)$ satisfying the estimate \eqref{size of C-Z-S-I-O} and \eqref{y smooth of C-Z-S-I-O}, and $T$ is bounded on $L^2(X)$. Note that $L^2(X)\cap H^p(X)$ is dense in $H^p(X)$ and if $f\in L^2(X)\cap H^p(X)$ then $f$ has an atomic decomposition $f=\sum_j \lambda_j a_j$ where the series converges in both $L^2(X)$ and $H^p(X).$ Therefore,
to show that $T$ extends to be a bounded operator on $H^{p}(X)$, it suffices to verify that for each $(p,2)$-atom $a$, $ m=T(a) $ is an $(p,2,\epsilon)$-molecule up to a multiplication of a constant $C$.
Suppose that $a$ is an $(p,2)$ atom with the support $B(x_0,r).$ We write
\begin{eqnarray*}
&&\Big( \int_X m(x)^2 d\mu(x) \Big)\Big( \int_X m(x)^2 V(x_0,x)^{1+{2\eta-2\epsilon\over \omega}} d\mu(x)\Big)^{({\omega+2\eta-2\epsilon\over \omega}{p\over 2-p}-1)^{-1}}\\
&&\leq \Big( \int_X m(x)^2 d\mu(x) \Big)\Big( \int_{d(x_0,x)\leq2r} m(x)^2V(x_0,x)^{1+{2\eta-2\epsilon\over \omega}} d\mu(x)\Big)^{({\omega+2\eta-2\epsilon\over \omega}{p\over 2-p}-1)^{-1}}\\
&& \hskip1cm+ \Big( \int_X m(x)^2 d\mu(x) \Big)\Big( \int_{d(x_0,x)>2r} m(x)^2 V(x_0,x)^{1+{2\eta-2\epsilon\over \omega}} d\mu(x)\Big)^{({\omega+2\eta-2\epsilon\over \omega}{p\over 2-p}-1)^{-1}}\\
&&:=I+II.
\end{eqnarray*}
Note that, by the $L^2$ boundedness of $T$ and the size condition on $a, \|m\|_{L^2}^2\leq V(x_0,r)^{(1-{2\over p})}.$ As for $I$, applying the doubling property on $\mu$ implies that
\begin{eqnarray*}
I&\leq& CV(x_0,r)^{(1-{2\over p})}V(x_0,2r)^{(1+{2\eta-2\epsilon\over \omega})({\omega+2\eta-2\epsilon\over \omega}{p\over 2-p}-1)^{-1}}
 \Big( \int m(x)^2  d\mu(x)\Big)^{({\omega+2\eta-2\epsilon\over \omega}{p\over 2-p}-1)^{-1}}\\
&\leq&  CV(x_0,r)^{(1-{p\over2})}V(x_0,r)^{(1+{2\eta-2\epsilon\over \omega})({\omega+2\eta-2\epsilon\over \omega}{p\over 2-p}-1)^{-1}}
 V(x_0,r)^{(1-{2\over p})({\omega+2\eta-2\epsilon\over \omega}{p\over 2-p}-1)^{-1}}\\
&\leq& C.
\end{eqnarray*}
To estimate $II,$ observe that if $d(x_0,x)>2r$, by the cancellation condition on $a,$ we have
\begin{eqnarray*}
|m(x)|&=& \int[K(x,y)-K(x,x_0)]a(y)d\mu(y)\leq C\int {1\over V(x_0,x)} \Big( {d(x_0,y)\over d(x_0,x)} \Big)^\eta |a(y)|d\mu(y)\\
&\leq& C {1\over V(x_0,x)} \Big( {r\over d(x_0,x)} \Big)^\eta \mu(B(x_0,r))^{1-{1\over p}} .
\end{eqnarray*}
This tgether with the doubling property on $\mu$ gives
\begin{eqnarray*}
II&\leq&
 C\mu(B(x_0,r))^{(1-{2\over p})}\Big( \int_{d(x_0,x)>2r} m(x)^2 V(x_0,x)^{1+{2\eta-2\epsilon\over \omega}}  d\mu(x)\Big)^{({\omega+2\eta-2\epsilon\over \omega}{p\over 2-p}-1)^{-1}}\\
&\leq& C\mu(B(x_0,r))^{(1-{2\over p})}\\
&&\times\Big( \int_{d(x_0,x)>2r}  {1\over V(x_0,x)^2}  \Big( {r\over d(x_0,x)} \Big)^{2\eta} V(x,x_0)^{1+{2\eta-2\epsilon\over \omega}} \mu(B(x_0,r))^{2-{2\over p}} d\mu(x)\Big)^{({\omega+2\eta-2\epsilon\over \omega}{p\over 2-p}-1)^{-1}}\\
&\leq& C \mu(B(x_0,r))^{(1-{2\over p})}\\
&&\times\Big( r^{2\eta} \int_{d(x_0,x)>2r}  {V(x_0,x)^{{2\eta-2\epsilon\over \omega}}\over V(x_0,x)}  \Big( {1\over d(x,x_0)} \Big)^{2\eta} \mu(B(x_0,r))^{2-{2\over p}}  d\mu(x)\Big)^{({\omega+2\eta-2\epsilon\over \omega}{p\over 2-p}-1)^{-1}}\\
&\leq& C \mu(B(x_0,r))^{(1-{2\over p})}\Big( r^{2\eta} \sum_{j\geq 0}\int_{2^{j+1}r\leq d(x,x_0)<2^{j+2}r} {V(x_0, 2^{j+2}r)^{{2\eta-2\epsilon\over \omega}}\over V(x_0,2^{j+1}r)}\\
&&\times  \Big( {1\over 2^{j+1}r} \Big)^{2\eta} \mu(B(x_0,r))^{2-{2\over p}}  d\mu(x)\Big)^{({\omega+2\eta-2\epsilon\over \omega}{p\over 2-p}-1)^{-1}}\\
&\leq& C \mu(B(x_0,r))^{(1-{2\over p})}\Big( \mu(B(x_0,r))^{\frac{2\eta-2\epsilon}{\omega}+2-{2\over p}}  \Big)^{({\omega+2\eta-2\epsilon\over \omega}{p\over 2-p}-1)^{-1}}\\
&\leq& C.
\end{eqnarray*}
Finally, by the fact that $T^*(1)=0$, we obtain that $ \int m(x) d\mu(x)=\int T(a)(x)d\mu(x)=0$ and hence $m$ is the multiple of an $(p,2,\epsilon)$ molecule. The proof of the sufficient implication of Theorem \ref{thm T1Hp} then follows from Theorem \ref{moleculeinHp}.

We now prove that if $T$ is bounded on $L^2(X)$ and on $H^p(X)$ then $\int T(f)(x)d\mu(x)=0$ for $f\in L^2(X)\cap H^p(X).$ As mentioned in section 1, this follows from the following general result.
\begin{prop} \label{HpinLp}
If $f\in L^2(X)\cap H^p(X), \frac{\omega}{\omega+\eta}<p\leq 1,$ then there exists a constant $C$ independent of the $L^2(X)$ norm of $f$ such that
\begin{eqnarray}
\|f\|_p\leq C\|f\|_{H^p}.
\end{eqnarray}
\end{prop}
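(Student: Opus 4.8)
The plan is to derive the estimate directly from the atomic decomposition for the subspace $L^2(X)\cap H^p(X)$ furnished by Theorem \ref{thm 1.1}, using the elementary observation that every $(p,2)$-atom has $L^p$ quasi-norm at most $1$. First I would fix $f\in L^2(X)\cap H^p(X)$ and invoke Theorem \ref{thm 1.1} to write $f=\sum_j\lambda_j a_j$, where the $a_j$ are $(p,2)$-atoms, the series converges in $L^2(X)$ (and in $H^p(X)$), and $\sum_j|\lambda_j|^p\le C\|f\|_{H^p}^p$ with $C$ depending only on $(X,d,\mu)$ and $p$; in particular $C$ does not depend on $\|f\|_{L^2(X)}$. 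This is the only place where the hard analysis of Section \ref{sec:proof of theorem 1.7} is used.

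Next, for any $(p,2)$-atom $a$ supported in a ball $B$, Hölder's inequality with exponents $2/p$ and $2/(2-p)$ together with the normalization $\|a\|_{L^2}\le\mu(B)^{\frac12-\frac1p}$ gives
\[
\int_X|a(x)|^p\,d\mu(x)\le\Big(\int_B|a(x)|^2\,d\mu(x)\Big)^{p/2}\mu(B)^{1-p/2}\le\mu(B)^{(\frac12-\frac1p)p}\,\mu(B)^{1-\frac p2}=1,
\]
so $\|a\|_{L^p(X)}\le1$ for every $(p,2)$-atom. I would then control $f$ by the partial sums $S_N:=\sum_{j\le N}\lambda_j a_j$: since $0<p\le1$, the inequality $|t+s|^p\le|t|^p+|s|^p$ gives, pointwise, $|S_N|^p\le\sum_{j\le N}|\lambda_j|^p|a_j|^p$, whence $\int_X|S_N|^p\,d\mu\le\sum_{j\le N}|\lambda_j|^p\le C\|f\|_{H^p}^p$ uniformly in $N$. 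Because $S_N\to f$ in $L^2(X)$, a subsequence $S_{N_k}$ converges to $f$ $\mu$-almost everywhere, and Fatou's lemma yields
\[
\int_X|f(x)|^p\,d\mu(x)\le\liminf_{k\to\infty}\int_X|S_{N_k}(x)|^p\,d\mu(x)\le C\|f\|_{H^p}^p,
\]
which is the desired estimate. (Equivalently, $\sum_j|\lambda_j|^p\|a_j\|_{L^p}^p<\infty$ makes $(S_N)$ Cauchy for the $p$-metric of $L^p(X)$, so the series converges in $L^p(X)$, necessarily to $f$, and one passes to the limit in $\|S_N\|_{L^p(X)}^p\le\sum_{j\le N}|\lambda_j|^p$.)

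I do not expect a serious obstacle here: the content of the proposition is essentially that the constant must be oblivious to $\|f\|_{L^2}$, and this is exactly what the $L^2$-convergent atomic decomposition of Theorem \ref{thm 1.1} delivers, since there $\sum_j|\lambda_j|^p\le C\|f\|_{H^p}^p$ with $C$ structural while $\|a_j\|_{L^p(X)}\le1$ regardless of the atom. The one point requiring care is the passage from convergence of $\sum_j\lambda_j a_j$ in $L^2(X)$ to the pointwise/$L^p$ bound for $f$ itself, handled by the subsequence-plus-Fatou step above; a more direct argument via the square function or interpolation would instead produce a constant depending on $\|f\|_{L^2(X)}$, which is precisely what this route avoids.
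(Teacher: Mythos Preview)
Your proof is correct and follows essentially the same strategy as the paper: both exploit the $L^2$-convergent atomic decomposition established in Section~\ref{sec:proof of theorem 1.7} to bound $\|f\|_{L^p}$ by $C\|f\|_{H^p}$ with a structural constant. The paper inlines the stopping-time construction again (rebuilding $\Omega_l$, $B_l$, $\widetilde{\Omega}_l$ from Proposition~\ref{ new Caderon-type reproducing formula} and applying H\"older on each $\widetilde{\Omega}_l$), whereas you invoke Theorem~\ref{thm 1.1} as a black box, observe $\|a\|_{L^p}\leq 1$ for every $(p,2)$-atom, and pass to the limit via Fatou; your version is more economical but the underlying content is identical.
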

As a consequence of this proposition, if $f\in L^2(X)\cap H^p(X),$ then $f\in L^1(X)$ and $\int f(x) d\mu(x)=0.$ Indeed, if $f\in L^2(X)\cap H^p(X),$ by Proposition \ref{HpinLp}, then $f\in L^p(X)\cap L^2(X).$ Hence, by interpolation, $f\in L^1(X).$ To see the integral of $f$ is zero, applying the wavelet expansion as in Theorem \ref{thm 1.1}, $f(x)= \sum_{k\in\mathbb{Z}}\sum_{\alpha \in \mathscr{Y}^k}\langle f,\psi_{\alpha}^k \rangle \psi_{\alpha}^k(x)$ where the series converges in both $L^2(X)$ and $H^p(X).$ Let $E_n(k,\alpha)$ be a finite set of $k\in\mathbb{Z}$ and $\alpha \in \mathscr{Y}^k$ and $E_n(k,\alpha)$ tends to the whole set $\lbrace (k,\alpha): k\in\mathbb{Z}, \alpha \in \mathscr{Y}^k\rbrace.$ Therefore, $\sum_{E_n^c(k,\alpha)} \langle f,\psi_{\alpha}^k \rangle \psi_{\alpha}^k(x)$ converges to zero as $n$ tends to infinity in both $L^2(X)$ and $H^p(X).$ We obtain that
\begin{eqnarray*}
|\int f(x)d\mu(x)|&\leq& |\int\sum_{E_n(k,\alpha)} \langle f,\psi_{\alpha}^k \rangle \psi_{\alpha}^k(x)d\mu(x)|+
|\int\sum_{E_n^c(k,\alpha)} \langle f,\psi_{\alpha}^k \rangle \psi_{\alpha}^k(x)d\mu(x)|\\
&\leq&|\int \sum_{E_n^c(k,\alpha)} \langle f,\psi_{\alpha}^k \rangle \psi_{\alpha}^k(x)d\mu(x)|\\
&\leq& C\|\sum_{E_n^c(k,\alpha)} \langle f,\psi_{\alpha}^k \rangle \psi_{\alpha}^k(x)\|_{H^p}
+C\|\sum_{E_n^c(k,\alpha)} \langle f,\psi_{\alpha}^k \rangle \psi_{\alpha}^k(x)\|_2,
\end{eqnarray*}
where the first inequality follows from the fact that $\int\sum_{E_n(k,\alpha)} \langle f,\psi_{\alpha}^k \rangle \psi_{\alpha}^k(x)d\mu(x)=0$ since the cancellation property on the wavelet $\psi_{\alpha}^k(x).$

Letting $n$ tend to infinity gives the desired result.

Now assuming Proposition \ref{HpinLp} for the moment, if $T$ is bounded on both $L^2(X)$ and $H^p(X)$ and $f\in L^2(X)\cap H^p(X),$ then $Tf \in L^2(X)\cap H^p(X)$ and hence $\int Tf(x) d\mu(x)=0.$ The necessary implication of Theorem \ref{thm T1Hp} is concluded.

It remains to show Proposition \ref{HpinLp}. The key idea of the proof is to apply the method of atomic decomposition for subspace $L^2(X)\cap H^p(X).$ More precisely, if $f\in L^2(X)\cap H^p(X),$ as in the proof of Theorem \ref{thm 1.1}, we set
\[\Omega_l=\left\{ x\in X: {\widetilde{S}}(f)(x)>2^l\right\},\]
\[
B_l=\left\{ Q_\alpha^k:\mu(Q_\alpha^k\cap\Omega_l)>{1\over2}\mu(Q_\alpha^k)
\text{ and }
\mu(Q_\alpha^k \cap\Omega_{l+1})\leq {1\over2}\mu(Q_\alpha^k)\right\}
\]
and
\[\widetilde{\Omega}_l=\left\{ x\in {X}:  M(\chi_{\Omega_l})(x)>1/2\right\},\]
where $M$ is the Hardy--Littlewood maximal function on $X$ and hence, $\mu(\widetilde{\Omega}_l)\leq C\mu({\Omega}_l).$

Applying Proposition \ref{ new Caderon-type reproducing formula}, we write
\[
f(x)=\sum_{l}\sum_{ Q_\alpha^{k+N}\in B_l}  \mu(Q_\alpha^{k+N})D_k(x,x_\alpha^{k+N}) {\widetilde D}_k(g)(x_\alpha^{k+N}),
\]
where the series converges in both $L^2(X)$ and $H^p(X).$ Thus, for $\frac{{\omega}{\omega+\eta}}<p\leq 1,$
\[
\|f(x)\|_p^p\leq\sum_{l}\|\sum_{ Q_\alpha^{k+N}\in B_l}  \mu(Q_\alpha^{k+N})D_k(x,x_\alpha^{k+N}) {\widetilde D}_k(g)(x_\alpha^{k+N})\|_p^p.
\]
Note that if $Q_\alpha^{k+N}\in B_l$ then $Q_\alpha^{k+N}\subset \widetilde{\Omega}_l.$ Therefore, $\sum\limits_{ Q_\alpha^{k+N}\in B_l}  \mu(Q_\alpha^{k+N})D_k(x,x_\alpha^{k+N}) {\widetilde D}_k(g)(x_\alpha^{k+N})$ is supported
in $\widetilde{\Omega}_l.$ Applying H\"older inequality implies that
\[
\|f(x)\|_p^p\leq\sum_{l} \mu(\widetilde{\Omega}_l)^{1-\frac{p}{2}}\|\sum_{ Q_\alpha^{k+N}\in B_l}  \mu(Q_\alpha^{k+N})D_k(x,x_\alpha^{k+N}) {\widetilde D}_k(g)(x_\alpha^{k+N})\|_2^p.
\]
As in the proof of Theorem \ref{thm 1.1}, we have
\[
 \|\sum_{ Q_\alpha^{k+N}\in B_l}  \mu(Q_\alpha^{k+N})D_k(x,x_\alpha^{k+N}) {\widetilde D}_k(g)(x_\alpha^{k+N})\|_2\leq C2^l\mu(\widetilde{\Omega}_l)^{\frac{1}{2}},
\]
which gives
\[
\|f(x)\|_p^p\leq C\sum_{l} 2^{lp}\mu({\Omega}_l)^{p}\leq C\|{\widetilde S}(f)\|^p_p\leq C\|f\|^p_{H^p}
\]
since $\mu(\widetilde{\Omega}_l)\leq C\mu(\Omega_l).$

The proof of Theorem \ref{thm T1Hp} is concluded.
\end{proof}

\section{Criterion of the boundedness for singular integrals on Carleson measure and Campanato spaces}\label{sec:proof of theorem 1.12}

As mentioned in Section 1, we first prove the following the weak density argument.
\begin{lemma}\label{weaktopology}
 Suppose that $\frac{\omega}{\omega+\eta}<p\leq 1.$ Then $L^2(X) \cap \cmo^p(X)$ is dense in $\cmo^p(X)$ in the sense of the weak topology $(H^p(X),\cmo^p(X))$. Moreover precisely, for each $f \in \cmo^p(X)$ there exists a sequence $\left \{ f_n \right \} \in L^2(X) \cap \cmo^p(X)$ such that $\left \| f_n \right \|_{\cmo^p} \leq \left \| f \right \|_{\cmo^p}$ and moreover,
 \[
 \lim_{n \rightarrow \infty} \left \langle f_n,g \right \rangle = \left \langle f,g \right \rangle
 \]
 for all $g \in  H^p(X)$.
\end{lemma}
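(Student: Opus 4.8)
The plan is to obtain the approximating sequence by truncating the wavelet expansion of $f$. Fix an increasing sequence of finite index sets $E_n\subset\Lambda:=\{(k,\alpha):k\in\Z,\ \alpha\in\mathscr{Y}^k\}$ with $\bigcup_n E_n=\Lambda$, and set
\[
 f_n:=\sum_{(k,\alpha)\in E_n}\langle\psi_\alpha^k,f\rangle\,\psi_\alpha^k.
\]
The numbers $\langle\psi_\alpha^k,f\rangle$ are well defined because $\psi_\alpha^k/\sqrt{\mu(B(x_\alpha^k,\delta^k))}$ is a test function in $\GG(\beta,\gamma)$ and $f\in(\GG(\beta,\gamma))'$, and since $E_n$ is finite and each wavelet lies in $L^2(X)$ we immediately have $f_n\in L^2(X)$. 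Thus the first, routine, point is just that $f_n$ is a legitimate candidate for the approximating sequence.

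The second step is the $\cmo^p$-bound, which comes out exactly rather than up to a constant. By orthonormality of $\{\psi_\alpha^k\}$ the wavelet coefficients of $f_n$ are $\langle\psi_\beta^j,f_n\rangle=\langle\psi_\beta^j,f\rangle$ when $(j,\beta)\in E_n$ and $0$ otherwise; hence for every Auscher--Hyt\"onen dyadic ball $Q$,
\[
 \frac{1}{\mu(Q)^{\frac{2}{p}-1}}\sum_{\substack{k\in\Z,\ \alpha\in\mathscr{Y}^k\\ Q_\alpha^k\subset Q}}\big|\langle\psi_\alpha^k,f_n\rangle\big|^2\ \le\ \frac{1}{\mu(Q)^{\frac{2}{p}-1}}\sum_{\substack{k\in\Z,\ \alpha\in\mathscr{Y}^k\\ Q_\alpha^k\subset Q}}\big|\langle\psi_\alpha^k,f\rangle\big|^2 ,
\]
and taking the supremum over $Q$ gives $\mathcal{C}_p(f_n)\le\mathcal{C}_p(f)$, i.e.\ $f_n\in L^2(X)\cap\cmo^p(X)$ with $\|f_n\|_{\cmo^p}\le\|f\|_{\cmo^p}$.

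The third and main step is the weak convergence $\langle f_n,g\rangle\to\langle f,g\rangle$ for all $g\in H^p(X)$, the pairings being those of the duality in Theorem C. I would first treat $g$ in the linear span $\mathcal{D}$ of $\{\psi_\alpha^k\}$: such a $g$ lies in $\GG(\beta,\gamma)$, and writing $g=\sum_{(j,\beta)\in G}c_{j,\beta}\psi_\beta^j$ with $G$ finite, orthonormality together with the agreement of the $L^2$, distributional, and $H^p$--$\cmo^p$ pairings on test functions gives $\langle f_n,g\rangle=\sum_{(j,\beta)\in G}c_{j,\beta}\langle\psi_\beta^j,f\rangle=\langle f,g\rangle$ as soon as $G\subset E_n$; so the convergence is immediate on $\mathcal{D}$. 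Next I would check that $\mathcal{D}$ is dense in $H^p(X)$: since $L^2(X)\cap H^p(X)$ is dense in $H^p(X)$ and, for $g$ in this intersection, the wavelet series $\sum_{(k,\alpha)}\langle g,\psi_\alpha^k\rangle\psi_\alpha^k$ converges to $g$ in $H^p(X)$ — the square function of a tail being pointwise dominated by $S(g)\in L^p(X)$ and tending to $0$ a.e., so that dominated convergence applies in $L^p$ — the partial sums lie in $\mathcal{D}$ and approximate $g$. Finally, for arbitrary $g\in H^p(X)$ and $\e>0$, pick $g'\in\mathcal{D}$ with $\|g-g'\|_{H^p}<\e$ and decompose
\[
 \langle f_n,g\rangle-\langle f,g\rangle=\langle f_n,g-g'\rangle+\big(\langle f_n,g'\rangle-\langle f,g'\rangle\big)+\langle f,g'-g\rangle ;
\]
the middle term is $0$ for $n$ large, and by Theorem C and the uniform bound $\|f_n\|_{\cmo^p}\le\|f\|_{\cmo^p}$ the outer two terms are each $\le C\|f\|_{\cmo^p}\e$, whence $\limsup_n|\langle f_n,g\rangle-\langle f,g\rangle|\le 2C\|f\|_{\cmo^p}\e$, and $\e\to0$ concludes.

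The step I expect to be the genuine obstacle is the pairing bookkeeping in the third step: one must be sure that for a finite wavelet combination $g$ the duality pairing $\langle f_n,g\rangle$ furnished by Theorem C really coincides with the naive coefficient sum (and similarly for $\langle f,g\rangle$), which requires tracking the consistency of the $L^2$ inner product, the $(\GG(\beta,\gamma))$--$(\GG(\beta,\gamma))'$ pairing, and the $H^p$--$\cmo^p$ duality, together with a careful proof that $\mathcal{D}$ is dense in $H^p(X)$ using the square-function characterization of $\|\cdot\|_{H^p}$. The remaining estimates are soft.
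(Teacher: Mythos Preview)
Your proposal is correct and follows essentially the same approach as the paper: truncate the wavelet expansion of $f$, use orthonormality to get the sharp bound $\|f_n\|_{\cmo^p}\le\|f\|_{\cmo^p}$, verify weak convergence on a convenient dense subspace, and then extend to all of $H^p(X)$ via the uniform bound and Theorem~C. The only cosmetic differences are that the paper uses the specific truncation $\{|k|\le n,\ Q_\alpha^k\subset B_n\}$ and takes $\GG(\beta,\gamma)$ as its dense subspace (showing the tail of the wavelet expansion of a test function tends to $0$ in $H^p$), whereas you use arbitrary finite index sets $E_n$ and take the finite span $\mathcal D$ of the wavelets, which makes the pairing computation trivial but shifts the work to checking $\mathcal D$ is $H^p$-dense; both routes amount to the same thing.
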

\begin{proof} Let $f$ be in $\cmo^p(X).$ By wavelet expansion, $f(x) = \sum_{k,\alpha} \left \langle \psi_\alpha^k ,f \right \rangle\psi_\alpha^k(x),$ we set
\begin{eqnarray*}
f_n(x):=\sum_{|k|\leq n}\  \sum_{Q_\alpha^k\subset B_n}\left \langle \psi_\alpha^k ,f \right \rangle
\psi_\alpha^k(x),
\end{eqnarray*}
where $B_n=\lbrace x: d(x_0, x)\leq n\rbrace$ and $x_0\in X$ is fixed.
It is easy to see that $f_n\in L^2(X).$ To see that $f_n\in \cmo^p(X),$ for any quasi dyadic cube $P\subset X$ in the sense of Auscher and Hyt\"onen,
\begin{eqnarray*}
&&\sup_{P}
 {1\over\mu(P)^{{2\over p} - 1}}
            \sum_{k'\in\mathbb{Z}, \alpha'\in\mathscr{Y}^{k'},
                 Q_{\alpha'}^{k'} \subset P}
            \big| \langle \psi_{\alpha'}^{k'}, f_n \rangle
            \big|^2\\
&=&\sup_{P}
 {1\over\mu(P)^{{2\over p} - 1}}
            \sum_{|k|\leq  n, Q_{\alpha'}^{k'} \subset P\cap B_n}
            \big| \langle \psi_{\alpha'}^{k'}, f \rangle\big|\leq \|f\|^2_{\cmo^p},
\end{eqnarray*}
which implies that $\|f_n\|_{\cmo^p}\leq\|f\|_{\cmo^p}.$

We now verify that $f_n$ converges to $f$ in the weak topology $(H^p(X), \cmo^p(X))$.
To do this, for any $h\in \GGp(\beta,\gamma)$, by the wavelet expansion,
$$
\langle f-f_n, h\rangle =  \langle f, \sum \limits_{|k|> n, or Q_\alpha^k \nsubseteq B_n}\psi_{\alpha}^k
            \langle \psi_{\alpha}^k, h \rangle .
$$
Note that $h$ is in both $H^p(X)$ and $\cmo^p(X)$ and moreover, $\sum \limits_{|k|> n, or Q_\alpha^k \nsubseteq B_n}\psi_{\alpha}^k(x)\langle \psi_{\alpha}^k, h \rangle$ tends to zero as $n$ tends to infinity in the $H^p(X)$ norm. Therefore, by the duality argument in Theorem C, $\langle f-f_n, h\rangle $ tends to 0 as $n$ tends to infinity. Further note that $ \GGp(\beta,\gamma)$ is dense in $H^p(X).$ Thus, for each $g\in H^p(X)$ and for any $\varepsilon>0$, there exists a function $h\in  \GGp(\beta,\gamma)$ such that $\|g-h\|_{H^p(X)}<\varepsilon$. Now by the duality and the fact that $\|f_n\|_{\cmo^p(X)}\leq \|f\|_{\cmo^p(X)}$, we have
\begin{eqnarray*}
\big|\langle f-f_n, g\rangle\big| &\leq & \big|\langle f-f_n, g-h\rangle\big|+ \big|\langle f-f_n, h\rangle\big|  \\[5pt]
&\leq&  \|f-f_n\|_{\cmo^p}\|g-h\|_{H^p}+ \big|\langle f-f_n, h\rangle\big|  \\[5pt]
&\leq&  2\varepsilon\|f\|_{\cmo^p}+ \big|\langle f-f_n, h\rangle\big|,
\end{eqnarray*}
which together with the fact that $\big|\langle f-f_n, h\rangle\big|$ tends to zero as $n$ tends to infinity implies that $\lim\limits_{n\rightarrow\infty}\langle f-f_n, g\rangle=0$. The proof of Lemma \ref{weaktopology} is completed.
\end{proof}

We now return to the proof of Theorem \ref{thm T1CMOP}. Suppose that $T$ is a singular integral with the kernel $K(x,y)$ satisfying the estimates \eqref{size of C-Z-S-I-O} and \eqref{x smooth of C-Z-S-I-O}, and $T$ is bounded on $L^2(X).$
To show the boundedness of $T$ on $\cmo^p(X),$ we first define the action of $T$ on $\cmo^p(X).$ To do this, given $f \in \cmo^p(X),$ by Lemma \ref{weaktopology}, there exists a sequence $\lbrace f_n\rbrace$ converges to $f$ in the weak topology $(H^p(X),\cmo^p(X)).$ We observe that for all $g \in H^p(X),$ $<Tf_n, g>$ has the limit as $n$ tends to infinity.
This is because that $T^*$ satisfies all conditions of Theorem \ref{thm T1Hp} and hence $T^*$ is bounded on $H^p(X).$ Thus, if $g$ is in the subspace $L^2(X)\cap H^p(X),$ then $T^*(g)\in L^2(X)\cap H^p(X),$ by the duality in Theorem C,
$$
\left | \left \langle Tf_n,g \right \rangle - \left \langle Tf_m,g \right \rangle \right |
= \left | \left \langle (f_n-f_m), T^*g \right \rangle \right |, $$
which tends to zero as $n,m$ tend to infinity and hence $<Tf_n, g>$ has the limit as $n$ tends to infinity.

Applying the density argument gives the observation. Note that the limit of $<Tf_n, g>$ as $n$ tends to infinity does not depend on the sequence $\lbrace f_n\rbrace$ satisfying Lemma \ref{weaktopology} and thus, we can define
\[
\left \langle Tf,g \right \rangle = \lim_{n \rightarrow \infty} \left \langle Tf_n,g \right \rangle
\]
for all $g \in H^p(X)$.

We claim that there exists a constant $C$ such that
\[
\|Tf\|_{\cmo^p}\leq C\|f\|_{\cmo^p}
\]
for all $f \in L^2(X)\cap \cmo^p(X)$.

Assuming the claim for the moment, by the definition of $Tf$ for $f\in \cmo^p(X),$ we have $\langle \psi_\alpha^k, Tf\rangle =\lim_{n \rightarrow \infty }\left \langle \psi_\alpha^k,Tf_n \right \rangle $ where $f_n$ is the sequence given by Lemma \ref{weaktopology}. This together with the claim implies that

$$
\left \|  Tf\right \|_{\cmo^p} \leq \lim_{n \rightarrow \infty} \inf \left \| Tf_n \right \|_{\cmo^p}$$
$$\leq C\lim_{n \rightarrow \infty} \inf \left \| f_n \right \|_{\cmo^p} \leq C \left \| f \right \|_{\cmo^p},$$
which completes the proof of the sufficiency of Theorem \ref{thm T1CMOP}.

We return to show the claim. The basic idea is to apply the duality in Theorem C. To be more precise, by Theorem C, for $f \in L^2(X) \cap \cmo^p(X)$ and $g \in L^2(X)\cap H^p(X)$,
\[\left | \left \langle Tf,g \right \rangle \right | = \left | \left \langle f,T^*g \right \rangle \right | \leq C \left \| f \right \|_{\cmo^p} \left \| T^*g \right \|_{H^p} \leq C \left \| f \right \|_{\cmo^p} \left \|  g\right \|_{H^p}.\]

This implies that for each $f \in L^2(X)\cap \cmo^p(X)$,  $\mathcal{L}_{f}(g) = \left \langle Tf,g \right \rangle$   defines a continuous linear functional on $L^2(X)\cap H^p(X)$.  Note that $L^2(X)\cap H^p(X)$ is dense in $H^p(X).$ Thus, $\mathcal{L}_f(g)=\langle T(f), g\rangle$ extends to a linear functional on $H^p(X)$ and the norm of this linear functional is dominated by $C\|f\|_{\cmo^p(X)}.$ By Theorem C, there exists $h \in \cmo^p(X)$ with $\|h\|_{\cmo^p}\leq C\|f\|_{\cmo^p}$ such that
$$\left \langle Tf,g \right \rangle = \mathcal{L}_{f}(g) = \left \langle h,g \right \rangle$$
for $g \in L^2(X) \cap H^p(X)$.

Particularly, note that all $\psi_\alpha^k(x) \in L^2(X) \cap H^p(X).$ Therefore,
\[\left \langle Tf,\psi_\alpha^k \right \rangle = \left \langle h,\psi_\alpha^k \right \rangle,\]
which, by the definition of space $\cmo^p(X),$ we obtain that
$$\left \| Tf \right \|_{\cmo^p} = \left \| h \right \|_{\cmo^p} \leq C \left \| f \right \|_{\cmo^p}.$$
The proof of the claim is concluded.

The necessary condition in Theorem \ref{thm T1CMOP} follows directly from the boundedness of $T$ on $\cmo^p(X)$ and the fact that the function 1 has the norm zero in $\cmo^p(X).$  The proof of Theorem \ref{thm T1CMOP} is concluded.

As a consequence of Theorem \ref{thm T1CMOP}, we have the following
\begin{corollary}
If $T$ satisfies the same condition as in Theorem \ref{thm T1CMOP}, then $T$ is bounded on Campanato space $\mathcal C_{\frac{1}{p}-1}(X)$ if and only if $T(1)=0.$
\end{corollary}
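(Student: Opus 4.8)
Here is the plan. The statement is an immediate corollary of Theorem~\ref{thm T1CMOP} once one invokes the identification $\cmo^p(X)=\mathcal C_{\frac1p-1}(X)$ for $\frac{\omega}{\omega+\eta}<p\le1$, with equivalent (semi)norms, which is Proposition~\ref{cmopC}. Since the two spaces coincide as sets and carry comparable (semi)norms, an operator is bounded on $\cmo^p(X)$ if and only if it is bounded on $\mathcal C_{\frac1p-1}(X)$, with comparable operator norms, and the extension of $T$ furnished by the weak-density construction in the proof of Theorem~\ref{thm T1CMOP} is literally the same map in both descriptions.

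First I would record that, under the stated hypotheses on the kernel $K$ (the estimates \eqref{size of C-Z-S-I-O} and \eqref{x smooth of C-Z-S-I-O}) together with the $L^2(X)$-boundedness of $T$, Theorem~\ref{thm T1CMOP} already delivers: $T$ extends to a bounded operator on $\cmo^p(X)$ if and only if $T(1)=0$, where $T(1)=0$ is understood in the dual sense $\langle 1,T^*f\rangle=\int_X T^*f(x)\,d\mu(x)=0$ for every $f\in L^2(X)\cap H^p(X)$. Next I would point out that this defining condition for $T(1)=0$ is phrased purely via the pairing with the predual class $L^2(X)\cap H^p(X)$, and makes no reference whatsoever to $\cmo^p(X)$ or to $\mathcal C_{\frac1p-1}(X)$; hence it is unambiguous and unchanged when the criterion is reformulated in the Campanato language.

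The corollary then follows. If $T(1)=0$, Theorem~\ref{thm T1CMOP} gives $\|Tf\|_{\cmo^p}\le C\|f\|_{\cmo^p}$, and Proposition~\ref{cmopC} replaces each side by the corresponding Campanato (semi)norm, so $\|Tf\|_{\mathcal C_{\frac1p-1}}\le C'\|f\|_{\mathcal C_{\frac1p-1}}$; conversely, boundedness on $\mathcal C_{\frac1p-1}(X)$ transfers back to boundedness on $\cmo^p(X)$ via the same norm equivalence, and Theorem~\ref{thm T1CMOP} then forces $T(1)=0$. There is no genuine obstacle in this argument; the only points that warrant a line of care are that the norm equivalence in Proposition~\ref{cmopC} is two-sided on the whole space (so that both implications go through, with a constant depending only on $p,\omega,\eta$ and the doubling and kernel constants), and the routine observation that constant functions have zero (semi)norm in both spaces, so that $T(1)=0$ is the natural normalization in either formulation.
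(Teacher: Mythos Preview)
Your proposal is correct and follows precisely the paper's own argument: the corollary is obtained directly from Theorem~\ref{thm T1CMOP} combined with Proposition~\ref{cmopC}, which identifies $\cmo^p(X)$ with $\mathcal C_{\frac1p-1}(X)$ with equivalent norms. Your additional remarks about the two-sidedness of the norm equivalence and the predual formulation of $T(1)=0$ are accurate elaborations, but the underlying route is identical to the paper's.
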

 The proof of this corollary follows directly from Theorem \ref{thm T1CMOP} and the following
\begin{prop} \label{cmopC}$\cmo^p(X)=\mathcal C_{\frac{1}{p}-1}(X), \frac{\omega}{\omega+\eta}<p\leq 1$ with equivalent norms.
\end{prop}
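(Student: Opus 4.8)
The plan is to obtain Proposition~\ref{cmopC} as a consequence of the two duality theorems together with Theorem~\ref{thm 1.1}, so that no new hard analysis is needed. One inclusion is already in hand: Lemma~\ref{lipcmop6} gives $\mathcal C_{\frac1p-1}(X)\subset\cmo^p(X)$ with $\|f\|_{\cmo^p}\le C\|f\|_{\mathcal C_{\frac1p-1}}$. Hence the remaining task is to prove the reverse inclusion $\cmo^p(X)\subset\mathcal C_{\frac1p-1}(X)$ together with the bound $\|f\|_{\mathcal C_{\frac1p-1}}\le C\|f\|_{\cmo^p}$.

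For this, let $f\in\cmo^p(X)$. By Theorem~C the map $\ell_f(g)=\langle f,g\rangle$, defined initially on $\GG(\beta,\gamma)$, extends to a bounded linear functional on $H^p(X)$ with $\|\ell_f\|\approx\|f\|_{\cmo^p}$. By Theorem~\ref{thm 1.1}, $H^p(X)=H^p_{cw}(X)$ with equivalent norms, so $\ell_f$ is bounded on $H^p_{cw}(X)$ with norm $\lesssim\|f\|_{\cmo^p}$. The Coifman--Weiss duality theorem (\cite{CW2}, p.~594) identifies $\big(H^p_{cw}(X)\big)'$ with $\mathcal C_{\frac1p-1}(X)$ via the integral pairing against $(p,2)$-atoms; hence there is a function $h\in\mathcal C_{\frac1p-1}(X)$ with $\|h\|_{\mathcal C_{\frac1p-1}}\lesssim\|f\|_{\cmo^p}$ representing $\ell_f$, that is, $\ell_f(g)=\int_X h\,g\,d\mu$ for every finite linear combination $g$ of $(p,2)$-atoms.

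It then remains to check that $f$ and $h$ coincide as elements of $(\GG(\beta,\gamma))'$. Since $\GG(\beta,\gamma)$ is dense in $H^p(X)$ (used already in Section~4) and its elements, the wavelets $\psi_\alpha^k$ in particular, lie in $L^2(X)\cap H^p(X)$, each such $g$ has an atomic decomposition converging simultaneously in $L^2(X)$ and in $H^p(X)$ by Theorem~\ref{thm 1.1}; approximating $g$ in $H^p(X)$ by the finite atomic partial sums and using continuity of $\ell_f$ on $H^p(X)$ together with absolute convergence of $\int_X h\,g\,d\mu$, one gets $\langle f,g\rangle=\ell_f(g)=\int_X h\,g\,d\mu=\langle h,g\rangle$ for all $g\in\GG(\beta,\gamma)$. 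In particular $f$ and $h$ have identical wavelet coefficients $\langle\psi_\alpha^k,f\rangle=\langle\psi_\alpha^k,h\rangle$, so $f-h$ has $\mathcal C_p(f-h)=0$; as the only distributions with all wavelet coefficients vanishing are the constants (which have zero $\cmo^p$ and zero $\mathcal C_{\frac1p-1}$ seminorm thanks to the cancellation $\int\psi_\alpha^k\,d\mu=0$), we conclude $f=h$ modulo constants. Thus $f\in\mathcal C_{\frac1p-1}(X)$ with $\|f\|_{\mathcal C_{\frac1p-1}}\lesssim\|f\|_{\cmo^p}$, and combining with Lemma~\ref{lipcmop6} yields the asserted equality with equivalent norms.

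The step I expect to be the main obstacle is exactly the compatibility of the three pairings in the last paragraph: one must verify carefully that the abstract functional furnished by Coifman--Weiss duality, the $\cmo^p$--$H^p$ pairing of Theorem~C, and the honest integral $\int_X h\,g\,d\mu$ all agree on test functions, for only then can equality of wavelet coefficients be read off and the two distribution spaces be identified set-theoretically (not merely isomorphically). This is a routine but slightly delicate limiting argument, resting on the density of $\GG(\beta,\gamma)$ in $H^p(X)$ and on the simultaneous $L^2$- and $H^p$-convergence of atomic decompositions provided by Theorem~\ref{thm 1.1}; if one prefers to avoid it, the inclusion $\cmo^p(X)\subset\mathcal C_{\frac1p-1}(X)$ can instead be proved directly by estimating $\big(\frac{1}{\mu(B)}\int_B|f-f_B|^2\,d\mu\big)^{1/2}$ through the wavelet reproducing formula, splitting the sum into terms local to $B$ and terms of coarse scale away from $B$, much as in the proof of Lemma~\ref{lipcmop6}.
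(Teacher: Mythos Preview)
Your approach is correct and yields the same conclusion, but it takes a genuinely different route from the paper. The paper proves the inclusion $\cmo^p(X)\subset\mathcal C_{\frac1p-1}(X)$ by a direct, self-contained argument modelled on the classical Fefferman--Stein proof: for each fixed dyadic cube $Q$, the functional $\mathcal L_f$ is restricted to the Hilbert space $L^2_{Q,0}$ of mean-zero $L^2$ functions supported in $Q$ (each such function is a multiple of a $(p,2)$-atom, so $\|g\|_{H^p}\le\mu(Q)^{1/p-1/2}\|g\|_{L^2}$), and the Riesz representation theorem produces an $F^Q\in L^2_{Q,0}$ with $\|F^Q\|_{L^2}\le C\mu(Q)^{1/p-1/2}\|f\|_{\cmo^p}$; gluing these local representatives shows $f$ is locally square integrable and satisfies the Campanato condition. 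Your argument instead invokes the global Coifman--Weiss duality $(H^p_{cw})'=\mathcal C_{\frac1p-1}$ as a black box, combined with Theorem~\ref{thm 1.1}, to obtain a single global representative $h$, and then identifies $f$ with $h$ through wavelet coefficients. Your route is more conceptual and shorter once the external duality is granted, but it imports a nontrivial result from \cite{CW2} and, as you note, requires a careful compatibility check between the three pairings; the paper's route is more elementary and keeps everything internal to the machinery already built, at the cost of a local-to-global patching step (which, incidentally, has its own implicit identification issue---showing $f=F^Q+c_Q$ as distributions---analogous to the one you flag).
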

\begin{proof}
By Lemma \ref{lipcmop6}, we only need to show that there exists a constant $C$ such that for $\frac{\omega}{\omega+\eta}<p\leq 1,$
$$\|f\|_{\mathcal C_{\frac{1}{p}-1}}\leq C\|f\|_{\cmo^p}.$$
The basic idea to verify the above estimate is to apply the duality argument in Theorem C. To this end, for given $f\in \cmo^p(X), \frac{\omega}{\omega+\eta}<p\leq 1,$ by Theorem C, we can define a linear functional on $H^p(X)$ by $\mathcal{L}_f(g)=\langle f, g\rangle$ for $g\in H^p(X).$ Now fix a quasi-dyadic ball $Q$ and let $L^2_Q$ denote the space of all square integrable functions supported on $Q$. Let $L^2_{Q,0}$ denote its closed subspaces of functions with integral zero.  Note that each $g\in L^2_{Q,0}$ is a multiple of an $(p,2)$ atom for $H^p(X)$ and that $\|g\|_{H^p(X)}\leq \mu(Q)^{\frac{1}{p}-\frac{1}{2}}\|g\|_{L^2}$. Therefore, this linear functional $\mathcal L_f$ on $H^p(X)$ can extend to a linear functional on $L^2_{Q,0}$ with norm at most $C\mu(Q)^{\frac{1}{p}-\frac{1}{2}}\|f\|_{\cmo^p}$. By the Riesz representation theorem for Hilbert spaces $L^2_{Q,0}$, there exists an element $F^Q\in L^2_{Q,0}$ such that
$$\mathcal{L}_f(g)=\langle f, g\rangle=\int _Q F^Q(x)g(x)d\mu(x), \ \ \ \ \mathrm{ if} \ g\in L^2_{Q,0},$$
with
$$\|F^Q\|_{L^2_{Q}}=\Big\{\int _Q |F^Q(x)|^2d\mu(x)\Big\}^{\frac{1}{2}}\leq C\mu(Q)^{\frac{1}{p}-\frac{1}{2}}\|f\|_{\cmo^p}.$$
Thus $f$ must be a square integrable function on $Q$ and for each quasi ball $Q,$ we have such a function $F^Q$ such that on each ball $Q,$ $f$ differ from $F^Q$ by a constant. This implies that $f$ is a locally square integrable function on $X$ and on each quasi ball $Q$ there exists a constant $c_Q$ such that $f=F^Q+c_Q.$ Observe that
 \begin{eqnarray*}
 \bigg\{{1\over\mu(Q)^{{2\over p} -1}} \int_{Q}|f(x)-c_Q|^2d\mu(x)\bigg\}^{1/2}&=&\bigg\{{1\over\mu(Q)^{{2\over p} -1}} \int_{Q}|F^{Q}(x)|^2d\mu(x)\bigg\}^{1/2}\\
 &\leq& C {1\over\mu(Q)^{{1\over p} -\frac{1}{2}}}\mu(Q)^{\frac{1}{p}-\frac{1}{2}}\|f\|_{\cmo^p}\\
 &=&C\|f\|_{\cmo^p}.
 \end{eqnarray*}
Note that on each quasi ball $Q$ and for any constant $c, \int_Q|f(x)-f_Q|^2d\mu(x)\leq C\int_Q|f(x)-c|^2d\mu(x).$ The above estimate yields that
$$\bigg\{{1\over\mu(Q)^{{2\over p} -1}} \int_{Q}|f(x)-f_Q|^2d\mu(x)\bigg\}^{1/2}\leq C\|f\|_{\cmo^p},$$
which implies that $f\in {\mathcal L}_{\frac{1}{p}-1}(X)$ and
$$\|f\|_{{\mathcal L}_{\frac{1}{p}-1}(X)}\leq C\|f\|_{\cmo^p}.$$
\end{proof}


\bigskip

\noindent School  of Mathematic Sciences, South China Normal University, Guangzhou, 510631, P.R. China.

\noindent {\it E-mail address}:
\texttt{20051017@m.scnu.edu.cn}

\medskip

\noindent Department of Mathematics, Auburn University, AL
36849-5310, USA.

\noindent {\it E-mail address}: \texttt{hanyong@auburn.edu}

\medskip

\noindent Department of Mathematics, Macquarie University, NSW, 2109, Australia.

\noindent {\it E-mail address}: \texttt{ji.li@mq.edu.au}

\end{document}